\documentclass[12pt]{article}
\usepackage[margin=35mm]{geometry}
\usepackage{amsmath}
\usepackage{amsfonts}
\usepackage{amssymb}
\usepackage{amsthm}
\usepackage{graphicx}
\usepackage{verbatim}
\usepackage{tikz}
\immediate\write18{texcount -tex -sum  \jobname.tex > \jobname.wordcount.tex}

\newtheorem{theorem}{Theorem}
\newtheorem{lemma}{Lemma}

\newtheorem{corollary}{Corollary}

\newtheorem{question}{Question}

\providecommand{\keywords}[1]
{
  \small	
  \textbf{\textit{Keywords---}} #1
}

\title{On the minimum number of maximal distance-$k$ independent sets in trees}
\author{Dmitrii Taletskii$^{1}$\\
\small
$^1$ National Research University Higher School of Economics, \\
\small
Bolshaya Pechyorskaya ul. 25/12, Nizhny Novgorod, 603155 Russia\\%
    \small
    e-mail: dmitalmail@gmail.com
}
\date{} 

\begin{document}
\maketitle

\begin{abstract}
A vertex subset of a graph is called a \textit{distance-$k$ independent set} if the distance between any two of its distinct vertices is at least $k + 1$. For all $n,k \geq 1$, we determine the minimum possible number of inclusion-wise maximal distance-$k$ independent sets among all $n$-vertex trees. It equals~$n$ if $n \leq k + 1$, and $n - \bigg\lfloor \frac{n - (k \bmod 2)}{\lfloor k/2 \rfloor + 1} \bigg\rfloor + 1$ otherwise. We also completely describe the class of trees attaining this bound and determine the growth rate of the number of such $n$-vertex trees for a fixed $k \geq 1$. If $k$ is odd and $(k+1)/2$ does not divide $n-1$, then the number of non-isomorphic $n$-vertex trees with the minimum possible number of maximal distance-$k$ independent sets grows linearly with~$n$. Otherwise, it is bounded above by the number of unlabeled \mbox{$k^2$-vertex} trees.
\end{abstract} 

\keywords{maximal independent set, distance-$k$ independent set,  enumerative combinatorics, tree}

\section{Introduction}

Let $G$ be a simple graph with vertex set $V(G)$ and edge set $E(G)$. A set $I \subseteq V(G)$ is \textit{independent} if no two vertices in $I$ are adjacent. An independent set is a \textit{maximal independent set} (MIS) if it is inclusion-wise maximal. A set $J \subseteq V(G)$ is a \textit{distance-$k$ independent set} ($k$-DIS) if the distance between any two distinct vertices in $J$ is at least $k + 1$. An inclusion-wise maximal $k$-DIS is called a maximal distance-$k$ independent set ($k$-MDIS). Note that a (maximal) distance-$1$  independent set is just a (maximal) independent set.
 
The maximum possible number of MISs in the class of $n$-vertex graphs was obtained by Miller and Muller~\cite{MM60} and, independently, by Moon and Moser~\cite{MM65}. Since then, maximum values have also been determined for various graph classes, such as connected graphs, (connected) triangle-free graphs, (connected) unicyclic graphs, trees and forests~\cite{F87}--\cite{CJ97}. While the sharp upper bounds typically grow exponentially with the number of vertices, the sharp lower bounds are constant for many graph classes. This is because \textit{twin vertices} (that is, two vertices with the same open neighborhood) do not increase the total number of MISs. For example, an $n$-vertex star graph has exactly two MISs for all $n \geq 2$. However, the sharp lower bounds are nontrivial for various classes of twin-free graphs. In~\cite{TM18}, an exponential lower bound for the class of $n$-vertex twin-free trees was established. In~\cite{CW24}, a logarithmic lower bound for the class of all twin-free graphs and a linear lower bound for the class of bipartite twin-free graphs were obtained.

Every $n$-vertex tree has at least $n + 1$ $k$-DISs, and this bound is sharp for all trees of diameter at most $k$. In~\cite{T23}, for all $1 < k < d < n$, the trees $T_{k,d,n}$ with the minimum possible number of $k$-DISs among all $n$-vertex trees of diameter~$d$ were described. These trees are constructed from the path $P_{d+1}$ by connecting $n - d - 1$ leaves to a $k$-th vertex if $d > 2k - 2$, or to a central vertex otherwise. In contrast, the trees with the minimum possible number of $1$-DISs among all $n$-vertex trees of diameter~$d$ were described only for $d \leq 7$, and their structure is much more complex~\cite{FPSV13, T21}.

To date, there are relatively few results on enumerating $k$-MDISs in various graph classes. In~\cite{E13}, the number of $k$-MDISs was calculated for the class of grid graphs. In~\cite{BBMB24}, an efficient distributed algorithm was proposed to enumerate $2$-MDISs. Also, there are several complexity results for the problem of finding a $k$-MDIS with maximum cardinality~\cite{EGM14}. 

For $n \leq k + 1$, every $n$-vertex tree has exactly $n$ $k$-MDISs. In this paper, we show that, if $1 \leq k \leq n - 2$, then the sharp lower bound on the number of $k$-MDISs for the class of $n$-vertex trees equals

$$n - \bigg\lfloor \frac{n - (k \bmod 2)}{\lfloor k/2 \rfloor + 1} \bigg\rfloor + 1.$$

For all $k \geq 1$, we also provide a complete characterization of the trees that attain this lower bound (which we refer to as \textit{$k$-minimal trees}). It turns out that if $k$ is odd and $(k+1)/2$ does not divide $n - 1$, then the number of $k$-minimal $n$-vertex trees is~$\Theta(n)$. Otherwise, it is bounded by a constant that depends only on~$k$. Furthermore, for $n \geq 4$, there is a unique $k$-minimal $n$-vertex tree if and only if $n \geq k + 2$ and $\lfloor k/2 \rfloor + 1$ divides $n - (k \bmod 2)$. As a simple corollary, we show that for all $k \geq 2$ and $n \geq 5$, the sharp lower bound on the number of $k$-MDISs in the class of isolate-free $n$-vertex forests is identical to the bound for trees. We also give a few remarks on the lower bounds for arbitrary and bipartite isolate-free graphs.

\section{Terminology}

Let $G$ be a graph and let $v \in V(G)$. For every $s \geq 1$, let $N_s[v]$ and $N_s(v)$ denote the sets of vertices at distance at most $s$ and exactly $s$ from $v$, respectively. A vertex $u$ is \textit{$k$-covered} by a vertex subset $J$ if there exists a vertex $v \in J$ such that $u \in N_k[v]$. If the parameter $k$ is clear from context, we use the term \textit{covered}. We write $G_1 \subseteq G_2$ if $G_1$ is a subgraph of $G_2$, and $G_1 \nsubseteq G_2$ otherwise. Let $[n] = \{1,\dots,n\}$.

A vertex of a tree is a \textit{leaf} if it has degree 1 and is a \textit{branching vertex} if it has degree at least 3. A leaf is  \textit{diametral} or \textit{central} if it has maximum or minimum eccentricity among all leaves, respectively. An inclusion-wise maximal subtree of a tree that does not contain its central vertices is called a \textit{main subtree}. An inclusion-wise maximal path of a tree that contains a leaf but no branching vertices is called a \textit{pendant path}.
 
Let $S_n$ and $P_n$ denote an $n$-vertex star and a simple $n$-vertex path (or \textit{$n$-path}), respectively. Let $S_{n,m}$ be an $(nm+1)$-vertex tree obtained from the forest $nP_m$ by adding a new vertex and connecting it to one leaf of each $P_m$. Note that $S_{n,1} \cong S_{n+1}$ and $S_{2,m} \cong P_{2m + 1}$. Let $S'_{n,m}$ denote the $(mn)$-vertex subtree of $S_{n,m}$ obtained by removing an arbitrary leaf. Let $B_{p_1,p_2,s}$ be the tree of diameter $2s + 1$ with $(p_1 + p_2) \cdot s + 2$ vertices, such that its two central vertices are connected to $p_1$ and $p_2$ simple $s$-paths, respectively, where $p_1,p_2 \geq 1$. Let $\mathcal{B}_{p,s}$ be the family of all trees $B_{p_1,p_2,s}$ such that $p_1 + p_2 = p$ (see the family $\mathcal{B}_{5,2}$ in Fig.~1).

\begin{figure}[h!]
\center
\begin{tikzpicture}
  [scale=1,auto=left,every node/.style={circle}]

\node[fill = black,inner sep=2.5pt] (a1) at (0,0.5) {};
\node[fill = black,inner sep=2.5pt] (a2) at (1,0.5) {};
\node[fill = black,inner sep=2.5pt] (a3) at (0,1.5) {};
\node[fill = black,inner sep=2.5pt] (a4) at (1,1.5) {};
\node[fill = black,inner sep=2.5pt] (a5) at (0,-0.5) {};
\node[fill = black,inner sep=2.5pt] (a6) at (1,-0.5) {};
\node[fill = black,inner sep=2.5pt] (a7) at (0,-1.5) {};
\node[fill = black,inner sep=2.5pt] (a8) at (1,-1.5) {};
\node[fill = black,inner sep=2.5pt] (b1) at (2,0) {};
\node[fill = black,inner sep=2.5pt] (b2) at (3,0) {};
\node[fill = black,inner sep=2.5pt] (c1) at (4,0) {};
\node[fill = black,inner sep=2.5pt] (c2) at (5,0) {};
\node[fill = black,inner sep=2.5pt] (qa1) at (8,0) {};
\node[fill = black,inner sep=2.5pt] (qa2) at (9,0) {};
\node[fill = black,inner sep=2.5pt] (qa3) at (8,1) {};
\node[fill = black,inner sep=2.5pt] (qa4) at (9,1) {};
\node[fill = black,inner sep=2.5pt] (qa5) at (8,-1) {};
\node[fill = black,inner sep=2.5pt] (qa6) at (9,-1) {};
\node[fill = black,inner sep=2.5pt] (qb1) at (10,0) {};
\node[fill = black,inner sep=2.5pt] (qb2) at (11,0) {};
\node[fill = black,inner sep=2.5pt] (qc1) at (12,-0.75) {};
\node[fill = black,inner sep=2.5pt] (qc2) at (13,-0.75) {};
\node[fill = black,inner sep=2.5pt] (qc3) at (12,0.75) {};
\node[fill = black,inner sep=2.5pt] (qc4) at (13,0.75) {};

\node[fill = none,inner sep=2.5pt] (cap1) at (10.5,-2.5) {$B_{3,2,2}$};

\node[fill = none,inner sep=1.5pt] (cap2) at (2.5,-2.5) {$B_{4,1,2}$};

\foreach \from/\to in {a1/a2,a3/a4,a5/a6,a7/a8,a8/b1,a2/b1,a4/b1,a6/b1,b1/b2,b2/c1,c1/c2,qa1/qa2,qa3/qa4,qa5/qa6,qa2/qb1,qa4/qb1,qa6/qb1,qb1/qb2,qb2/qc1,qb2/qc3,qc1/qc2,qc3/qc4}
    \draw[thick] (\from) -- (\to);

\end{tikzpicture}
\caption{\small  The family $\mathcal{B}_{5,2}$.}
\end{figure}
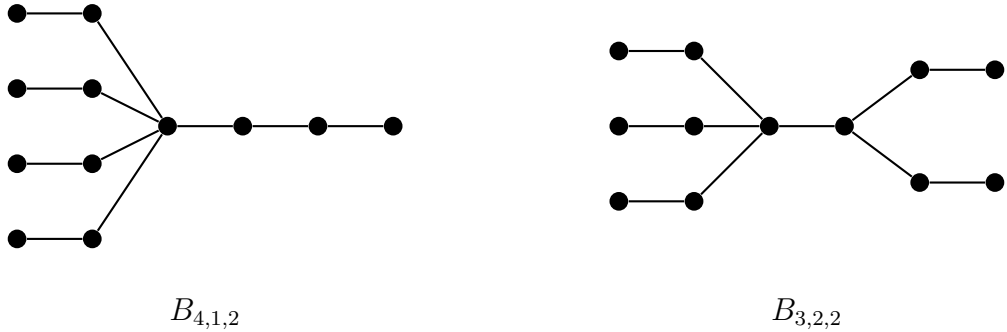

Let $T$ be a tree and let $k \geq 2$. A vertex $u \in V(T)$ is a \textit{$k$-twin} if there exists a vertex $v \in V(T) \setminus \{u\}$ such that $N_k[u] = N_k[v]$. A pair of vertices $(x,y)$ is \textit{$k$-special} if $x$ and $y$ are diametral leaves that are both $k$-twins, but $N_k[x] \neq N_k[y]$. A vertex is called a \textit{diametral $k$-twin} if it is a diametral leaf and a $k$-twin.

Let $T$ be an $n$-vertex tree with no diametral $k$-twins for some $k \geq 2$, and let $r \geq 0$. Let $\mathrm{Add}_k(T,r)$ denote the family of all $(n + r)$-vertex trees of diameter $\mathrm{diam}(T)$ that contain $T$ as a subgraph and do not contain diametral $k$-twins. Similarly, let $\mathrm{Add}^*_k(T,r)$ denote the family of all $(n + r)$-vertex trees of diameter $\mathrm{diam}(T)$ that contain $T$ as a subgraph and do not contain $k$-special pairs. In other words, $\mathrm{Add}_k(T,r)$ ($\mathrm{Add}^*_k(T,r)$) is the family of trees obtained by adding $r$ new vertices to $T$ in a way that preserves the original diameter and ensures the resulting tree does not contain diametral $k$-twins ($k$-special pairs). Note that $\mathrm{Add}_k(T,r) \subseteq \mathrm{Add}^*_k(T,r)$ by definition. For a class of trees $\mathcal{T}$, let $\mathrm{Add}_k(\mathcal{T},r) = \bigcup_{T \in \mathcal{T}}\mathrm{Add}_k(T,r)$. The family $\mathrm{Add}^*_2(S'_{3,2},1) = \{T_1, T_2, T_3\}$ is depicted in Fig.~2. Observe that $T_3$ contains diametral $2$-twins; hence, it does not belong to the family $\mathrm{Add}_2(S'_{3,2},1) = \{T_1, T_2\}$.

\begin{figure}[h!]
\center
\begin{tikzpicture}
  [scale=1,auto=left,every node/.style={circle}]

\node[fill = black,inner sep=2.5pt] (a0) at (1,1) {};
\node[fill = black,inner sep=2.5pt] (a1) at (0,0) {};
\node[fill = black,inner sep=2.5pt] (a2) at (0,-1) {};
\node[fill = black,inner sep=2.5pt] (a3) at (1,0) {};
\node[fill = black,inner sep=2.5pt] (a4) at (1,-1) {};
\node[fill = black,inner sep=2.5pt] (a5) at (2,0) {};
\node[fill = gray,inner sep=1.75pt] (a6) at (2,-1) {};

\node[fill = black,inner sep=2.5pt] (b0) at (5.5,1) {};
\node[fill = black,inner sep=2.5pt] (b1) at (4.5,0) {};
\node[fill = black,inner sep=2.5pt] (b2) at (4.5,-1) {};
\node[fill = black,inner sep=2.5pt] (b3) at (5.5,0) {};
\node[fill = black,inner sep=2.5pt] (b4) at (5.5,-1) {};
\node[fill = gray,inner sep=1.75pt] (b5) at (6.5,1) {};
\node[fill = black,inner sep=2.5pt] (b6) at (6.5,0) {};

\node[fill = black,inner sep=2.5pt] (c0) at (10,1) {};
\node[fill = black,inner sep=2.5pt] (c1) at (9,0) {};
\node[fill = black,inner sep=2.5pt] (c2) at (9,-1) {};
\node[fill = black,inner sep=2.5pt] (c3) at (10,0) {};
\node[fill = black,inner sep=2.5pt] (c4) at (10,-1) {};
\node[fill = gray,inner sep=1.75pt] (c5) at (11,-1) {};
\node[fill = black,inner sep=2.5pt] (c6) at (11,0) {};

\node[fill = none, inner sep=0.5pt] (cap3) at (1,-2) {\small{$T_1$}};
\node[fill = none, inner sep=0.5pt] (cap3) at (5.5,-2) {\small{$T_2$}};
\node[fill = none, inner sep=0.5pt] (cap3) at (10,-2) {\small{$T_3$}};

\foreach \from/\to in {
a0/a1,a1/a2,a0/a3,a3/a4,a0/a5,a5/a6, b0/b1,b1/b2,b0/b3,b3/b4,b0/b5,b0/b6,c0/c1,c1/c2,c0/c3,c3/c4,c3/c5,c0/c6}
    \draw[thick] (\from) -- (\to);

\end{tikzpicture}
\caption{\small  The family $\mathrm{Add}^*_2(S'_{3,2},1)$.}

\end{figure}

Let $T$ be a tree with a leaf $\ell$. Let $\mathrm{mdi}_k(T)$ denote the total number of $k$-MDISs of~$T$. Let $\mathrm{mdi}_k^*(T,\ell)$ denote the number of $k$-MDISs $J$ of $T$ with the following property: $\ell \in J$ and there exists a vertex $w \in V(T) \setminus J$ such that $N_k[w] \cap J = \{\ell\}$. Let $\mathcal{M}_k(T)$ denote the family of all $k$-MDISs of $T$.

Let $k \geq 2$ and $m \geq 1$. If a tree $T'$ can be obtained from a tree $T$ by deleting a $k$-twin leaf of $T$, we write $T \succ_k T'$. A tree $T$ is called \textit{$(k,m)$-reducible} if there exist trees $T_1,\dots, T_m$ such that $T \succ_k T_1 \succ_k \cdots \succ_k T_m$. If the parameter $k$ is clear from context, we use the terms \textit{$m$-reducible} and \textit{reducible} instead of \textit{$(k,m)$-reducible} and \textit{$(k,1)$-reducible}, respectively. 

Let $t(m)$ denote the number of unlabeled $m$-vertex trees. 

\section{Preliminaries}

For a fixed $k \geq 1$, define the function $f_k(n)$ as follows: if $n \leq k + 1$, let $f_k(n) = n$; if $n \geq k + 2$, let $$f_k(n) = n - \bigg\lfloor \frac{n - (k \bmod 2)}{\lfloor k/2 \rfloor + 1} \bigg\rfloor + 1.$$ Our goal is to prove that $f_k(n)$ is the minimum possible number of $k$-MDISs among all $n$-vertex trees. This is trivially true for $k = 1$; in what follows, we assume that $k \geq 2$. The next simple observation is used frequently later.

\begin{lemma}\label{lem1}
Let $k \geq 2$ and $n \geq k + 2$. The following hold:

(i) The equality $f_k(n) = f_k(n-1)$ holds if and only if $\lfloor k/2 \rfloor + 1$ divides $n - (k \bmod 2)$. Otherwise, $f_k(n) = f_k(n-1) + 1$;

(ii) $f_k(n + \lfloor k/2 \rfloor + 1) = f_k(n) + \lfloor k/2 \rfloor$.
\end{lemma}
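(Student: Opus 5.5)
Write $q = \lfloor k/2\rfloor + 1 \geq 2$ and $\epsilon = k \bmod 2$. For $m \geq k+2$ we then have $f_k(m) = m - \lfloor (m-\epsilon)/q \rfloor + 1$. Both parts follow from direct computation with this formula. The only point needing care is the boundary $n = k+2$ in part (i), where $f_k(n-1) = f_k(k+1)$ is given by the other branch, namely $k+1$.

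For (i), first take $n \geq k+3$, so that both $n$ and $n-1$ use the second branch. Then
$$f_k(n) - f_k(n-1) = 1 - \left(\Big\lfloor \frac{n-\epsilon}{q}\Big\rfloor - \Big\lfloor \frac{n-1-\epsilon}{q}\Big\rfloor\right).$$
The bracketed difference of floors of consecutive integers divided by $q$ equals $1$ exactly when $q$ divides $n-\epsilon$, and $0$ otherwise. This gives the dichotomy. Next take $n = k+2$. I will compute $\lfloor (k+2-\epsilon)/q \rfloor$ by parity.
\begin{itemize}
\item If $k = 2t$, then $q = t+1$ and $(k+2)/q = 2$ exactly. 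So $f_k(k+2) = k+1 = f_k(k+1)$, and indeed $q$ divides $n - \epsilon = 2t+2$.
\item If $k = 2t+1$, then $q = t+1$ and $(k+2-1)/q = 2$ exactly. So again $f_k(k+2) = k+1 = f_k(k+1)$, and $q$ divides $n-\epsilon$.
\end{itemize}
Hence the boundary case also fits the statement: there the equality case holds, and $q \mid n-\epsilon$ is true. The "otherwise" clause is vacuous at $n = k+2$.

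For (ii), both $n$ and $n+q$ are at least $k+2$. Therefore
$$f_k(n+q) = n+q - \Big\lfloor \frac{n-\epsilon}{q} + 1\Big\rfloor + 1 = f_k(n) + q - 1 = f_k(n) + \lfloor k/2 \rfloor.$$

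No real obstacle is expected. The one step to watch is the $n = k+2$ boundary in (i), handled by the parity check above, which shows that $q = \lfloor k/2 \rfloor + 1$ always divides $k+2-\epsilon$.
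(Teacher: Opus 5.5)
Your proof is correct, and it matches the paper's approach. The paper states this lemma as a simple observation without proof, and your direct computation from the closed formula is exactly the intended verification. This includes your check that at $n = k+2$ the equality case holds and $\lfloor k/2\rfloor + 1$ divides $k+2-(k \bmod 2)$, so the boundary with the branch $f_k(k+1) = k+1$ causes no trouble.
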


\begin{lemma}\label{lem2}
For all $k,p \geq 2$, the following hold:

(i) $\mathrm{mdi}_k(S'_{p,\lfloor k/2 \rfloor+1}) = f_k(p \cdot (\lfloor k/2 \rfloor+1))$; 

(ii) $\mathrm{mdi}_k(S_{p,\lfloor k/2 \rfloor+1}) = f_k(p \cdot (\lfloor k/2 \rfloor+1) + 1)$;

(iii) If $T \in \mathcal{B}_{p,\lfloor k/2 \rfloor+1}$, then  $\mathrm{mdi}_k(T) \geq f_k(p \cdot (\lfloor k/2 \rfloor + 1) + 2)$, with equality if and only if~$k$ is odd.

\end{lemma}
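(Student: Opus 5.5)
The plan is to compute $\mathrm{mdi}_k$ of each tree directly, by classifying its $k$-MDISs according to which depths they use along the legs, and then to compare the counts with closed forms for $f_k$. Throughout let $s=\lfloor k/2\rfloor+1$, so $k=2s-2$ for even $k$ and $k=2s-1$ for odd $k$. Since $ps-(k\bmod 2)$ lies in $[(p-1)s,ps]$, the definition of $f_k$ gives
\[
f_k(ps)=ps-p+1+(k\bmod 2),\qquad f_k(ps+1)=ps-p+2,\qquad f_k(ps+2)=ps-p+3,
\]
where the last two hold for both parities (they also follow from Lemma~\ref{lem1}(i) applied step by step). So it suffices to show:
\begin{itemize}
\item $\mathrm{mdi}_k(S'_{p,s})=ps-p+1+(k\bmod 2)$;
\item $\mathrm{mdi}_k(S_{p,s})=ps-p+2$;
\item $\mathrm{mdi}_k(T)=ps-p+3$ for $T\in\mathcal{B}_{p,s}$ when $k$ is odd, and $\mathrm{mdi}_k(T)>ps-p+3$ when $k$ is even.
\end{itemize}

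For $S_{p,s}$, let $c$ be the center and label each vertex of a leg by its depth $i\in\{1,\dots,s\}$.
\begin{itemize}
\item Two vertices on the same leg are at distance at most $s-1\le k$, so a $k$-DIS meets each leg at most once.
\item Two vertices on different legs at depths $i,j$ are at distance $i+j$, so they are compatible iff $i+j\ge k+1$.
\item $c$ is within distance $s\le k$ of every vertex, so $\{c\}$ is one MDIS, and every other MDIS avoids $c$.
\end{itemize}
Next I classify an MDIS $J\neq\{c\}$ by its minimum depth $d$, attained on some leg.
\begin{itemize}
\item If $d$ is small enough that $N_k[v]$ is all of $V$ for that vertex $v$ (that is, $d+s\le k$), then $J=\{v\}$. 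Each such $v$ gives one MDIS.
\item For larger $d$, the other chosen vertices must have depth at least $k+1-d$. Maximality forces every leg to be used as soon as some vertex on it is uncovered. Since $k+1\ge 2s-1$, this leaves essentially two patterns: a single vertex at depth $s-1$ or $s$ together with leaves on all other legs, or all leaves.
\end{itemize}
Summing these contributions should give $ps-p+2$ for both parities. The parity enters only through which of the borderline depths $s-1,s$ are compatible, since $2s-1$ equals $k$ when $k$ is odd and $k+1$ when $k$ is even.

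For $S'_{p,s}$ I repeat the count with one leg shortened to length $s-1$. The difference from $S_{p,s}$ is concentrated in the sets that used the removed leaf. When $k$ is even this loses exactly one MDIS relative to $S_{p,s}$, and when $k$ is odd it loses none. To make this precise I will use an injection/bijection argument: each MDIS of $S_{p,s}$ containing the deleted leaf $\ell$ is sent to the set obtained by replacing $\ell$ by its neighbor, or by dropping it. I then check when this image is still maximal or coincides with an existing MDIS.

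For $B_{p_1,p_2,s}$, with adjacent centers $c_1,c_2$, the same depth bookkeeping applies. Now distances across the central edge are $i+j+1$ between legs on opposite sides and $i+j$ between legs on the same side, and both $\{c_1\}$ and $\{c_2\}$ need examination, since $c_i$ is at distance $s+1$ from the far leaves. When $k=2s-1$ we have $s+1\le k$ (for $s\ge2$), so $\{c_1\}$ and $\{c_2\}$ are both MDISs, and the count should come out to exactly $ps-p+3$. When $k=2s-2$, singletons near a center fail to cover the far leaves, which creates additional MDISs of the form (near-center vertex) plus (far leaves). These extras should push the count strictly above $ps-p+3$; the case $s=2$, $k=2$ has to be checked directly.

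I expect the main obstacle to be the maximality verification in the borderline depth cases, namely depths $s-1$ and $s$ on one leg against leaves on the others. It is exactly there that the parity of $k$ changes the answer, and that the asymmetry between $p_1$ and $p_2$ in $\mathcal{B}_{p,s}$ could in principle matter. I will handle this by listing, for each candidate set, the uncovered vertex that witnesses non-maximality, and then tabulating separately for $k$ even and odd.
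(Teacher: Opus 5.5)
Your method, which classifies the $k$-MDISs by the depths they occupy on the legs and counts them directly, is the paper's method. Your counts for $S_{p,s}$ and $S'_{p,s}$ are correct, including the comparison that $S'$ loses one MDIS for even $k$ and none for odd $k$. So is your count for $\mathcal{B}_{p,s}$ with $k$ odd.

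There is a minor slip in the closed forms. The identity $f_k(ps+2)=ps-p+3$ fails for $k=2$. There $s=2$ divides $ps+2$, so Lemma~\ref{lem1}(i) gives $f_2(2p+2)=f_2(2p+1)=p+2$. This does not hurt you, because $ps-p+3\ge f_k(ps+2)$ in every case, so proving $\mathrm{mdi}_k(T)>ps-p+3$ for even $k$ still suffices. Still, the identity as stated is false.

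The substantive problem is your account of the strict inequality for even $k$ in (iii). The sets (vertex near a center) $\cup$ (far leaves) are not extra.
\begin{itemize}
\item Every non-leaf vertex of $B_{p_1,p_2,s}$ lies in exactly one $k$-MDIS that contains no other non-leaf vertex. Your sets are simply these, replacing singletons one for one.
\item The same kind of set already occurs for odd $k$, at depth $s-1$. There the total is exactly $ps-p+3=(n-p)+1$: one such MDIS per non-leaf vertex, plus the all-leaves set.
\end{itemize}
Counting those sets therefore can never give you anything above $ps-p+3$.

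The actual surplus comes from a different borderline. For $k=2s-2$, two depth-$(s-1)$ vertices on legs attached to \emph{different} centers are at distance $2s-1=k+1$, so they are compatible. Each such pair, together with all leaves on the remaining legs, is a $k$-MDIS containing two non-leaf vertices. This gives $p_1p_2\ge 1$ additional sets, so that
\[
\mathrm{mdi}_k(B_{p_1,p_2,s})=ps-p+3+p_1p_2
\]
for every even $k$. This includes $k=2$, where the two centers play the role of depth $s-2$. For odd $k$ the same pair is at distance $2s-1=k$, so these sets do not exist.

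This pair count is exactly what the paper uses. A complete enumeration along your lines would uncover it, but the mechanism you predicted would not establish strictness.
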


\begin{proof}
We prove only (iii); the other statements follow similarly. Consider a tree $B_{p_1,p_2,\lfloor k/2 \rfloor+1} \in \mathcal{B}_{p,\lfloor k/2 \rfloor+1}$ and let $n = |V(B_{p_1,p_2,\lfloor k/2 \rfloor+1})| = p \cdot (\lfloor k/2 \rfloor+1) + 2$. 

If $k$ is odd, then it is easy to check that for every non-leaf vertex of $B_{p_1,p_2,\lfloor k/2 \rfloor+1}$, there exists a unique $k$-MDIS containing this vertex (and possibly some leaves). Moreover, there exists a unique $k$-MDIS consisting of all leaves of the tree. Therefore, 
$$\mathrm{mdi}_k(B_{p_1,p_2,\lfloor k/2 \rfloor+1}) = n - (p_1 + p_2) + 1 = n - \frac{n - 2}{\lfloor k/2 \rfloor + 1} + 1 = n - \bigg\lfloor \frac{n - 1}{\lfloor k/2 \rfloor + 1} \bigg\rfloor + 1  = f_k(n).$$

If $k$ is even, there exists a unique $k$-MDIS containing all leaves of the tree. Moreover, for every non-leaf vertex, there exists a $k$-MDIS containing this vertex and no other non-leaf vertex. Finally, there exist $p_1 \cdot p_2$ distinct $k$-MDISs containing two non-leaf vertices adjacent to leaves. Therefore, $$\mathrm{mdi}_k(B_{p_1,p_2,\lfloor k/2 \rfloor+1}) >  n - (p_1 + p_2) + 1 = n - \frac{n - 2}{\lfloor k/2 \rfloor + 1}  + 1 \geq n - \bigg\lfloor \frac{n}{\lfloor k/2 \rfloor + 1} \bigg\rfloor + 1 = f_k(n).$$
\end{proof}

\begin{lemma}\label{lem3}
Let $T$ be a tree with a leaf $\ell$. Then, for all $k \geq 1$, the following holds: $$\mathrm{mdi}_k(T) = \mathrm{mdi}_k(T \setminus \ell) + \mathrm{mdi}^*_k(T,\ell).$$
\end{lemma}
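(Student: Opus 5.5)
The plan is to partition $\mathcal{M}_k(T)$ into two classes: those $J$ for which $J\setminus\{\ell\}$ is a $k$-MDIS of $T' = T\setminus \ell$, and the rest. I will show the first class is in bijection with $\mathcal{M}_k(T')$ and the second class is exactly the family counted by $\mathrm{mdi}^*_k(T,\ell)$.

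\emph{Basic facts.} Throughout I use two facts. First, since $\ell$ is a leaf, $T'$ is connected and distances between vertices of $T'$ are the same in $T'$ as in $T$. Hence a subset of $V(T')$ is a $k$-DIS of $T'$ if and only if it is a $k$-DIS of $T$. Second, a $k$-DIS $J$ is maximal if and only if every vertex outside $J$ is $k$-covered by $J$, because a non-covered vertex could be added to $J$.

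\emph{The map $\psi$.} Define $\psi\colon \mathcal{M}_k(T')\to \mathcal{M}_k(T)$ as follows. If $\ell$ is $k$-covered by $J'$, put $\psi(J')=J'$. Otherwise put $\psi(J')=J'\cup\{\ell\}$. I check that $\psi(J')$ is a $k$-DIS of $T$, using the first fact and, in the second case, that $\ell$ is at distance at least $k+1$ from all of $J'$. I check that it is maximal in $T$ by the second fact: every vertex of $T'$ outside $J'$ is covered by $J'$, and $\ell$ is either covered by $J'$ or lies in the set. Since $\psi(J')\setminus\{\ell\}=J'$, the map $\psi$ is injective.

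\emph{The image of $\psi$.} I claim the image is exactly the set of $J\in\mathcal{M}_k(T)$ such that $J\setminus\{\ell\}\in\mathcal{M}_k(T')$.
\begin{itemize}
\item If $\ell\notin J$, then $J\setminus\{\ell\}=J$ is automatically maximal in $T'$. Every $u\in V(T')\setminus J$ is within distance $k$ of some vertex of $J$, and that vertex is not $\ell$. So $J=\psi(J)$, since $\ell\notin J$ is then covered by $J$.
\item If $\ell\in J$ and $J'=J\setminus\{\ell\}$ is maximal in $T'$, then $\ell$ is at distance at least $k+1$ from $J'$. So $\ell$ is not covered by $J'$ and $\psi(J')=J$.
\end{itemize}
It follows that $\mathrm{mdi}_k(T)=\mathrm{mdi}_k(T')+\#\{J\in\mathcal{M}_k(T):\ \ell\in J,\ J\setminus\{\ell\}\notin\mathcal{M}_k(T')\}$.

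\emph{The remaining class.} It remains to identify this last set with the one defined in $\mathrm{mdi}^*_k(T,\ell)$. Take $J$ with $\ell\in J$ and $J\setminus\{\ell\}$ not maximal in $T'$. Then some $w\in V(T')\setminus J$ is not covered by $J\setminus\{\ell\}$. Since $J$ is maximal in $T$, $w$ is covered by $J$, which forces $N_k[w]\cap J=\{\ell\}$. Conversely, take a witness $w\ne\ell$, $w\notin J$, with $N_k[w]\cap J=\{\ell\}$. Then $w$ is at distance at least $k+1$ from $J\setminus\{\ell\}$, so $(J\setminus\{\ell\})\cup\{w\}$ is a larger $k$-DIS of $T'$, and $J\setminus\{\ell\}$ is not maximal. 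Here $w\in V(T')$ is automatic, since $w\neq \ell$.

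I expect no real obstacle. The only points needing care are the distance-preservation fact for removing a leaf and the non-degenerate case $k\ge1$ in which the witness $w$ lies in $T'$.
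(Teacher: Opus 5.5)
Your proposal is correct and follows the paper's proof: you use the same partition of $\mathcal{M}_k(T)$ according to whether $J\setminus\{\ell\}$ is a $k$-MDIS of $T\setminus\ell$, and you identify the second class with the sets counted by $\mathrm{mdi}^*_k(T,\ell)$. You go further than the paper by making explicit two steps it leaves implicit: the bijection behind its ``clearly $|\mathcal{M}^1_k(T)| = \mathrm{mdi}_k(T\setminus\ell)$'', and the converse inclusion showing that every set counted by $\mathrm{mdi}^*_k(T,\ell)$ lies in the second class.
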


\begin{proof}

Let $\mathcal{M}_k(T) = \mathcal{M}^1_k(T) \cup \mathcal{M}^2_k(T)$, where $\mathcal{M}^1_k(T)$ is the family of all $k$-MDISs~$J$ such that $J \setminus \{\ell\} \in \mathcal{M}_k(T \setminus \ell)$. Clearly,  $|\mathcal{M}^1_k(T)| = \mathrm{mdi}_k(T \setminus \ell)$. Furthermore, for every $k$-MDIS $J' \in \mathcal{M}^2_k(T)$, the set $J' \setminus \{\ell\}$ is not inclusion-wise maximal in $T \setminus \ell$. Hence, there exists a vertex $u \in V(T \setminus \ell)$ such that the set $(J' \setminus \{\ell\}) \cup \{u\}$ is distance-$k$ independent in both $T \setminus \ell$ and $T$. Therefore, $N_k[u] \cap J' = \{\ell\}$ and $|\mathcal{M}^2_k(T)| = \mathrm{mdi}^*_k(T,
\ell)$, as required.
\end{proof}

\begin{lemma}\label{lem4}
Let $T$ be a tree with a leaf $u$ such that no leaf of $T$ belongs to $N_{k+1}[u] \setminus \{u\}$. Then, $\mathrm{mdi}_k(T \setminus u) < \mathrm{mdi}_k(T)$.
\end{lemma}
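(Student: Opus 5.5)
The plan is to apply Lemma~\ref{lem3} to the leaf $u$. It gives $\mathrm{mdi}_k(T)=\mathrm{mdi}_k(T\setminus u)+\mathrm{mdi}^*_k(T,u)$, so it suffices to show $\mathrm{mdi}^*_k(T,u)\geq 1$. Concretely, I need a $k$-MDIS $J$ of $T$ with $u\in J$, together with a vertex $w\notin J$ such that $N_k[w]\cap J=\{u\}$. The natural choice of witness is the unique neighbour $w$ of $u$; if $k\ge2$ this is forced to lie outside $J$ anyway. With this choice, the requirement on $J$ is that $J$ contains no vertex other than $u$ at distance at most $k$ from $w$. Every such vertex lies at distance at most $k+1$ from $u$. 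So it suffices to build a $k$-MDIS $J$ with $J\cap N_{k+1}[u]=\{u\}$.

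I would build $J$ in two steps. First let $L=N_{k+1}(u)$, the layer of vertices at distance exactly $k+1$ from $u$. These are the only vertices outside $N_k[u]$ that are not automatically covered by $u$. By hypothesis no vertex of $L$ is a leaf, so each $y\in L$ has a neighbour $y^+$ at distance $k+2$ from $u$. Any vertex at distance $k+2$ from $u$ covers $y$, and it is also at distance $k+2\ge k+1$ from $u$, so it is compatible with $u$. The plan is to pick a $k$-DIS $S\subseteq N_{k+2}(u)$ that covers all of $L$. Then I extend $\{u\}\cup S$ greedily to a maximal $k$-DIS $J$. Every vertex of $N_{k+1}[u]\setminus\{u\}$ is already covered, by $u$ or by $S$, so the greedy extension never adds a vertex of $N_{k+1}[u]$. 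Hence $J\cap N_{k+1}[u]=\{u\}$, as required.

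To construct $S$, I would process $L$ greedily. Choose $y\in L$ not yet covered, add $y^+$ to $S$, and repeat. The point to verify is that a newly chosen $y^+$ is at distance at least $k+1$ from every earlier element $z^+$ of $S$. Suppose instead that $d(y^+,z^+)\le k$. Let the paths from $u$ to $y$ and to $z$ diverge at a vertex $c$ at distance $d$ from $u$. Then $d(y,z^+)=d(y^+,z^+)-1\le k$, so $y$ would already have been covered by $z^+$, a contradiction. This works cleanly when $y^+$ is not a descendant of $z$. In a tree rooted at $u$ it never is, since $y\ne z$ and both lie at depth $k+1$. So the path from $y^+$ to $z^+$ passes through $y$, and $d(y,z^+)=d(y^+,z^+)-1$ holds exactly.

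The main obstacle is this compatibility check in the greedy construction of $S$. One must confirm that covering each layer-$(k+1)$ vertex from one level further out never creates a conflict, and that $S$ stays disjoint from $N_{k+1}[u]$. It is also exactly where the hypothesis that $N_{k+1}[u]\setminus\{u\}$ contains no leaves is used: it guarantees that every $y^+$ exists. If the layer $L$ is empty the argument is trivial. In that case $\{u\}$ extended greedily already avoids $N_{k+1}[u]\setminus\{u\}$, because all of it is covered by $u$. Once $J$ and $w$ are in hand, Lemma~\ref{lem3} yields the strict inequality.
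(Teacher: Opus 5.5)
Your proof is correct and is essentially the paper's argument: the paper starts from $\{u\}$ and greedily covers $N_k(v)$, where $v$ is the neighbour of $u$, by adding neighbours in $N_{k+1}(v)$. Since $u$ is a leaf, $N_k[v]=N_{k+1}[u]$, so these are exactly your layers $L$ and $N_{k+2}(u)$, and the compatibility check (the path between two chosen vertices passes through the parent) is the same.

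Two wording slips are worth fixing. First, it is $y^+$, not an arbitrary vertex of $N_{k+2}(u)$, that covers $y$. Second, $d(y,z^+)=d(y^+,z^+)-1$ holds simply because $z^+\neq y^+$ lies at the same depth, so the $y^+z^+$-path leaves $y^+$ through $y$; the divergence vertex $c$ is never needed.
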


\begin{proof}
Let $v$ be the unique neighbor of $u$. By Lemma~\ref{lem3}, it suffices to show that there exists a set $J \in \mathcal{M}_k(T)$ such that $u \in J$ and $N_k[v] \cap J = \{u\}$. We construct such a set, starting with $J = \{u\}$. If there exists a vertex $x \in N_{k}(v)$ not covered by $J$, we add a neighbor $x' \in N_{k+1}(v)$ to $J$ (since $x$ is not a leaf, such a neighbor exists). Suppose that $x'$ is already covered by $J$. Then, there exists a vertex $y' \in N_{k+1}(v) \cap J$ such that $\mathrm{dist}(x',y') \leq k$. However, the only $x'y'$-path in $T$ passes through $x$; hence, $\mathrm{dist}(x,y') < k$ and $x$ is covered by $J$, a contradiction. Therefore, we can iteratively add vertices from $N_{k+1}(v)$ to $J$ until all vertices in $N_{k}(v)$ become covered. After that, we repeatedly add uncovered vertices from $V(T) \setminus N_k[v]$ to $J$ until it becomes a $k$-MDIS, as required.
\end{proof}

\begin{lemma}\label{lem5}
For all $k \geq 2$, the following hold:

(i) $\mathrm{mdi}_k(P_{k+2}) = k + 1$; 

(ii) $\mathrm{mdi}_k(P_{k+3}) = k + 2$; 

(iii) $\mathrm{mdi}_k(P_{k+4}) = k + 4$;

(iv) $\mathrm{mdi}_k(P_{k+5}) \geq k + 6$.
\end{lemma}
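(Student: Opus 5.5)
The plan is to enumerate the $k$-MDISs of these short paths directly, using only the definition. Label the vertices of $P_{k+j}$ by $1,\dots,k+j$ in path order, so that $\mathrm{dist}(a,b)=|a-b|$. A set is a $k$-DIS exactly when any two of its elements differ by at least $k+1$. In every case I will split the $k$-MDISs by their cardinality and count each part.

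\textbf{Step 1: sets of size at least three.} A $k$-DIS with three elements spans at least $2k+2$ positions, so it needs $k+j\geq 2k+3$, i.e. $k\le j-3$. For $j\le 4$ and $k\ge 2$ this never happens, so every $k$-DIS has at most two elements. Consequently every two-element $k$-DIS is automatically maximal. For $j=5$, a three-element $k$-DIS exists only when $k=2$, and then it is unique, namely $\{1,4,7\}$.

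\textbf{Step 2: two-element sets.} The pairs $\{a,b\}$ with $b-a\ge k+1$ inside $[k+j]$ are counted by the differences $k+1,\dots,k+j-1$. This gives $(j-1)+(j-2)+\dots+1=\binom{j}{2}$ pairs, that is, $1,3,6,10$ for $j=2,3,4,5$.

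\textbf{Step 3: singletons.} A singleton $\{v\}$ is maximal iff every vertex lies within distance $k$ of $v$. This means $v-1\le k$ and $k+j-v\le k$, i.e. $j\le v\le k+1$. So there are $k+2-j$ such singletons when this number is nonnegative, and none otherwise.

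\textbf{Step 4: summing.} For (i), (ii), (iii) we get $\binom{j}{2}+(k+2-j)$, which equals $k+1$, $k+2$ and $k+4$ for $j=2,3,4$ respectively. For (iv) I split on $k$.
\begin{itemize}
\item If $k\ge 3$, there are no triples and all $10$ pairs are maximal. Together with the $k-3$ singletons this gives $k+7\ge k+6$.
\item If $k=2$, there are no singletons. The three pairs contained in $\{1,4,7\}$ are not maximal, so $7$ pairs remain. Adding the single triple gives $8=k+6$.
\end{itemize}

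The only point needing care is the $k=2$ case of (iv). There a three-element $k$-DIS appears, and one must discard the pairs it absorbs. Everything else is routine bookkeeping.
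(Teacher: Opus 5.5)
Your proof is correct, but it takes a different route from the paper's. The paper handles (i) by a direct description: every $k$-MDIS of $P_{k+2}$ is either the pair of leaves or a single non-leaf vertex. It then gets (ii)--(iv) one vertex at a time from Lemma~\ref{lem3}, writing $\mathrm{mdi}_k(P_{k+j}) = \mathrm{mdi}_k(P_{k+j-1}) + \mathrm{mdi}^*_k(P_{k+j},\ell)$ for an end leaf $\ell$. It checks, for example, that $\mathrm{mdi}^*_k(P_{k+3},\ell)=1$, and leaves (iii) and (iv) as ``similar''.

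You instead count every $k$-MDIS at once, sorted by cardinality:
\begin{itemize}
\item the $k+2-j$ maximal singletons;
\item the $\binom{j}{2}$ admissible pairs, all maximal unless a triple exists;
\item the unique triple $\{1,4,7\}$, which appears only for $(k,j)=(2,5)$ and absorbs three pairs.
\end{itemize}

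Your version does not need Lemma~\ref{lem3}. It also gives exact values: $\mathrm{mdi}_k(P_{k+5}) = k+7$ for $k\ge 3$ and $8$ for $k=2$, so (iv) is tight only at $k=2$. The paper's version is shorter and fits its general leaf-deletion method. However, its ``similarly'' steps hide exactly the bookkeeping you make explicit: deciding which $k$-MDISs containing $\ell$ stop being maximal once $\ell$ is deleted. At $k=2$ this includes noticing that $\{1,4\}$ stays maximal in $P_6$.

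One small wording point: the extreme elements of a three-element $k$-DIS are at distance at least $2k+2$, so the set occupies at least $2k+3$ vertices. That is what your inequality $k+j\ge 2k+3$ actually uses.
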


\begin{proof}
Statement (i) follows immediately from the fact that every $k$-MDIS of $P_{k+2}$ contains either both leaves or exactly one non-leaf vertex. To prove (ii), observe that for a leaf $\ell$ of $P_{k + 3}$, we have $\mathrm{mdi}_k^*(P_{k + 3},\ell) = 1$. By Lemma~\ref{lem3}, $\mathrm{mdi}_k(P_{k+3}) = \mathrm{mdi}_k(P_{k+2}) + 1$, as desired. Statements (iii) and (iv) can be proved similarly.
\end{proof}

\begin{lemma}\label{lem6}
Let $k \geq 2$, and let $T$ be a tree. Then, for a $k$-twin $u \in V(T)$, the following hold:

(i) If $u$ is a leaf, then $\mathrm{mdi}_k(T \setminus u) \leq \mathrm{mdi}_k(T) - 1$. Moreover, if $T$ has a unique $k$-MDIS containing $u$, then $\mathrm{mdi}_k(T \setminus u) = \mathrm{mdi}_k(T) - 1$.

(ii) If $u$ is a diametral leaf and $\mathrm{diam}(T) \geq k + 2$, then $\mathrm{mdi}_k(T \setminus u) \leq \mathrm{mdi}_k(T) - 2$.
\end{lemma}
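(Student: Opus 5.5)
By Lemma~\ref{lem3}, it suffices to bound $\mathrm{mdi}^*_k(T,u)$ from below: we need $\mathrm{mdi}^*_k(T,u)\ge 1$ in (i) (with equality under the uniqueness hypothesis) and $\mathrm{mdi}^*_k(T,u)\ge 2$ in (ii). Throughout, let $v\neq u$ be a vertex with $N_k[u]=N_k[v]$, and let $w$ be the unique neighbor of $u$. Since $u$ is a leaf, $N_k[u]=N_{k-1}[w]\cup\{u\}$. The relation $N_k[u]=N_k[v]$ forces $v$ to be at distance at most $k$ from $u$, and no vertex farther out than $N_k[u]$ lies within distance $k$ of $v$.

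For (i), the natural witness is the vertex $v$ itself. Take any $k$-MDIS $J$ of $T$ containing $u$; one exists, since $\{u\}$ extends greedily. Because $J$ is a $k$-DIS containing $u$, we have $J\cap N_k[u]=\{u\}$, hence $J\cap N_k[v]=\{u\}$. Moreover $v\notin J$, since $v\in N_k[u]$. So $J$ satisfies the defining property of $\mathrm{mdi}^*_k(T,u)$ with witness $v$. In fact every $k$-MDIS containing $u$ is counted, so $\mathrm{mdi}^*_k(T,u)$ equals the number of $k$-MDISs of $T$ containing $u$. This gives $\mathrm{mdi}^*_k(T,u)\ge1$, and equality holds exactly when that $k$-MDIS is unique. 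Lemma~\ref{lem3} then yields both claims of (i).

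For (ii), it remains to exhibit two distinct $k$-MDISs of $T$ containing $u$. Let $u=x_0,x_1,\dots,x_d$ be a diametral path starting at $u$, where $d=\mathrm{diam}(T)\ge k+2$. Both $x_{k+1}$ and $x_{k+2}$ lie outside $N_k[u]$, and each extends together with $u$ to a $k$-MDIS. The first set contains $\{u,x_{k+1}\}$. The second contains $\{u,x_{k+2}\}$ and avoids $x_{k+1}$, since $\mathrm{dist}(x_{k+1},x_{k+2})=1\le k$. Hence the two sets are distinct, and both contain $u$.

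The main point to check is that $\{u,x_{k+1}\}$ and $\{u,x_{k+2}\}$ are genuinely distance-$k$ independent, and this is immediate from distances along a path in a tree: $\mathrm{dist}(u,x_{k+1})=k+1$ and $\mathrm{dist}(u,x_{k+2})=k+2$. No subtle case analysis seems needed; diametrality is used only to guarantee that a path of length $k+2$ starts at $u$. Therefore $\mathrm{mdi}^*_k(T,u)\ge2$, and Lemma~\ref{lem3} gives $\mathrm{mdi}_k(T\setminus u)\le\mathrm{mdi}_k(T)-2$.
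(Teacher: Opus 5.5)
Your proposal is correct and follows the paper's proof. Part (i) goes through Lemma~\ref{lem3} with $v$ as the witness for every $k$-MDIS containing $u$, and part (ii) exhibits two distinct $k$-MDISs through $u$, distinguished by two adjacent vertices of a diametral path. The paper uses the far endpoint $w$ of that path and its neighbor $w'$ instead of your $x_{k+1},x_{k+2}$, and it includes a detour showing $v$ is itself a diametral leaf, which the argument does not need. Your version makes explicit that only $\mathrm{ecc}(u)\ge k+2$ is used.
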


\begin{proof}
Let $v \in V(T)$ be a vertex such that $N_k[v] = N_k[u]$. For every $k$-MDIS $J \ni u$, we have $N_k[v] \cap J = \{u\}$, and thus $\mathrm{mdi}_k^*(T,u) \geq 1$. Therefore, statement (i) follows from Lemma~\ref{lem3} and the fact that $\mathrm{mdi}_k^*(T,u)$ is bounded above by the total number of $k$-MDISs containing $u$. 

We now prove (ii). Let $P$ be a diametral path with endvertex $u$, and let $w$ denote its other endvertex. The condition $\mathrm{dist}(u,v) \leq k < \mathrm{diam}(T)$ implies that $v$ does not belong to $P$. Let $x$ be the vertex on $P$ that is farthest from $u$ among those on the $uv$-path. The condition $N_k[u] = N_k[v]$ implies $\mathrm{dist}(u,x) = \mathrm{dist}(v,x)$; hence, $v$ is also a diametral leaf of $T$. Let $w'$ be the unique neighbor of $w$.  There exist distinct $k$-MDISs $J$ and $J'$ such that  $\{u,w\} \subseteq J$ and $\{u,w'\} \subseteq J'$. Clearly, $N_k[v] \cap J = N_k[v] \cap J' = \{u\}$. Therefore, $\mathrm{mdi}_k^*(T,u) \geq 2$. Applying Lemma~\ref{lem3} yields the desired inequality.
\end{proof}

\begin{lemma}\label{lem7}
Let $T$ be a tree with a branching vertex $w$ adjacent to two pendant $s$-paths, where $s \geq 1$. Let $T'$ be the tree obtained from $T$ by removing one of these $s$-paths. Then, for all $k \geq 2$, the following hold:

(i) If $k \geq 2s$, then $\mathrm{mdi}_k(T) \geq \mathrm{mdi}_k(T') + s$.

(ii) If $k \in \{2s - 1,2s-2\}$, then $\mathrm{mdi}_k(T) \geq \mathrm{mdi}_k(T') + s - 1$.
\end{lemma}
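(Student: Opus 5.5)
Let the two pendant $s$-paths at $w$ be $P = w\,a_1 a_2 \dots a_s$ and $Q = w\,b_1 b_2\dots b_s$, where $a_s,b_s$ are leaves and $T' = T\setminus\{a_1,\dots,a_s\}$. I would peel off the vertices $a_s, a_{s-1},\dots,a_1$ one at a time using Lemma~\ref{lem3}. Set $T_j = T \setminus \{a_{j+1},\dots,a_s\}$, so that $T_s = T$ and $T_0 = T'$. In $T_j$ the vertex $a_j$ is a leaf, and Lemma~\ref{lem3} gives
$$\mathrm{mdi}_k(T_j) = \mathrm{mdi}_k(T_{j-1}) + \mathrm{mdi}^*_k(T_j,a_j).$$
Summing over $j$ yields $\mathrm{mdi}_k(T) - \mathrm{mdi}_k(T') = \sum_{j=1}^{s}\mathrm{mdi}^*_k(T_j,a_j)$. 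It therefore suffices to show $\mathrm{mdi}^*_k(T_j,a_j)\ge 1$ for every $j$ in case (i), and for all but at most one $j$ in case (ii).

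To get $\mathrm{mdi}^*_k(T_j,a_j) \ge 1$ I need a $k$-MDIS $J$ of $T_j$ containing $a_j$, together with a witness $x \ne a_j$ such that $N_k[x]\cap J=\{a_j\}$. The natural witness is the vertex $b_j$ on the twin path $Q$. In $T_j$ we have $\mathrm{dist}(a_j,w)=\mathrm{dist}(b_j,w)=j$. Both $a_j$ and $b_j$ lie on pendant paths, so outside $P\cup Q$ their $k$-neighborhoods coincide. The point is then to choose the rest of $J$ so that no other vertex of $J$ is within distance $k$ of $b_j$. I would build $J$ greedily, as in the proof of Lemma~\ref{lem4}. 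Start with $\{a_j\}$. It already covers everything within distance $k$ of $a_j$, which includes all of $P\cap T_j$, all of $Q$ when $j+s\le k$, and the part of $T$ near $w$. Then extend $J$ to a maximal set using only vertices at distance at least $k+1$ from both $a_j$ and $b_j$. Any vertex outside $P\cup Q$ is at the same distance from $a_j$ as from $b_j$, so vertices there that are uncovered by $a_j$ are also far from $b_j$. Hence adding such vertices never puts anything in $N_k[b_j]$ except $a_j$.

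The only possible trouble comes from vertices of $Q$ that $a_j$ does not cover, namely $b_i$ with $j+i>k$. If such an uncovered $b_i$ exists, maximality may force a vertex of $Q$ into $J$, and that vertex lies within distance $k$ of $b_j$, destroying the witness. When $k\ge 2s$ we have $j+i\le 2s\le k$ for all $i,j\le s$, so $a_j$ covers all of $Q$ and every $j$ contributes at least $1$, which proves (i).

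For (ii), $k\in\{2s-1,2s-2\}$, the problem arises only for large $j$, in particular $j=s$. There I would change the witness: instead of $b_j$ use $x=a_{j-1}$ (or $w$), and put into $J$ a vertex of $Q$ far from that witness, so that $N_k[x]\cap J=\{a_j\}$ still holds. I expect this case analysis to be the main obstacle: checking for each $j$ that some witness works, and determining exactly how many indices $j$ (one, giving the bound $s-1$) are lost. Failing that, I would fall back on Lemma~\ref{lem6}(i) for the last step, since $a_s$ and $b_s$ may be $k$-twins, which gives $\mathrm{mdi}^*\ge1$ directly whenever $N_k[a_s]=N_k[b_s]$.
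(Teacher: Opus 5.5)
Your telescoping via Lemma~\ref{lem3} with the twin witness $b_j$ is the paper's argument for (i). It also settles (ii) when $k=2s-1$: there $j+i\le 2s-1=k$ for all $j\le s-1$, $i\le s$, so only $j=s$ is lost. You should state this explicitly.

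The gap is the case $k=2s-2$. Here $a_{s-1}$ fails to cover $b_s$ as well, since $\mathrm{dist}(a_{s-1},b_s)=2s-1>k$. So the twin witness fails for both $j=s-1$ and $j=s$, and you can afford to lose only one of them. Neither index is guaranteed to contribute:
\begin{itemize}
\item For $s=k=2$ and $T$ the spider with legs of lengths $2,2,1$ at $w$, one checks $\mathrm{mdi}_2(T)=\mathrm{mdi}_2(T\setminus a_2)=4$ and $\mathrm{mdi}_2(T')=3$. So $\mathrm{mdi}^*_2(T,a_2)=0$, and no witness at all works for $j=s$; in particular neither $a_{1}$ nor $w$ does.
\item For $s=3$, $k=4$ and legs $3,3,1$, the values of $\mathrm{mdi}_4$ on $T_3,T_2,T_1,T_0$ are $7,6,6,5$. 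Here $j=s-1$ contributes $0$ and only $j=s$ helps.
\end{itemize}
Your fallback is also vacuous in case (ii): $\mathrm{dist}(a_s,b_s)=2s>k$, so $a_s$ and $b_s$ are never $k$-twins. A per-index witness search would therefore need a case split on the rest of the tree, and the proposal does not supply one.

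The missing idea is to handle the last two deletions jointly rather than one leaf at a time. Define $F:\mathcal{M}_k(T_{s-2})\to\mathcal{M}_k(T)$ by $F(J)=J$ if $J$ is maximal in $T$, and $F(J)=J\cup\{a_s\}$ otherwise. In the latter case $a_s$ is uncovered by $J$: all of $V(T_{s-2})$ is covered, and if $a_{s-1}$ is uncovered then so is $a_s$. Hence $F(J)$ is a $k$-MDIS of $T$. The map $F$ is injective, since restricting to $V(T_{s-2})$ recovers $J$. No set in its image contains $a_{s-1}$, whereas some $k$-MDIS of $T$ does. Therefore $\mathrm{mdi}_k(T)\ge\mathrm{mdi}_k(T_{s-2})+1$, that is, $\mathrm{mdi}^*_k(T_{s-1},a_{s-1})+\mathrm{mdi}^*_k(T_s,a_s)\ge 1$, without deciding which term is positive. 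Combined with your twin steps $j=1,\dots,s-2$, this yields $s-1$.
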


\begin{proof}
Let $u_1\dots u_s$ and $v_1\dots v_s$ be the pendant paths adjacent to $w$ such that $u_1$ and $v_1$ are leaves of $T$, and suppose the path $v_1\dots v_s$ does not belong to $T'$. For $i \in [s]$, let $T_i$ denote the tree obtained from $T$ by removing the path $v_1\dots v_i$. Statement~(i) follows immediately from Lemma~\ref{lem6}(i) and the observation that if $k \geq 2s$, then $T \succ_k T_1 \succ_k \dots \succ_k T_s = T'$. 

We now prove (ii). By Lemma~\ref{lem3}, $\mathrm{mdi}_k(T) \geq \mathrm{mdi}_k(T_1)$. If $k = 2s - 1$, then the statement follows from the fact that $T_1 \succ_k T_2 \succ_k \dots \succ_k T_s = T'$. Suppose that $k = 2s - 2$. We first prove that $\mathrm{mdi}_k(T) > \mathrm{mdi}_k(T_2)$. Consider a mapping $F: \mathcal{M}_k(T_2) \rightarrow \mathcal{M}_k(T)$ defined by $F(J) = J$ if $J \in \mathcal{M}_k(T)$, and $F(J) = J \cup \{v_1\}$ otherwise. Since there exists a set $J' \in \mathcal{M}_k(T)$ containing $v_2$, $F$ is not a bijection; thus $\mathrm{mdi}_k(T) > \mathrm{mdi}_k(T_2)$. If $s = 2$, this immediately implies statement~(ii). If $s \geq 3$, statement~(ii) follows from the fact that $T_2 \succ_k T_3 \succ_k \dots \succ_k T_s = T'$, which, by Lemma~\ref{lem6}(i), implies that $\mathrm{mdi}_k(T') \leq \mathrm{mdi}_k(T_2) - s + 2$.
\end{proof}

\begin{lemma}\label{lem8}
For every $k \geq 2$, the following holds: if a tree $T$ of diameter at most $k + 3 - (k \bmod 2)$ has a non-central branching vertex, then it has a $k$-twin leaf.
\end{lemma}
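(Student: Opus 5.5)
The plan is to find an explicit partner $x$ for a suitable leaf $u$ with $N_k[u]=N_k[x]$, using a ``folding'' symmetry at a branching vertex lying as far from the centre as possible. Fix a central vertex $c$ of $T$; if $T$ is bicentral, let $c$ be the central vertex closer to the branching vertices under consideration. Among all non-central branching vertices, choose $w$ with $d=\mathrm{dist}(w,c)$ maximal; by hypothesis $d\ge 1$.

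\textbf{Step 1: the branches of $w$ are pendant paths.} Every component of $T\setminus w$ not containing $c$ contains no branching vertex, since such a vertex would be non-central and farther from $c$ than $w$. Hence each such component is a pendant path attached to $w$. As $w$ has degree at least $3$ and only one neighbour lies toward $c$, there are at least two such pendant paths, say $u_1\dots u_a$ and $v_1\dots v_b$ with leaves $u=u_1$, $v=v_1$, $u_a,v_b$ adjacent to $w$, and $a\le b$.

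\textbf{Step 2: bound the lengths.} Every vertex of $T$ is within distance $\lfloor \mathrm{diam}(T)/2\rfloor$ of the centre, counted from the nearer central vertex in the bicentral case. I would verify that the branch heights satisfy
$$a\le b\le \Big\lceil \tfrac{\mathrm{diam}(T)}{2}\Big\rceil-d\le \Big\lceil \tfrac{\mathrm{diam}(T)}{2}\Big\rceil-1 ,$$
where the middle inequality uses that $v$ is no farther from the centre than the radius. Using $\mathrm{diam}(T)\le k+3-(k\bmod 2)$, this should give $2b\le k$: for $k$ even, $\mathrm{diam}(T)\le k+3$ and $\lceil (k+3)/2\rceil-1=k/2+1$, which is one too many. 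So in the even case I must use the choice of $c$ and the parity of the diameter more carefully. If $\mathrm{diam}(T)=k+3$ is odd, $T$ is bicentral and the height bound is taken from the nearer central vertex. If $\mathrm{diam}(T)\le k+2$, then the radius is at most $k/2+1$, so $b\le k/2$. The target inequalities are $2a\le k$ and $a+b\le k$, both of which follow from $b\le \lfloor k/2\rfloor$.

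\textbf{Step 3: the twin.} Let $x=v_{b-a+1}$ be the vertex on the $v$-path at distance $a$ from $w$; if $a=b$ then $x=v$. I claim $N_k[u]=N_k[x]$. For a vertex $z$ outside the two pendant paths, $\mathrm{dist}(u,z)=a+\mathrm{dist}(w,z)=\mathrm{dist}(x,z)$. A vertex on the $u$-path is within $a$ of $u$ and within $2a\le k$ of $x$. A vertex on the $v$-path is within $a+b\le k$ of $u$ and within $\max(a,b-a)\le k$ of $x$. Hence both closed neighbourhoods consist of exactly the same vertices. Since $x\ne u$, $u$ is a $k$-twin leaf.

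The main obstacle is Step 2: getting the exact off-by-one right in the relation between the height of $w$'s pendant branches, $d\ge 1$, and the diameter bound $k+3-(k\bmod 2)$, separately for even and odd $k$ and for unicentral and bicentral $T$. If the pure height bound fails in a boundary case, for example $k$ even and branches of length $k/2+1$, I would fall back on a different twin. If $\mathrm{ecc}(u)\le k$, then $u$ is a $k$-twin of its neighbour, since $N_k[u]\subseteq N_k[u_2]$ and the difference consists of vertices at distance exactly $k+1$ from $u$. I would then show directly that a too-long branch contradicts the diameter bound or forces $\mathrm{ecc}(u)\le k$.
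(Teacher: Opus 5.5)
Your proposal is correct and follows essentially the paper's argument: take a non-central branching vertex $w$ farthest from its nearest central vertex, note that its two outward branches are pendant paths of length at most $\lfloor k/2\rfloor$, and fold the shorter path onto the longer one to get the twin. One fix is needed in Step 2: use the bound you state in words, $d+b\le\lfloor \mathrm{diam}(T)/2\rfloor$ measured from the nearer central vertex, rather than the $\lceil\cdot\rceil$ display (which fails not only for even $k$ with diameter $k+3$ but also for odd $k$ with diameter $k+2$); this gives $b\le\lfloor \mathrm{diam}(T)/2\rfloor-1\le\lfloor k/2\rfloor$ for both parities at once, so no fallback twin is needed.
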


\begin{proof}
Let $w$ be a non-central branching vertex of $T$, and let $u$ be the central vertex closest to $w$. Assume that $w$ is chosen so that $\mathrm{dist}(w,u)$ is as large as possible. Then~$w$ must be adjacent to at least two pendant paths not containing $u$, say $P'$ and $P''$, with leaves $\ell_1$ and $\ell_2$, respectively. Since $w$ is not a central vertex, $\max(\mathrm{dist}(\ell_1,w),\mathrm{dist}(\ell_2,w)) \leq \lfloor k/2 \rfloor$. We may assume that $\mathrm{dist}(\ell_1,w) \leq \mathrm{dist}(\ell_2,w)$; then there exists a vertex $x$ on $P''$ such that $N_k[\ell_1] = N_k[x]$. Thus, $\ell_1$ is a $k$-twin, as desired.
\end{proof}

\begin{lemma}\label{lem9}
For every odd $k \geq 3$, the following hold:

(i) A tree $T$ of diameter $k + 1$ has no $k$-twin leaves if and only if there exists $p \geq 2$ such that $T \cong S_{p,\lfloor k/2 \rfloor+1}$.

(ii) A tree $T$ of diameter $k + 2$ has no $k$-twin leaves if and only if there exists $p \geq 2$ such that $T \in \mathcal{B}_{p,\lfloor k/2 \rfloor+1}$.
\end{lemma}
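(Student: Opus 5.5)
Write $k = 2h+1$ with $h \geq 1$, so that $\lfloor k/2 \rfloor + 1 = h+1$. The plan is to treat both parts the same way: show the listed trees have no $k$-twin leaves, and show that any tree of the given diameter without $k$-twin leaves is forced into the listed shape. The forcing step will come from Lemma~\ref{lem8} together with a direct comparison of $k$-neighbourhoods of leaves. Since $k + 1 \leq k + 3 - (k \bmod 2) = k+2$ and $k+2 \leq k+2$, Lemma~\ref{lem8} applies in both (i) and (ii). So a tree of these diameters with no $k$-twin leaf has no non-central branching vertex, and every branching vertex is central.

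For (i), the diameter $2h+2$ is even, so there is a unique center $c$ with eccentricity $h+1$. Since only $c$ may branch, $T$ is $c$ joined to $p \geq 2$ pendant paths of lengths $\leq h+1$. At least two of them have length exactly $h+1$ (diametral). Suppose some path has leaf $\ell$ with $\mathrm{dist}(\ell,c) = t \leq h$. Then $N_k[\ell]$ contains everything within distance $k - t \geq h+1$ of $c$, i.e.\ all of $V(T)$. On a diametral path, the vertex at distance $h$ from $c$ (the neighbour of the leaf) also has $N_k = V(T)$, since its distance to any other leaf is at most $h + h + 1 = k$. So $\ell$ is a $k$-twin, a contradiction. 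Hence all pendant paths have length $h+1$, and $T \cong S_{p,h+1}$.

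Conversely, in $S_{p,h+1}$ consider a leaf $\ell$. Its distance to the leaves of the other paths is $2h+2 = k+1$, so $N_k[\ell]$ misses exactly those leaves. Any vertex $v \neq \ell$ that avoids all other leaves at distance $\leq k$ must lie on $\ell$'s path at distance $h+1$ from $c$, which is $\ell$ itself. So there are no twins. (For $p \geq 2$ the neighbourhoods of distinct leaves are visibly distinct.)

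For (ii), the diameter $2h+3$ is odd, so there are two central vertices $c_1, c_2$, and only they may branch. Hence $T$ consists of the edge $c_1c_2$ with pendant paths attached: $p_1 \geq 1$ of them at $c_1$ and $p_2 \geq 1$ at $c_2$, all of length $\leq h+1$, with at least one of length $h+1$ on each side. Suppose a leaf $\ell$ at $c_1$ has path length $t \leq h$. Its distance to any other leaf is at most $t + 1 + h + 1 \leq 2h+2 \leq k+1$, with equality only if $t = h$ and the other leaf is on the far side at full length. I will compare $N_k[\ell]$ with $N_k[x]$, where $x$ is the vertex at distance $h$ from $c_1$ on a full-length path at $c_1$. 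That path exists because $p_1 \geq 1$ forces a diametral leaf there. Both neighbourhoods miss exactly the same far-side leaves, which makes $\ell$ a twin. The main obstacle is this case check, in particular the degenerate case $t = h$ where equality occurs. If the neighbourhoods do not match exactly, I would instead pick $x$ as the neighbour of $c_1$ on $\ell$'s own path, adjusted appropriately. Once this is settled, all paths have length $h+1$ and $T = B_{p_1,p_2,h+1} \in \mathcal{B}_{p,h+1}$ with $p = p_1 + p_2 \geq 2$.

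The converse is checked as in (i). For a leaf $\ell$ at $c_1$, the set $V(T) \setminus N_k[\ell]$ is exactly the set of $c_2$-side leaves, at distance $2h+3$, together with their neighbours, at distance $2h+2$. A vertex $v \neq \ell$ with the same property must be at distance $\geq k+1$ from those neighbours, which forces $v = \ell$. So $B_{p_1,p_2,h+1}$ has no $k$-twin leaves.
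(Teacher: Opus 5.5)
Your route is the paper's. Lemma~\ref{lem8} removes non-central branching vertices. A pendant path with at most $h$ vertices at a central vertex yields a twin leaf. The converse is a direct check. Part (i) is correct as written.

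\textbf{The forward direction of (ii).} The gap is the step you flagged. Take $\ell$ at distance $t\le h$ from $c_1$. You showed every vertex is within distance $k$ of $\ell$, except, when $t=h$, the full-length leaves on the $c_2$ side. So for $t\le h-1$ you have $N_k[\ell]=V(T)$. Your vertex $x$, however, is at distance $(h+1)+(h+1)=k+1$ from every full-length $c_2$-side leaf, so $N_k[x]\neq V(T)$.

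Hence ``both neighbourhoods miss exactly the same far-side leaves'' holds only for $t=h$. That is the case $x$ actually handles; it is not a degenerate case. For $t<h$ the argument fails as written. Your fallback does not rescue it: for $t=1$ the neighbour of $c_1$ on $\ell$'s path is $\ell$ itself. The repair is one line. For $t\le h-1$ use $c_1$ as the twin, since $\mathrm{ecc}(c_1)=h+2\le 2h+1=k$ gives $N_k[c_1]=V(T)=N_k[\ell]$. Keep $x$ only for $t=h$.

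\textbf{The converse of (ii).} There is a smaller slip here. If $p_1\ge 2$, the other $c_1$-side leaves are at distance $2h+2=k+1$ from $\ell$. So $V(T)\setminus N_k[\ell]$ contains them too, not only the $c_2$-side leaves and their neighbours.

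Being at distance at least $k+1$ from a $c_2$-side neighbour of a leaf only forces $v$ to be some leaf on the $c_1$ side. You still need $v\in N_k[v]=N_k[\ell]$, i.e.\ $\mathrm{dist}(v,\ell)\le k$. That excludes the other $c_1$-side leaves and gives $v=\ell$.

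With these two fixes the proof is complete. The paper itself only asserts that these checks are easy.
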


\begin{proof}
We prove only (ii); statement (i) can be proved similarly. A tree in $\mathcal{B}_{p,\lfloor k/2 \rfloor+1}$ obviously has no $k$-twin leaves. Suppose that a tree $T \notin \mathcal{B}_{p,\lfloor k/2 \rfloor+1}$ of diameter $k + 2$ has no $k$-twin leaves. Let $u$ and $v$ be the central vertices of $T$. If either of them is adjacent to a pendant $s$-path with $s \leq \lfloor k/2 \rfloor$, then it is easy to check that the leaf of this path is a $k$-twin in $T$, a contradiction. Otherwise, there must exist a non-central branching vertex, which by Lemma~\ref{lem8}, implies that $T$ has a $k$-twin leaf, again a contradiction.
\end{proof}

\begin{lemma}\label{lem10}
Let $k,p \geq 2$, and let $T$ be a tree with a central leaf $\ell$ such that the main subtree $T_{\ell}$ containing $\ell$ has the maximum possible number of vertices among all main subtrees of $T$ that contain a central leaf. The following hold:

(i) If $T \in \mathrm{Add}_k(S'_{p,\lfloor k/2 \rfloor+1},r)$, $r \in \big[\lfloor k/2 \rfloor\big]$, then $T \setminus \ell \in \mathrm{Add}_k(S'_{p,\lfloor k/2 \rfloor+1},r - 1)$;

(ii) If $T \in \mathrm{Add}^*_k(S_{p,\lfloor k/2 \rfloor+1},r)$, $r \in \big[\lfloor k/2 \rfloor\big]$, then $T \setminus \ell \in \mathrm{Add}^*_k(S_{p,\lfloor k/2 \rfloor+1},r - 1)$.
\end{lemma}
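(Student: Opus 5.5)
The plan is to set $s=\lfloor k/2\rfloor+1$ and check the three defining conditions of $\mathrm{Add}_k(S'_{p,s},r-1)$ for $T\setminus\ell$ one at a time:
\begin{itemize}
\item $T\setminus\ell$ has diameter $\mathrm{diam}(T)$;
\item $T\setminus\ell$ still contains a copy of $S'_{p,s}$;
\item $T\setminus\ell$ has no diametral $k$-twins.
\end{itemize}
Part (ii) should go the same way, with "$k$-special pair" in place of "diametral $k$-twin". Throughout I fix a copy $S$ of $S'_{p,s}$ (resp.\ $S_{p,s}$) in $T$ with centre $c$. Since $r\le s-1$, the $r$ added vertices form small subtrees hanging off $S$, and none of them reaches depth more than $s$ below $c$, because the diameter is unchanged.

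\textbf{Step 1 (diameter and diametral leaves).} First I would show that $\ell$ is not a diametral leaf of $T$, or, when every leaf is diametral, that deleting $\ell$ still leaves a diametral path. The legs of $S$ give at least two diametral leaves lying in different main subtrees. The central leaf $\ell$ has minimum eccentricity, so if it were diametral every leaf would be diametral, and the count $|V(T)|<p s+s$ then leaves enough diametral leaves outside $T_\ell$ to keep the diameter. This also shows that the diametral leaves of $T\setminus\ell$ are exactly the diametral leaves of $T$ other than possibly $\ell$.

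\textbf{Step 2 (a copy of $S'_{p,s}$ avoiding $\ell$).} If $\ell\notin V(S)$ we are done. Otherwise $\ell$ is the end of a leg of $S$, and being central it is the short leg (length $s-1$) of $S'_{p,s}$, or a leg in a main subtree all of whose vertices are leg vertices. Here I would use the maximality of $|V(T_\ell)|$. Some added vertex must lie in a main subtree $T'$ containing a central leaf; otherwise $T$ would have too few vertices for $\ell$ to be chosen in such a subtree. By maximality, $T_\ell$ has at least as many vertices as $T'$. Counting then shows that $T_\ell\setminus\ell$ or another main subtree still contains a path of length $s-1$ from the centre. Re-embedding the short leg there gives a copy of $S'_{p,s}$ in $T\setminus\ell$. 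In case (ii) the same re-embedding gives a full leg of length $s$.

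\textbf{Step 3 (no new diametral twins).} Suppose $x,y$ are diametral leaves with $N_k[x]=N_k[y]$ in $T\setminus\ell$ but not in $T$. Then $\ell$ lies in exactly one of $N_k[x]$ and $N_k[y]$, say $\mathrm{dist}(x,\ell)\le k<\mathrm{dist}(y,\ell)$. Equality of the neighbourhoods in $T\setminus\ell$ forces $x$ and $y$ to sit at the same depth symmetric about their branch point. So $\ell$ must be separated from $y$ by one extra unit of depth, which means $\ell$ is deeper than every other vertex of its main subtree in that direction. I would derive a contradiction by exhibiting a leaf of smaller eccentricity than $\ell$ in a main subtree at least as large as $T_\ell$, using again that $r<s$ and the choice of $\ell$. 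For (ii), the same argument must also rule out that the two diametral $k$-twins created after deletion form a special pair; this reduces to the same distance comparison.

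I expect Step 3, together with the re-embedding in Step 2, to be the main obstacle. It requires a careful case analysis on where the $r$ added vertices sit relative to $c$, and this is exactly where the specific choice of $\ell$ (central, in a largest main subtree) has to be exploited. I would first try the case where all added vertices lie in one main subtree, and then argue that the general case only adds room.
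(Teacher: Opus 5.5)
Your three ingredients (the diameter survives, a copy of $S'_{p,s}$ resp.\ $S_{p,s}$ survives, and no diametral $k$-twins resp.\ $k$-special pairs appear) are the paper's. Your Step~2 is a looser form of its case analysis on $\mathrm{ecc}(\ell)$. To make Step~2 precise, replace the ``counting'' by the dichotomy the paper actually uses. Since $\ell$ is central, every leaf has distance to the centre at least that of $\ell$. So either $T_\ell$ has a second leaf $\ell'$, and the leg of your copy inside $T_\ell$ can be rerouted towards $\ell'$; or $T_\ell$ is a bare path. In the latter case, maximality of $|V(T_\ell)|$ (together with the absence of diametral twins in (i)) forces every main subtree to be a pendant $(s-1)$- or $s$-path. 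The vertex count modulo $s$ then gives one of three outcomes: a second pendant $(s-1)$-path (with $r=s-1$), or $T\cong S_{p,s}$ with $r=1$, or, in (ii), a contradiction. In Step~1 the bound $|V(T)|<ps+s$ is beside the point. What keeps the diameter is that $T$ is not a path. For $p\ge 3$ your copy already meets three main subtrees, each containing a leaf, and that leaf is diametral if all leaves are. For $p=2$ you need that each side has at most one diametral leaf in (i), and maximality of $T_\ell$ together with $r\ge 1$ in (ii).

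The real weak point is Step~3. You expect it to be the main obstacle and plan to settle it by exploiting the choice of $\ell$ and $r<s$ to find a leaf of smaller eccentricity. Neither ingredient is relevant, and the configuration you describe (with $\ell$ one unit deeper on one side) never arises. Here the diameter is $2s\in\{k+1,k+2\}$, or $2s-1=k+1$ in (i) with $p=2$. In such a tree, two diametral leaves $x,y$ in the same main subtree (the same side, for odd diameter) always satisfy $N_k[x]=N_k[y]$. Indeed, let $b$ be the vertex where their paths to the centre meet. Every vertex whose path to the centre passes through $b$ is within distance $2(s-1)\le k$ of both, and every other vertex is equidistant from $x$ and $y$. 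Diametral leaves in different main subtrees are at distance at least $k+1$, so they are never twins. Moreover, any vertex with the same closed $k$-neighbourhood as a diametral leaf is itself a diametral leaf of the same main subtree: use the argument in the proof of Lemma~\ref{lem6}(ii) for diameter $k+2$, and a direct check for diameter $k+1$. Deleting $\ell$ preserves the diameter and the centre, so the diametral leaves of $T\setminus\ell$ are diametral leaves of $T$ in the same main subtrees. Hence every diametral $k$-twin and every $k$-special pair of $T\setminus\ell$ is already one in $T$. In particular, $\ell$ can never separate $N_k[x]$ from $N_k[y]$. This is what the paper's one-line remark that removing $\ell$ cannot create diametral $k$-twins rests on.
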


\begin{proof}
We prove only (i); statement (ii) can be proved similarly. Since $T$ is not a simple path, it follows from the choice of $T_{\ell}$ that $\mathrm{diam}(T \setminus \ell) = \mathrm{diam}(T)$. By definition of $\mathrm{Add}_k(S'_{p,\lfloor k/2 \rfloor+1},r)$, $T$ does not have diametral $k$-twins; since removing $\ell$ cannot create any diametral $k$-twins, $T \setminus \ell$ also lacks them. It remains to show that $S'_{p,\lfloor k/2 \rfloor+1} \subseteq T \setminus \ell$.  If $p = 2$, this is trivially true because $S'_{2,\lfloor k/2 \rfloor+1} \cong P_{2\lfloor k/2 \rfloor+2}$. If $p \geq 3$, then~$T$ has exactly one central vertex $v$; moreover, this vertex is also central in every $S'_{p,\lfloor k/2 \rfloor+1}$-subtree of $T$. We now consider three cases: 

\textbf{Case 1.} $\mathrm{ecc}(\ell) \leq 2\lfloor k/2 \rfloor$. Then $\mathrm{dist}(v,\ell) < \lfloor k/2 \rfloor$, which means $\ell$ cannot be a leaf in any $S'_{p,\lfloor k/2 \rfloor+1}$-subtree of $T$. Therefore, $S'_{p,\lfloor k/2 \rfloor+1} \subseteq T \setminus \ell$. 

\textbf{Case 2.} $\mathrm{ecc}(\ell) = 2\lfloor k/2 \rfloor + 1$.

\textbf{Case 2.1.} $T_{\ell}$ is a pendant $\lfloor k/2 \rfloor$-path. Recall that $T$ does not contain diametral $k$-twins. Then, by the choice of $T_{\ell}$, all other main subtrees of $T$ are pendant $\lfloor k/2 \rfloor$- or $(\lfloor k/2 \rfloor + 1)$-paths. Since $p \geq 3$, $T$ has at least two pendant $\lfloor k/2 \rfloor$-paths, implying $S'_{p,\lfloor k/2 \rfloor+1} \subseteq (T \setminus T_{\ell}) \subseteq T \setminus \ell$. 

\textbf{Case 2.2.} $T_{\ell}$ is not a pendant $\lfloor k/2 \rfloor$-path. Let~$\ell'$ denote the leaf of maximum eccentricity in $T_{\ell}$; we may assume $\ell' \neq \ell$. If there exists an $S'_{p,\lfloor k/2 \rfloor+1}$-subtree of $T$ containing $\ell'$, then this subtree does not contain $\ell$, as desired. Otherwise, no $S'_{p,\lfloor k/2 \rfloor+1}$-subtree contains vertices from $T_{\ell}$, implying $S'_{p,\lfloor k/2 \rfloor+1} \subseteq T \setminus T_{\ell} \subseteq T \setminus \ell$.

\textbf{Case 3.} $\mathrm{ecc}(\ell) = 2\lfloor k/2 \rfloor + 2$. That is, $\ell$ is a diametral leaf, but not a $k$-twin. By the choice of $T_{\ell}$, all main subtrees of $T$ are pendant $(\lfloor k/2 \rfloor+1)$-paths, meaning $T \cong S_{p,\lfloor k/2 \rfloor+1} \in \mathrm{Add}_k(S'_{p,\lfloor k/2 \rfloor+1},1)$ and $T \setminus \ell \cong S'_{p,\lfloor k/2 \rfloor+1} \in \mathrm{Add}_k(S'_{p,\lfloor k/2 \rfloor+1},0)$, as required. 
\end{proof}

\section{Main result}

Our proof relies on the following key properties:

\begin{itemize}
	\item Removing a leaf from a tree cannot increase the total number of $k$-MDISs (Lemma~\ref{lem3}). Furthermore, if the removed leaf was a $k$-twin, the total number of $k$-MDISs strictly decreases  (Lemma~\ref{lem6}).
	\item Trees of small diameter with no $k$-twin leaves have a simple structure (Lemmas~\ref{lem8} and~\ref{lem9}).
\end{itemize}

The proof strategy differs a bit depending on the parity of $k$; this is why we consider the even case and the odd case separately. If $k$ is even, then all $k$-minimal trees with at least $(3k+6)/2$ vertices have diameter $k + 2$. However, if $k$ is odd, then large $k$-minimal trees may have diameter $k + 1$ or $k + 2$. 

\subsection{Even $k \geq 2$}

\begin{lemma}\label{lem11}
Let $T$ be an $n$-vertex tree of diameter $k + 1$. Then $\mathrm{mdi}_k(T) \geq n - 1$ with equality if and only if $T \in \mathrm{Add}^*_k(P_{k+2},n-k-2)$.
\end{lemma}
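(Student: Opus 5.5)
The plan is to count the $k$-MDISs of $T$ explicitly rather than argue by induction: when $\mathrm{diam}(T)=k+1$, almost all $k$-MDISs are singletons. Since $k$ is even, $\mathrm{diam}(T)=k+1$ is odd, so $T$ has two adjacent central vertices $c_1,c_2$. Removing the edge $c_1c_2$ splits $T$ into two components. Let $A$ (resp.\ $B$) be the set of diametral leaves in the component of $c_1$ (resp.\ $c_2$), and put $a=|A|$, $b=|B|$. A diametral path joins a vertex at distance $k/2$ from $c_1$ on one side to a vertex at distance $k/2$ from $c_2$ on the other. Hence $a,b\geq 1$, and $A$ (resp.\ $B$) is exactly the set of vertices at distance $k/2$ from $c_1$ (resp.\ $c_2$) on the corresponding side.

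The first step is to compute eccentricities. A vertex has eccentricity $k+1$ if and only if it lies in $A\cup B$; every other vertex has eccentricity at most $k$. Moreover, every $x\in A$ and $y\in B$ satisfy $\mathrm{dist}(x,y)=k+1$, while any two vertices on the same side are at distance at most $k$. Consequently, for each of the $n-a-b$ vertices $v\notin A\cup B$ we have $N_k[v]=V(T)$, so $\{v\}$ is a $k$-MDIS. Every other $k$-MDIS lies inside $A\cup B$ and contains at most one vertex from each side. A single leaf $\{x\}$ with $x\in A$ is not maximal, because any $y\in B$ can be added. Conversely, each pair $\{x,y\}$ with $x\in A$, $y\in B$ is a $k$-DIS, and it is maximal because every other vertex lies within distance $k$ of $x$ or of $y$. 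Therefore
$$\mathrm{mdi}_k(T)=(n-a-b)+ab=n-1+(a-1)(b-1)\geq n-1,$$
with equality if and only if $a=1$ or $b=1$.

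It remains to match the condition ``$a=1$ or $b=1$'' with membership in $\mathrm{Add}^*_k(P_{k+2},n-k-2)$. Every tree of diameter $k+1$ on $n$ vertices contains a diametral path $P_{k+2}$, so membership reduces to the absence of $k$-special pairs. For $x\in A$ we have $N_k[x]=V(T)\setminus B$. The ball of any non-diametral vertex is $V(T)$, and the ball of any $y\in B$ is $V(T)\setminus A$. Hence $x$ is a $k$-twin if and only if $a\geq 2$, and symmetrically for $B$. Two twin leaves on the same side have equal balls, so a $k$-special pair exists if and only if $a\geq 2$ and $b\geq 2$. This is exactly the negation of the equality condition.

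The main obstacle is the maximality check in the middle step: one must verify that the singletons $\{v\}$ with $v\notin A\cup B$ and the pairs $\{x,y\}$ are the only $k$-MDISs, with no other mixed sets. This rests on the distance facts just listed, namely that distinct sides are at distance exactly $k+1$ across $A\times B$ and that $N_k[v]=V(T)$ for every non-diametral vertex $v$.
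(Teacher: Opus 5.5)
Your proposal is correct and follows the paper's proof essentially verbatim: you split $T$ at the edge joining the two central vertices, count the $n-a-b$ singleton $k$-MDISs of non-diametral vertices plus the $ab$ cross pairs of diametral leaves to get $n-1+(a-1)(b-1)$, and identify $\min(a,b)=1$ with the absence of $k$-special pairs. Beyond the paper, you spell out the ball computations it leaves implicit ($N_k[x]=V(T)\setminus B$ for $x\in A$, and so on), and you correctly rely on $k$ being even, which the lemma needs and which is given by its placement in the even-$k$ subsection.
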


\begin{proof}
Let $T_1$ and $T_2$ denote the inclusion-wise maximal subtrees obtained from~$T$ by deleting the edge between its central vertices. Furthermore, let $\ell_1$ and~$\ell_2$ denote the number of diametral leaves of $T$ belonging to $T_1$ and $T_2$, respectively. Every $k$-MDIS of $T$ is either a singleton containing a vertex that is not a diametral leaf or a pair of vertices consisting of one diametral leaf from $T_1$ and one from $T_2$. Therefore, $$\mathrm{mdi}_k(T) = (n - \ell_1 - \ell_2) + (\ell_1 \cdot \ell_2) \geq n - 1.$$
Equality holds if and only if $\min(\ell_1,\ell_2) = 1$, which in turn holds if and only if $T$ has no $k$-special pairs. By definition, this means $T \in \mathrm{Add}^*_k(P_{k+2},n-k-2)$.
\end{proof}

\begin{lemma}\label{lem12}
Let $T$ be an $n$-vertex tree of diameter at most $k + 2$. Then the following hold:

(i) If $n = p \cdot (k/2 + 1)$, where $p \geq 2$, then $\mathrm{mdi}_k(T) \geq f_k(n)$ with equality if and only if $T \cong S'_{p,k/2+1}$.

(ii) If $n = 2 \cdot (k/2 + 1) + r$, where $r \in [k/2]$, then $\mathrm{mdi}_k(T) \geq  f_k(n)$ with equality if and only if $T \in \mathrm{Add}_k^*(P_{k+2},r) \cup \mathrm{Add}_k(P_{k+3},r-1)$.

(iii) If $n = p \cdot (k/2 + 1) + r$, where $p \geq 3$ and $r \in [k/2]$, then $\mathrm{mdi}_k(T) \geq  f_k(n)$ with equality if and only if $T \in \mathrm{Add}_k(S'_{p,k/2+1},r)$.
\end{lemma}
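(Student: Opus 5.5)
We argue by induction on $n$ and write $s = k/2+1$. For even $k$, $f_k(n) = n - \lfloor n/s \rfloor + 1$, so Lemma~\ref{lem1}(i) gives $f_k(n)=f_k(n-1)$ exactly when $s \mid n$, and $f_k(n)=f_k(n-1)+1$ otherwise. We split on the diameter $d$ of $T$.

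\emph{Small diameter.} If $d \le k$, every $k$-MDIS is a singleton, so $\mathrm{mdi}_k(T)=n > f_k(n)$ whenever $n \ge k+2$.

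\emph{Diameter $k+1$.} Lemma~\ref{lem11} gives $\mathrm{mdi}_k(T) \ge n-1$. Since $\lfloor n/s \rfloor \ge 2$ once $n \ge 2s = k+2$, we have $n-1 \ge f_k(n)$, with equality iff $\lfloor n/s\rfloor = 2$, i.e.\ $n \in [2s, 3s-1]$. This case therefore contributes only to (ii) and to the endpoint $p=2$ of (i) ($r=0$). For those $n$ the extremal trees are exactly $\mathrm{Add}^*_k(P_{k+2}, n-k-2)$. For $p=2$, $r=0$ this is just $P_{k+2} \cong S'_{2,s}$.

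\emph{Diameter $k+2$.} Here we run the induction by deleting leaves, using Lemma~\ref{lem3} (deletion never increases $\mathrm{mdi}_k$) and Lemma~\ref{lem6}.
\begin{itemize}
\item If $T$ has a diametral $k$-twin $u$, then by Lemma~\ref{lem6}(ii) we get $\mathrm{mdi}_k(T) \ge \mathrm{mdi}_k(T\setminus u)+2$. Since $T \setminus u$ has diameter at least $k+1$, induction gives $\mathrm{mdi}_k(T) \ge f_k(n-1)+2 > f_k(n)$.
\item Otherwise, if $T$ has a non-diametral $k$-twin leaf $u$, deleting it keeps the diameter. Then $\mathrm{mdi}_k(T) \ge f_k(n-1)+1$, which is strict when $s \mid n$. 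When $s\nmid n$ equality forces $T\setminus u$ to be extremal, and we recurse.
\item If $T$ has no $k$-twin leaves at all, Lemma~\ref{lem8} (diameter $\le k+3$ for even $k$) says every branching vertex is central. With $d=k+2$ there is a single center $c$, so $T$ is a spider at $c$ with pendant paths of length $\le s$, at least two of them of length exactly $s$. A pendant path of length $<s$ at $c$ has a $k$-twin leaf (the same argument as in Lemma~\ref{lem9}) unless it is the unique shortest such path in a way that cannot be matched. So a twin-free $T$ is essentially $S'_{p,s}$ or $S_{p,s}$ plus few extra vertices, and Lemma~\ref{lem2}(i),(ii) computes $\mathrm{mdi}_k$ for these.
\end{itemize}

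For (i), with $n=ps$ one checks that every tree other than $S'_{p,s}$ either has a twin (giving strict inequality by the above) or is a twin-free spider. The only twin-free spider with equality is $S'_{p,s}$ by Lemma~\ref{lem2}(i), and $S_{p,s}$ has $ps+1$ vertices, so it belongs to the next $n$.

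For (iii), with $n = ps+r$ and $1\le r\le k/2$, we have $f_k(n) = f_k(ps)+r$. To get the lower bound, delete $r$ suitable leaves using Lemma~\ref{lem10}(i), which lets us strip a central leaf while staying in the $\mathrm{Add}_k$ class; each deletion costs at least $1$ in $\mathrm{mdi}_k$ by Lemmas~\ref{lem3}/\ref{lem6}, until we reach $ps$ vertices, where (i) applies. For equality, each step must lose exactly $1$, which rules out diametral twins and forces the residual tree to be $S'_{p,s}$; this gives $T\in \mathrm{Add}_k(S'_{p,s},r)$.

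For the converse, we show trees in $\mathrm{Add}_k(S'_{p,s},r)$ attain $f_k(n)$. Adding a leaf that is not a diametral twin increases $\mathrm{mdi}_k$ by exactly $1$: by Lemma~\ref{lem6}(i) it suffices that the added leaf $\ell$ lies in a unique $k$-MDIS. This holds because $\ell$ sits within distance $\le k$ of everything except the opposite far leaves, and those are forced.

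Case (ii) ($p=2$) combines the diameter-$(k+1)$ family $\mathrm{Add}^*_k(P_{k+2},r)$ with the diameter-$(k+2)$ family obtained by the same stripping argument. That argument now terminates at $P_{k+3}$ with $\mathrm{mdi}_k(P_{k+3}) = k+2 = f_k(k+3)$ by Lemma~\ref{lem5}(ii), giving $\mathrm{Add}_k(P_{k+3},r-1)$.

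\textbf{Main obstacle.} The hard part is the equality characterization for diameter $k+2$. We must show that every extension of $S'_{p,s}$ or $P_{k+3}$ by $r\le k/2$ vertices without diametral twins adds exactly one $k$-MDIS per vertex. That needs the uniqueness-of-MDIS property from Lemma~\ref{lem6}(i) at every step, and Lemma~\ref{lem10} to order the deletions consistently. I would first verify this uniqueness claim directly from the spider structure around the center.
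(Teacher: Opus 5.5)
Your argument is correct in substance. For the lower bound and for part (i), however, it takes a different route from the paper.

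\textbf{The paper's route.} The paper proves (i) by induction on $p$ alone. It shows that every $T\not\cong S'_{p,k/2+1}$ of diameter $k+2$ is either $(k/2+1)$-reducible or $(k/2)$-reducible to $S_{p-1,k/2+1}$. This needs a case analysis of spiders and repeated use of Lemma~\ref{lem8} along the chain. It then jumps from $p(k/2+1)$ to $(p-1)(k/2+1)$ vertices via Lemma~\ref{lem1}(ii). Parts (ii) and (iii) are done by induction on $r$, with (i) as the base.

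\textbf{Your route.} You run a single induction on $n$ over all residues, so one deletion of a $k$-twin leaf suffices. By Lemma~\ref{lem1}(i), when $s\mid n$ (your $s=k/2+1$), the $+1$ from Lemma~\ref{lem6}(i) already gives strict inequality. When $s\nmid n$, equality forces $T\setminus u$ to be extremal with the same diameter. The only structural input left is the even-$k$ analogue of Lemma~\ref{lem9}(i): a diameter-$(k+2)$ tree without $k$-twin leaves is exactly $S_{q,s}$ or $S'_{q,s}$. Lemma~\ref{lem2} then finishes.

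\textbf{Comparison.} Your version is shorter and avoids the reducibility chains. The cost is that (i) at $n=ps$ now relies on (ii) or (iii) at $ps-1$, which your simultaneous induction supplies. For (ii) and (iii) you essentially follow the paper: diametral twins via Lemma~\ref{lem6}(ii), and the converse via Lemma~\ref{lem10}(i).

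\textbf{Points to tighten.} There are three.

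First, state the twin-free classification exactly, not as ``essentially \dots\ plus few extra vertices''; Lemma~\ref{lem2} says nothing about extra vertices. A leg with at most $k/2-1$ vertices has its leaf $k$-twin with the center $c$, because $N_k[c]=V(T)$ and that leaf is within distance $k$ of every vertex. This holds even if it is the unique shortest leg, so your ``unique shortest'' exception is wrong. Two legs with $k/2$ vertices each have $k$-twin leaves. Hence at most one leg, of exactly $k/2$ vertices, can be short.

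Second, drop the paragraph that derives the lower bound in (iii) by stripping leaves via Lemma~\ref{lem10}(i). That lemma presupposes $T\in\mathrm{Add}_k(S'_{p,s},r)$. Also, Lemma~\ref{lem3} only guarantees a loss of at least $0$ for a non-twin leaf. Your bullet argument already gives the bound.

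Third, in the converse, ``adding a leaf that is not a diametral twin adds exactly one'' is false in general. What holds is that the leaf $\ell$ supplied by Lemma~\ref{lem10}(i), when $\mathrm{ecc}(\ell)\le k+1$, lies in a unique $k$-MDIS. The reason is as follows.
\begin{itemize}
\item The vertices at distance $>k$ from $\ell$ are exactly the diametral leaves outside $T_\ell$.
\item Two diametral leaves in the same main subtree have equal closed $k$-neighbourhoods, so the absence of diametral $k$-twins places these leaves pairwise at distance $k+2$.
\item If $\mathrm{ecc}(\ell)=k+2$, then $T\cong S_{p,s}$ and $\ell$ lies in $p$ distinct $k$-MDISs. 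That case must be settled by Lemma~\ref{lem2}(ii).
\end{itemize}
Note also that uniqueness yields only $\mathrm{mdi}^*_k(T,\ell)\le 1$ through Lemma~\ref{lem3}. Equality then follows from the lower bound you have already proved, so you do not need the twin hypothesis of Lemma~\ref{lem6}(i).
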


\begin{proof} 

\textbf{Statement (i).} We apply induction on $p$; the base case $p = 2$ follows from Lemma~\ref{lem11}. For $p \geq 3$, let $T \ncong S'_{p,k/2+1}$ be a $p \cdot (k/2 + 1)$-vertex tree of diameter at most $k + 2$. Since $f_k(p \cdot (k/2 + 1)) < n - 1$, we may assume that $T$ has diameter $k + 2$; let $v$ denote its central vertex. Recall that $f_k(n) = f_k(n - k/2 - 1) + k/2$ by Lemma~\ref{lem1}(ii). To prove that $\mathrm{mdi}_k(T) > f_k(n)$, it suffices to show that either $T$ is $(k/2+1)$-reducible or it is $(k/2)$-reducible to the tree $S_{p-1,k/2+1}$ (Lemma~\ref{lem2} implies that $\mathrm{mdi}_k(S'_{p-1,k/2+1}) < \mathrm{mdi}_k(S_{p-1,k/2+1})$ for all $p \geq 3$ and even $k \geq 2$).

\textbf{Case 1.} $T$ has no branching vertices except $v$. Let $s = \deg(v)$, and let $m_1,\dots,m_s$ denote the number of vertices in the pendant paths adjacent to $v$. Since $\mathrm{diam}(T) = k + 2$, we may assume that $k/2 + 1 = m_1 = m_2 \geq \dots \geq m_s \geq 1$. Let $i$ be the smallest index such that $m_i \leq k/2$ (it exists because $k/2 + 1$ does not divide $n - 1$). If $i = s$, then $m_s = k/2$ and $T \cong S'_{p,k/2+1}$, a contradiction. If $i < s$, consider two subcases:

\textbf{Case 1.1.} $m_s = k/2$.  Note that $k/2+1$ divides $s - i$; hence, $T$ has at least $k/2+2$ pendant $(k/2)$-paths. Let $\ell_1,\dots,\ell_{k/2 + 2}$ be the endvertices of $k/2+2$ such paths. For all $j \in [k/2+1]$, let $T_j$ be the tree obtained from $T$ by deleting the leaves $\ell_1,\dots,\ell_j$. Then $T$ is $(k/2+1)$-reducible, as $T \succ_k T_1 \succ_k \dots \succ_k T_{k/2+1}$.

\textbf{Case 1.2.} $m_s < k/2$. Let $M = \sum_{i' = i}^sm_{i'}$. Since $k/2 + 1$ divides $n$, we have $M \geq k/2$. Let $T_0 = T$, and for all $j \in [k/2]$, choose a tree $T_j$ obtained from $T_{j-1}$ by removing an endvertex of a shortest path of $T_{j-1}$. Observe that, at each step, the removed leaf is a $k$-twin; thus, $T \succ_k T_1 \succ_k \dots \succ_k T_{k/2}$. If, moreover, $T_{k/2} \ncong S_{p-1,k/2+1}$, then the endvertex of a shortest path of $T_{k/2}$ is again a $k$-twin; hence, $T$ is $(k/2+1)$-reducible.

\textbf{Case 2.} $T$ has a non-central branching vertex. Suppose $T$ is not $(k/2)$-reducible to $S_{p-1,k/2+1}$. By Lemma~\ref{lem8}, $T$ has a $k$-twin leaf; hence, it is reducible to some tree $T_1$. If $T_1$ also has a non-central branching vertex, then it also has a $k$-twin. Otherwise, since $k/2+1$ does not divide $|V(T_1)|$, there exist at least two pendant paths with fewer than $k/2+1$ vertices, or a pendant path with fewer than $k/2$ vertices. Therefore, a central leaf of $T_1$ is a $k$-twin, and $T_1$ is reducible to some tree $T_2$. If $k = 2$, $T$ is $2$-reducible, as desired. Otherwise, one can apply this argument $k/2-1$ more times to sequentially obtain trees $T_3,\dots, T_{k/2+1}$. Hence, $T$ is $(k/2+1)$-reducible.

\textbf{Statement (ii).} We apply induction on $r$. The base case $r = 1$ follows from Lemmas~\ref{lem11} and~\ref{lem5}(ii).

\textbf{Case 1.} $T \in \mathrm{Add}_k^*(P_{k+2},r) \cup \mathrm{Add}_k(P_{k+3},r-1)$. If $T \in \mathrm{Add}_k^*(P_{k+2},r)$, then $\mathrm{mdi}_k(T) = f_k(n)$ by Lemma~\ref{lem11}. Suppose that $T \in \mathrm{Add}_k(P_{k+3},r-1)$ and $r \geq 2$. One can see that there exists a central leaf $\ell$ of $T$ such that $\mathrm{diam}(T \setminus \ell) = \mathrm{diam}(T)$ and, therefore, $T \setminus \ell \in \mathrm{Add}_k(P_{k+3},r - 2)$. If $\mathrm{ecc}(\ell) \leq k + 1$, then $\mathrm{mdi}^*_k(T,\ell) = 1$, as there is exactly one $k$-MDIS containing $\ell$; hence, $\mathrm{mdi}_k(T) = \mathrm{mdi}_k(T \setminus \ell) + 1$ by Lemma~\ref{lem6}(i), and thus $\mathrm{mdi}_k(T) = f_k(n-1) + 1 = f_k(n)$. If $\mathrm{ecc}(\ell) = k + 2$, then $T \cong P_{k+3}$, because $T$ has no diametral $k$-twins, a contradiction.

\textbf{Case 2.} $T \notin \mathrm{Add}_k^*(P_{k+2},r) \cup \mathrm{Add}_k(P_{k+3},r-1)$.

\textbf{Case 2.1.} $\mathrm{diam}(T) \leq k + 1$. If $\mathrm{diam}(T) \leq k$, then $\mathrm{mdi}_k(T) = n  > f_k(n)$. Suppose that $\mathrm{diam}(T) = k + 1$. The condition $T \notin \mathrm{Add}_k^*(P_{k+2},r)$ implies that $T$ has a $k$-special pair. Then $\mathrm{mdi}_k(T) \geq n > f_k(n)$ by the previous lemma.

\textbf{Case 2.2.} $\mathrm{diam}(T) = k + 2$. The condition $T \notin \mathrm{Add}_k(P_{k+3},r-1)$ implies that $T$ has a diametral $k$-twin $\ell$. Lemma~\ref{lem6}(ii) and the inductive hypothesis imply $$\mathrm{mdi}_k(T) \geq \mathrm{mdi}_k(T \setminus \ell) + 2 \geq f_k(n-1) + 2 > f_k(n).$$

\textbf{Statement (iii).} For a fixed $p \geq 3$, we apply induction on $r$; the base case $r = 0$ follows from statement~(i), as $\mathrm{Add}_k(S'_{p,k/2+1},0) = \{S'_{p,k/2+1}\}$. Since $f_k(n) < n - 1$, Lemma~\ref{lem11} implies that $\mathrm{diam}(T) = k + 2$. Let $v$ be the central vertex of $T$.

\textbf{Case 1.}  $T \notin \mathrm{Add}_k(S'_{p,k/2+1},r)$. There are two possibilities:

\textbf{Case 1.1.} $S'_{p,k/2+1} \nsubseteq T$. If $T$ has no branching vertices except $v$, consider its central leaf $\ell$. Clearly, $S'_{p,k/2+1} \nsubseteq T \setminus \ell$; thus, $T \setminus \ell \notin \mathrm{Add}_k(S'_{p,k/2+1},r-1)$. Moreover, either $\mathrm{ecc}(\ell) \leq k$ and $N_k[\ell] = N_k[v]$, or $\mathrm{ecc}(\ell) = k + 1$ and there exists another leaf $\ell'$ with $\mathrm{ecc}(\ell') = k + 1$ and $N_k[\ell] = N_k[\ell']$.  Therefore, $\ell$ is a $k$-twin; the inductive hypothesis and Lemma~\ref{lem6}(i) imply $$\mathrm{mdi}_k(T) \geq \mathrm{mdi}_k(T \setminus \ell)  + 1 > f_k(n-1) + 1 = f_k(n).$$ 

If $T$ has a non-central branching vertex, then it has a $k$-twin leaf by Lemma~\ref{lem8}, and we can apply the same argument. 

\textbf{Case 1.2.} $S'_{p,k/2+1} \subseteq T$. Then, since $T \notin \mathrm{Add}_k(S'_{p,k/2+1},r)$, there exists a diametral $k$-twin $\ell \in V(T)$. Lemma~\ref{lem6}(ii) implies that $$\mathrm{mdi}_k(T) \geq \mathrm{mdi}_k(T \setminus \ell) + 2 \geq f_k(n-1) + 2 > f_k(n).$$

\textbf{Case 2.} $T \in \mathrm{Add}_k(S'_{p,k/2+1},r)$. By Lemma~\ref{lem10}(i), there exists a central leaf $\ell$ of $T$ such that $T \setminus \ell \in \mathrm{Add}_k(S'_{p,k/2+1},r - 1)$. If $\mathrm{ecc}(\ell) \leq k + 1$, then $\mathrm{mdi}^*_k(T,\ell) = 1$, as there is exactly one $k$-MDIS containing $\ell$; hence, $\mathrm{mdi}_k(T) = \mathrm{mdi}_k(T \setminus \ell) + 1$ by Lemma~\ref{lem6}(i) and $\mathrm{mdi}_k(T) = f_k(n-1) + 1 = f_k(n)$. If $\mathrm{ecc}(\ell) = k + 2$, then $T \cong S_{p,k/2+1}$, because $T$ has no diametral $k$-twins, and the statement holds by Lemma~\ref{lem2}.
\end{proof}

\begin{lemma}\label{lem13}
Let $T$ be an $n$-vertex tree of diameter $k + 3$. Then $\mathrm{mdi}_k(T) > f_k(n)$.
\end{lemma}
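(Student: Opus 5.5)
We are in the even-$k$ subsection, so $k$ is even and $\mathrm{diam}(T) = k+3$ has two central vertices $u,v$. We argue by induction on $n$, with base case $T \cong P_{k+4}$. For the base case, Lemma~\ref{lem5}(iii) gives $\mathrm{mdi}_k(P_{k+4}) = k+4$. On the other hand, $f_k(k+4) = k+4 - \lfloor (k+4)/(k/2+1)\rfloor + 1 = k+4-2+1 = k+3$, since $(k+4)/(k/2+1)$ lies in $[2,3)$ for $k \geq 2$. Hence $k+4 > k+3$ and the base case holds.

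For the inductive step we look for a leaf $\ell$ whose deletion keeps the diameter equal to $k+3$ and loses enough. By Lemma~\ref{lem1}(i), $f_k(n) \leq f_k(n-1)+1$, and the induction hypothesis gives $\mathrm{mdi}_k(T\setminus \ell) \geq f_k(n-1)+1$ whenever $\mathrm{diam}(T\setminus\ell) = k+3$. So it suffices to find a leaf $\ell$ with $\mathrm{diam}(T\setminus\ell) = k+3$ and $\mathrm{mdi}_k(T) \geq \mathrm{mdi}_k(T\setminus\ell)+1$.

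We split by the structure of $T$.

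\textbf{Case A: $T$ has a non-central branching vertex.} Lemma~\ref{lem8} applies, because for even $k$ we have $k+3-(k \bmod 2) = k+3$. It yields a $k$-twin leaf $\ell$, and Lemma~\ref{lem6}(i) gives the needed drop of at least $1$. We still have to ensure the diameter is preserved. In the proof of Lemma~\ref{lem8}, $\ell$ is the shorter of two pendant paths at a deepest branching vertex, so deleting it leaves the other path intact. The only problem would be if $\ell$ were the unique diametral leaf on its side. In that case both pendant paths have equal length, and we may delete either one; the other keeps the diameter.

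\textbf{Case B: the only branching vertices are among $u,v$.} Then $T$ is two "spiders" joined by the edge $uv$. If some non-diametral pendant path exists, its leaf $\ell$ has $N_k[\ell]=N_k[u]$ (or $N_k[v]$), or equals $N_k$ of a vertex on a longer path at the same center. We need to check this: the pendant path has length at most $k/2$ from its center, and the diametral paths have length $k/2+1$. So $\ell$ is a $k$-twin, and its removal does not change the diameter; proceed as in Case A. Otherwise all pendant paths are diametral, i.e.\ $T$ consists of $a\ge1$ and $b\ge1$ paths of length $k/2+1$ at $u$ and $v$. If $a\ge2$ (or $b\ge2$), delete a diametral leaf $\ell$. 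If $\ell$ is a $k$-twin, Lemma~\ref{lem6}(ii) even gives a drop of $2$. If not, we use Lemma~\ref{lem4}, or more directly Lemma~\ref{lem3}: we exhibit a $k$-MDIS containing $\ell$ and another diametral leaf on the opposite side, witnessed by $N_k[w]\cap J=\{\ell\}$ for the neighbor $w$ of $\ell$. This gives $\mathrm{mdi}^*_k(T,\ell)\ge1$. If $a=b=1$, then $T$ is a path, which is the base case.

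The main obstacle is the bookkeeping in Case B: confirming that a short pendant leaf is indeed a $k$-twin, and that the induction hypothesis applies. The latter requires $\mathrm{diam}(T\setminus\ell)=k+3$, not merely $n-1$ vertices. We also need $n-1\ge k+4$, which holds unless $T$ is the path. If the twin claim fails in some edge configuration, the fallback is Lemma~\ref{lem3} with an explicit $k$-MDIS built greedily as in the proof of Lemma~\ref{lem4}, which always yields $\mathrm{mdi}^*_k(T,\ell)\geq 1$ for a leaf $\ell$ far enough from other leaves.
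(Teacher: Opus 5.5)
You have a genuine gap in Case B, in the claim that the leaf $\ell$ of every non-diametral pendant path at a central vertex $u$ is a $k$-twin.

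\textbf{Where the claim fails.} The claim is true when $\mathrm{dist}(\ell,u)<k/2$; this is the paper's Case 1.1. It is false when $\mathrm{dist}(\ell,u)=k/2$ and $\ell$ is the only such leaf at $u$. Both of your candidate partners fail here. The vertex $y$ at distance $k/2$ from $u$ on a pendant $(k/2+1)$-path has its leaf neighbour in $N_k[y]$, yet that leaf is at distance $k+1$ from $\ell$. The set $N_k[u]$ also contains that leaf.

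\textbf{Why the fallback also fails.} In this configuration $\mathrm{mdi}^*_k(T,\ell)=0$, which rules out both the twin claim and your greedy fallback.
\begin{itemize}
\item Every $k$-MDIS $J\ni\ell$ must contain each leaf at distance $k/2+1$ from $u$. No other vertex within distance $k$ of such a leaf is at distance more than $k$ from $\ell$.
\item Hence a witness $w$ would have to lie in the component of $T-uv$ containing $v$, at distance exactly $k/2-1$ from $v$.
\item But then $N_k[w]$ contains that whole component. That component must meet $J$, in order to cover the leaves at distance $k/2+1$ from $v$.
\end{itemize}
Lemma~\ref{lem4} does not apply either, since the leaves at distance $k/2+1$ from $u$ lie in $N_{k+1}[\ell]$.

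\textbf{A concrete instance.} Take $k=4$ and let $T$ be $P_8$ with a pendant $P_2$ attached to a central vertex, so $n=10$.
\begin{itemize}
\item The end $\ell$ of the $P_2$ is the only leaf whose deletion keeps the diameter equal to $7$.
\item $\ell$ is not a $4$-twin, and $\mathrm{mdi}_4(T)=\mathrm{mdi}_4(T\setminus\ell)$.
\item $f_4(10)=f_4(9)+1$.
\end{itemize}
So your inductive step gives only $\mathrm{mdi}_4(T)\ge f_4(10)$, not the strict inequality, and no single-leaf deletion can do better.

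\textbf{What is missing.} You need the paper's treatment of this situation (its Cases 1.2 and 1.3).
\begin{itemize}
\item If $u$ carries two pendant $(k/2)$-paths, their leaves are $k$-twins of each other.
\item Otherwise, if some central vertex carries two pendant $(k/2+1)$-paths, delete one entire such path. The diameter is preserved, and Lemma~\ref{lem7}(ii) with $s=k/2+1$ gives a loss of at least $k/2$. Lemma~\ref{lem1}(ii), $f_k(n)=f_k(n-k/2-1)+k/2$, then closes the induction.
\item The two remaining trees, with $n=2k+4$ and $n=3k/2+4$, are checked by direct computation.
\end{itemize}

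\textbf{What is sound.} Case A is the paper's Case 2. Your handling of $B_{a,b,k/2+1}$ is a valid replacement for the paper's appeal to Lemma~\ref{lem2}(iii): delete one diametral leaf, using the all-leaves $k$-MDIS and the neighbour of $\ell$ as witness.

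\textbf{A minor slip.} For $k=2$ you have $(k+4)/(k/2+1)=3$, so $f_2(6)=4$ rather than $k+3$. The base case still holds.
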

\begin{proof}
Let $T$ be an $n$-vertex tree of diameter $k + 3$ with central vertices $u$ and $v$. We apply induction on $n$; the base case $n = k + 4$ holds by Lemma~\ref{lem5}(iii). By Lemma~\ref{lem1}(i), it suffices to prove that there exists an $(n-1)$-vertex tree $T'$ such that $\mathrm{mdi}_k(T) > \mathrm{mdi}_k(T')$ and $\mathrm{mdi}_k(T')  > f_k(n-1)$.

\textbf{Case 1.} $T$ has no non-central  branching  vertices. That is, $u$ and $v$ are adjacent to $\deg(u) - 1$ and $\deg(v) - 1$ simple paths, respectively. Let $s \geq 1$ be the smallest number of vertices in any of these paths. Since $T$ itself is not a simple path, we may assume that $u$ is adjacent to a pendant $s$-path and $\deg(u) \geq 3$.

\textbf{Case 1.1.} $s < k/2$. Let $x$ be an endvertex of a pendant $s$-path adjacent to $u$. Since $u$ is a central vertex of $T$, it is also adjacent to a pendant $(k/2 + 1)$-path containing a vertex $y$ such that $\mathrm{dist}(x,u) = \mathrm{dist}(y,u)$. Then $x$ is a $k$-twin and, by Lemma~\ref{lem6}(i), $\mathrm{mdi}_k(T) > \mathrm{mdi}_k(T \setminus x)$. Note that $\mathrm{diam}(T) = \mathrm{diam}(T \setminus x)$; hence, by the inductive hypothesis, $\mathrm{mdi}_k(T \setminus x) > f_k(n-1)$ and the statement follows.

\textbf{Case 1.2.} $s = k/2$ and $v$ is adjacent to a pendant $(k/2)$-path. We may assume that $\deg(u) \geq \deg(v) \geq 3$.  Suppose that $\deg(u) \geq 4$. If $u$ is adjacent to two distinct pendant $(k/2)$-paths, we can apply the same argument as in Case~1.1. Otherwise, $u$ is adjacent to two distinct pendant $(k/2 + 1)$-paths; let $T^*$ be the tree obtained by removing one of them. By the inductive hypothesis, $\mathrm{mdi}_k(T^*) > f_k(n - k/2 - 1)$. Furthermore, $\mathrm{mdi}_k(T) \geq \mathrm{mdi}_k(T^*) + k/2$ by Lemma~\ref{lem7}, and hence $\mathrm{mdi}_k(T) > f_k(n)$ by Lemma~\ref{lem1}(ii). If $\deg(u) = \deg(v) = 3$, then $|V(T)| = 2k + 4$ and a simple calculation shows that $\mathrm{mdi}_k(T) > 2k + 1= f_k(2k + 4)$.
 
\textbf{Case 1.3.} $s = k/2$ and $v$ is not adjacent to a pendant $(k/2)$-path. If $\deg(u) \geq 4$ or $\deg(v) \geq 3$, we can apply the same argument as in Case~1.2. If $\deg(u) = 3$ and $\deg(v) = 2$, then $|V(T)| = 3k/2 + 4$ and a simple calculation shows that $\mathrm{mdi}_k(T)  > 3k/2 + 2= f_k(3k/2+4)$.

\textbf{Case 1.4.} $s = k/2 + 1$. Since $T$ has no non-central branching vertices, the only possible scenario is $T \cong B_{p_1,p_2,k/2 + 1}$ for some $p_1 \geq 2$ and $p_2 \geq 1$. Hence, the statement follows from Lemma~\ref{lem2}(iii).

\textbf{Case 2.} $T$ has a  non-central branching vertex. By Lemma~\ref{lem8}, it has a $k$-twin leaf $x$. By Lemma~\ref{lem6}(i), $\mathrm{mdi}_k(T) \geq \mathrm{mdi}_k(T \setminus x) + 1$. Since $x$ is a $k$-twin, one can see that $T$ has a diametral path not containing $x$; hence,  $\mathrm{diam}(T \setminus x) = \mathrm{diam}(T)$. By the inductive hypothesis, $\mathrm{mdi}_k(T \setminus x) > f_k(n-1)$; hence, $\mathrm{mdi}_k(T) > f_k(n)$.
\end{proof}

\begin{lemma}\label{lem14}
Let $T$ be an $n$-vertex tree of diameter at least $k + 4$. Then $\mathrm{mdi}_k(T) > f_k(n)$.
\end{lemma}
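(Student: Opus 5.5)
We argue by induction on $n$, with $k$ even and fixed. Since $\mathrm{diam}(T) \geq k+4$, we have $n \geq k+5$. The base case is $T \cong P_{k+5}$, where Lemma~\ref{lem5}(iv) gives $\mathrm{mdi}_k(T) \geq k+6 > f_k(k+5)$. The inductive step uses Lemma~\ref{lem1}(i), which gives $f_k(n) \leq f_k(n-1)+1$. It is therefore enough to find a leaf $u$ of $T$ such that $\mathrm{mdi}_k(T) > \mathrm{mdi}_k(T\setminus u)$ and $\mathrm{mdi}_k(T \setminus u) \geq f_k(n-1)+1$. The second condition would follow from a strict inequality $\mathrm{mdi}_k(T\setminus u) > f_k(n-1)$, whichever lemma supplies it.

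\textbf{Step 1: a non-diametral leaf whose removal keeps the diameter.} Suppose $T$ is not a path. Take a central leaf $u$ as in Lemma~\ref{lem10}, i.e.\ one lying in a largest main subtree that contains a central leaf. Then $\mathrm{diam}(T\setminus u) = \mathrm{diam}(T) \geq k+4$, so the induction hypothesis gives $\mathrm{mdi}_k(T\setminus u) > f_k(n-1)$. It remains to show $\mathrm{mdi}_k(T) > \mathrm{mdi}_k(T\setminus u)$. Lemma~\ref{lem3} gives only $\geq$, so we need $\mathrm{mdi}^*_k(T,u) \geq 1$. If $u$ is a $k$-twin, Lemma~\ref{lem6}(i) gives this directly. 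Otherwise I would use Lemma~\ref{lem4}: if no other leaf lies in $N_{k+1}[u]$, the strict inequality follows. In the remaining case, adapt the construction in the proof of Lemma~\ref{lem4}. Start from $J=\{u\}$ and let $v$ be the neighbour of $u$. The goal is to extend $J$ to a $k$-MDIS with $N_k[v]\cap J=\{u\}$ by adding vertices of $N_{k+1}(v)$. The obstruction is a vertex $x \in N_k(v)$ that is a leaf, hence has no neighbour farther out.

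\textbf{Step 2: handling the obstruction using the large diameter.} This is the step I expect to be the main difficulty. I would choose $u$ differently here. Take a diametral path $P = x_0x_1\dots x_d$ with $d \geq k+4$, and work near the leaf end $x_0$. The candidates are $x_0$ itself, or a leaf hanging from a vertex $x_i$ with $i \leq k/2$.

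If $x_0$ is a diametral $k$-twin, Lemma~\ref{lem6}(ii) gives $\mathrm{mdi}_k(T) \geq \mathrm{mdi}_k(T\setminus x_0)+2$. Since $T\setminus x_0$ has at least $n-1$ vertices of diameter at least $k+3$, Lemma~\ref{lem13} or the induction hypothesis gives $\mathrm{mdi}_k(T\setminus x_0) \geq f_k(n-1)$. Hence $\mathrm{mdi}_k(T) \geq f_k(n-1)+2 > f_k(n)$.

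If no leaf lies in $N_{k+1}[x_0]$ other than $x_0$, then Lemma~\ref{lem4} gives $\mathrm{mdi}_k(T) \geq \mathrm{mdi}_k(T\setminus x_0)+1$. The tree $T\setminus x_0$ has diameter $d-1 \geq k+3$, so again Lemma~\ref{lem13} or the induction hypothesis gives strictness. This is the key point of allowing the diameter to drop: the case $d-1 = k+3$ is exactly covered by Lemma~\ref{lem13}.

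Otherwise some leaf $\ell \neq x_0$ lies within distance $k+1$ of $x_0$. Since $d \geq k+4$, I would show that such a leaf is either a $k$-twin, in the spirit of Lemma~\ref{lem8}, or satisfies the hypothesis of Lemma~\ref{lem4}. Its removal also keeps the diameter, because $\ell \notin P$ or a parallel diametral path exists. Then the induction hypothesis applies.

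\textbf{Step 3: paths.} If $T \cong P_n$ with $n \geq k+5$, remove an end leaf. Lemma~\ref{lem4} applies because the other end is at distance $n-1 \geq k+4$, so $\mathrm{mdi}_k(P_n) > \mathrm{mdi}_k(P_{n-1})$. By induction, or by Lemma~\ref{lem13} when $n-1 = k+4$, we get $\mathrm{mdi}_k(P_{n-1}) > f_k(n-1)$. This gives $\mathrm{mdi}_k(P_n) \geq f_k(n-1)+2 > f_k(n)$.
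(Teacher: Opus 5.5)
There is a genuine gap in the last subcase of Step~2, and it cannot be closed within your framework of deleting a single leaf with a strict drop.

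First, your dichotomy for a leaf $\ell\neq x_0$ with $\mathrm{dist}(x_0,\ell)\le k+1$ collapses. The vertex $x_0$ is itself a leaf in $N_{k+1}[\ell]\setminus\{\ell\}$, so Lemma~4 never applies to $\ell$. You would therefore need $\ell$ to be a $k$-twin, and it need not be one.

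Concretely, take $k=2$ and let $T$ be the path $x_0x_1\cdots x_6$ with an extra leaf $\ell$ attached to $x_2$ and an extra leaf $\ell'$ attached to $x_4$. Then $n=9$ and $\mathrm{diam}(T)=6=k+4$. In this tree, $\mathrm{mdi}^*_2(T,u)=0$ for every leaf $u$:
\begin{itemize}
\item For $u=x_0$: the only vertices $w\neq x_0$ with $x_0\in N_2[w]$ are $x_1$ and $x_2$. Both $N_2[x_1]$ and $N_2[x_2]$ contain $N_2[\ell]=\{\ell,x_1,x_2,x_3\}$, which meets every $2$-MDIS. So no $2$-MDIS $J\ni x_0$ has $N_2[w]\cap J=\{x_0\}$.
\item For $u=\ell$: the same argument works, using $N_2[x_0]$ and $N_2[\ell']$.
\item For $x_6$ and $\ell'$: this follows by symmetry.
\end{itemize}
A direct count confirms $\mathrm{mdi}_2(T)=\mathrm{mdi}_2(T\setminus u)=9$ for each leaf $u$. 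So no leaf $u$ satisfies $\mathrm{mdi}_2(T)>\mathrm{mdi}_2(T\setminus u)$. The bound $\mathrm{mdi}_2(T\setminus u)>f_2(8)$ from induction or Lemma~13 then only gives $\mathrm{mdi}_2(T)\ge f_2(8)+1=f_2(9)$, which is not strict.

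The missing idea is to delete a whole pendant path and gain $k/2$ at once. The paper fixes a branching vertex $w$ carrying pendant paths $P_a,P_b$ with $a\le b$ and $a$ minimal. It then distinguishes three cases:
\begin{itemize}
\item $a+b\le k$, where $a_1$ is a $k$-twin;
\item $a\in\{k/2,k/2+1\}$ with $b=k/2+1$;
\item $b\ge k/2+2$.
\end{itemize}

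Your example falls in the middle case, with $w=x_2$, $a=1$, $b=2$. Deleting $P_b=\{x_0,x_1\}$ gives a tree $T'$ of diameter $5=k+3$. Lemmas~3 and~7 give $\mathrm{mdi}_2(T)\ge\mathrm{mdi}_2(T\setminus x_0)\ge\mathrm{mdi}_2(T')+1$. Lemma~13 gives $\mathrm{mdi}_2(T')>f_2(7)=f_2(9)-1$. In general this case combines $\mathrm{mdi}_k(T)\ge\mathrm{mdi}_k(T')+k/2$ with $f_k(n-k/2-1)=f_k(n)-k/2$ from Lemma~1(ii).

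The case $b\ge k/2+2$ likewise needs more than twins and Lemma~4. The paper finds a vertex $x$ at distance $k+2$ from $b_1$. It then shows that a $k$-MDIS containing $b_1$ and $x$ meets $N_k[b_2]$ only in $b_1$, so $\mathrm{mdi}^*_k(T,b_1)\ge 1$.

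Your first two subcases of Step~2 and your Step~3 are correct. Step~3 also usefully covers long paths, which the paper's branching-vertex case analysis does not spell out.
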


\begin{proof}
We apply induction on $n$. The base case holds by Lemma~\ref{lem5}(iv).  Consider a branching vertex $w \in V(T)$ adjacent to pendant paths $P_a$ and $P_b$ (with $a \leq b$) having endvertices $a_1$ and $b_1$, with respective neighbors $a_2$ and $b_2$ (if $a = 1$ or $b = 1$, the corresponding neighbor coincides with~$w$). We may assume that $w$ is chosen in such a way that the parameter $a$ is minimal. The following cases are possible:

\textbf{Case~1. $a + b \leq k$.} Then there exists a vertex $x$ on $P_b$ such that $N_k[x] = N_k[a_1]$. Note that $\mathrm{diam}(T \setminus a_1) \geq k + 3$. The inductive hypothesis and Lemma~\ref{lem6}(i) imply $$\mathrm{mdi}_k(T) \geq \mathrm{mdi}_k(T \setminus a_1) + 1 > f_k(n - 1) + 1 \geq f_k(n).$$

\textbf{Case~2. $a \in \{k/2,k/2+1\}$, $b = k/2 + 1$.} Consider the tree $T'$ obtained from $T$ by removing $P_b$. If $a = k/2$, then, by Lemma~\ref{lem7}(i), $\mathrm{mdi}_k(T') \leq \mathrm{mdi}_k(T \setminus b_1) - k/2$. Moreover, $\mathrm{diam}(T') \geq k + 3$; hence, $\mathrm{mdi}_k(T') > f_k(n - k/2 - 1) = f_k(n) - k/2$, which implies $\mathrm{mdi}_k(T) > f_k(n)$.  If $a = k/2 + 1$, then, by Lemma~\ref{lem7}(ii), $\mathrm{mdi}_k(T') \leq \mathrm{mdi}_k(T) - k/2$ which implies $\mathrm{mdi}_k(T) > f_k(n)$.

\textbf{Case~3. $b \geq k/2 + 2$.}  First, we show that there exists a vertex $x \in V(T)$ such that $\mathrm{dist}(b_1,x) = k + 2$. Suppose not; then $b_1$ does not belong to any diametral path of~$T$, and every vertex on every diametral path of~$T$ belongs to $N_{k/2}[w]$, which is impossible. Next, consider a $k$-MDIS $J$ containing $b_1$ and $x$. If $x$ belongs to $P_b$, then $N_k[b_2] \cap J = \{b_1\}$. Suppose $x$ does not belong to $P_b$ and there exists a vertex $y \in J \setminus \{b_1,x\}$ such that $y \in N_k[b_2]$. Then $y$ does not belong to the $b_1x$-path (in particular, it does not belong to $P_b$). Therefore, 
$$ \mathrm{dist}(x,y) \leq \mathrm{dist}(x,w) + \mathrm{dist}(y,w) \leq 2 \cdot \mathrm{dist}(x,w) = 2 \cdot (\mathrm{dist}(b_1,x) - \mathrm{dist}(b_1,w)) \leq k.$$

Hence, such a vertex $y$ does not exist, and $N_k[b_2] \cap J = \{b_1\}$, so $\mathrm{mdi}_k^*(T,b_1) \geq 1$ by Lemma~\ref{lem3}. Since $\mathrm{diam}(T \setminus b_1) \geq k + 3$, Lemma~\ref{lem13} and the inductive hypothesis imply $\mathrm{mdi}_k(T \setminus b_1) > f_k(n-1)$; thus, $\mathrm{mdi}_k(T) > f_k(n)$.
\end{proof}

Lemmas~\ref{lem11}--\ref{lem14} immediately imply the following fact:

\begin{theorem}\label{thm1}
For all even $k \geq 2$ and all $n \geq 1$, every $k$-minimal $n$-vertex tree has exactly $f_k(n)$ $k$-MDISs. Furthermore, the following hold:

(i) If $n \leq k + 1$, then all $n$-vertex trees are $k$-minimal; if $n = k + 2$, then the path $P_{k+2}$ is the only $k$-minimal $n$-vertex tree.

(ii) If $k + 3 \leq n \leq 3k/2 + 2$, then $$\mathrm{Add}^*_k(P_{k+2},n-k-2) \cup \mathrm{Add}_k(P_{k+3},n - k - 3)$$ is the class of all $k$-minimal $n$-vertex trees.

(iii) If $n \geq 3k/2 + 3$, then  $\mathrm{Add}_k(S'_{p,k/2 + 1},r)$ is the class of all $k$-minimal $n$-vertex trees, where $p = \lfloor n/(k/2 + 1) \rfloor$ and $r = n\bmod (k/2 + 1)$.
 
\end{theorem}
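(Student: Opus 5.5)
The plan is to assemble Theorem~\ref{thm1} from Lemmas~\ref{lem11}--\ref{lem14} by splitting all $n$-vertex trees according to their diameter. First, I will dispose of small $n$. If $n \le k+1$, every tree has diameter at most $k$. Any two vertices are then within distance $k$, so the $k$-MDISs are exactly the singletons, giving $n = f_k(n)$ for every tree. If $n = k+2$, a tree of diameter at most $k$ again has $n$ $k$-MDISs. The only tree of diameter $k+1$ is $P_{k+2}$, which has $k+1 = f_k(k+2)$ $k$-MDISs by Lemma~\ref{lem5}(i). Larger diameters are impossible on $k+2$ vertices. This yields (i).

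For $n \ge k+3$, Lemmas~\ref{lem13} and~\ref{lem14} show that every tree of diameter at least $k+3$ satisfies $\mathrm{mdi}_k(T) > f_k(n)$, so it is never $k$-minimal. Trees of diameter at most $k$ have $n > f_k(n)$ $k$-MDISs. Hence only diameters $k+1$ and $k+2$ remain, and these are exactly the trees covered by Lemma~\ref{lem12}. The lower bound $\mathrm{mdi}_k(T) \ge f_k(n)$ therefore holds for all $n$-vertex trees once we know equality is attained, which is what the characterizations in Lemma~\ref{lem12} supply.

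Writing $n = p(k/2+1) + r$ with $0 \le r \le k/2$, the cases split by $p$. When $p = 2$ and $r \ge 1$, i.e.\ $k+3 \le n \le 3k/2+2$, Lemma~\ref{lem12}(ii) gives the class $\mathrm{Add}^*_k(P_{k+2},r) \cup \mathrm{Add}_k(P_{k+3},r-1)$ with $r = n-k-2$, which is (ii). When $n \ge 3k/2+3$, we have $p \ge 3$, or $p = 2$ impossible since $r \le k/2$. Then Lemma~\ref{lem12}(i) handles $r = 0$, where $\mathrm{Add}_k(S'_{p,k/2+1},0) = \{S'_{p,k/2+1}\}$, and Lemma~\ref{lem12}(iii) handles $r \ge 1$, giving (iii). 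For the boundary value $n = 3k/2+3 = 3(k/2+1)$ I must check that $p$ and $r$ come out as $\lfloor n/(k/2+1)\rfloor$ and $n \bmod (k/2+1)$, which they do.

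The main obstacle is a bookkeeping point rather than a new argument. The families produced by Lemma~\ref{lem12} must be nonempty, so that the minimum is genuinely $f_k(n)$ and not larger. Nonemptiness holds because $P_{k+3} \in \mathrm{Add}_k(P_{k+3},0)$ and $S'_{p,k/2+1}$ can be extended by attaching $r$ leaves near the center without creating diametral twins. I must also confirm that the diameter ranges in Lemmas~\ref{lem12}--\ref{lem14} together exhaust all trees.
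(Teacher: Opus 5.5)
Your proposal is correct and takes the same approach as the paper, which simply says that Lemmas~\ref{lem11}--\ref{lem14} immediately imply the theorem; your split by diameter, the use of Lemma~\ref{lem5}(i) for $n=k+2$, the reduction $p=\lfloor n/(k/2+1)\rfloor$, $r=n \bmod (k/2+1)$, and the nonemptiness check are exactly the bookkeeping the paper leaves implicit. One wording fix: $\mathrm{mdi}_k(T)\ge f_k(n)$ follows from Lemmas~\ref{lem12}--\ref{lem14} without any extra condition, and nonemptiness of the extremal families is needed only to conclude that the minimum equals $f_k(n)$.
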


For $p \geq 2$, denote by $S''_{p,2}$ the tree obtained from $S'_{p,2}$ by adding one leaf to its central vertex. The next observation follows from Theorem~1.

\begin{corollary}
The following hold:

(i) For $n \in \{1,2,3\}$, the path $P_n$ is the only $n$-vertex $2$-minimal tree;

(ii) For all even $n \geq 4$, the tree $S'_{n/2,2}$ is the only $n$-vertex $2$-minimal tree;

(iii) For all odd $n \geq 5$, there exist two non-isomorphic $n$-vertex $2$-minimal trees: $S_{(n-1)/2,2}$ and $S''_{(n-1)/2,2}$.
\end{corollary}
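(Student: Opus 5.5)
The corollary is a direct specialization of Theorem~\ref{thm1} to $k=2$, where $k/2+1=2$ and $3k/2+2=5$. The plan is to split by the value of $n$ according to the three ranges of the theorem. In each range, the families $\mathrm{Add}^*_2(\cdot,\cdot)$ and $\mathrm{Add}_2(\cdot,\cdot)$ are identified explicitly by listing every way of attaching the required new vertices without changing the diameter. A candidate is then discarded exactly when it acquires a diametral $2$-twin, or a $2$-special pair in the starred case.

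\textbf{Small $n$ and even $n$.} For $n\le 3$, Theorem~\ref{thm1}(i) says that every $n$-vertex tree is $2$-minimal, and the only such tree is $P_n$. For $n=4=k+2$, Theorem~\ref{thm1}(i) gives $P_4$ as the unique $2$-minimal tree, and $P_4\cong S'_{2,2}$. For even $n\ge 6$, Theorem~\ref{thm1}(iii) applies with $p=n/2$ and $r=0$. Since $\mathrm{Add}_2(S'_{p,2},0)=\{S'_{p,2}\}$, this proves (ii).

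\textbf{Odd $n=5$.} Here Theorem~\ref{thm1}(ii) gives the class $\mathrm{Add}^*_2(P_4,1)\cup\mathrm{Add}_2(P_5,0)$.
\begin{itemize}
\item $\mathrm{Add}_2(P_5,0)=\{P_5\}$, and $P_5\cong S_{2,2}$.
\item A $5$-vertex tree of diameter $3$ containing $P_4=abcd$ is obtained by attaching a leaf $e$ to $b$ (or symmetrically to $c$); this tree is $S''_{2,2}$.
\end{itemize}
It remains to check that $S''_{2,2}$ has no $2$-special pair. Its diametral leaves are $a$, $e$ and $d$. We have $N_2[a]=N_2[e]$, while no vertex other than $d$ has closed $2$-neighbourhood $\{b,c,d\}$, so $d$ is not a $2$-twin. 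Hence the only diametral $2$-twins are $a$ and $e$, whose neighbourhoods coincide, so no $2$-special pair exists.

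\textbf{Odd $n\ge 7$.} Theorem~\ref{thm1}(iii) applies with $p=(n-1)/2\ge 3$ and $r=1$, so we must determine $\mathrm{Add}_2(S'_{p,2},1)$. Write $S'_{p,2}$ as a centre $v$ with $p-1$ pendant $2$-paths and one pendant leaf $\ell$. We attach one new leaf and keep the diameter equal to $4$, which rules out attaching it to an end of a $2$-path. The remaining cases are as follows.
\begin{itemize}
\item Attaching to $\ell$ gives $S_{p,2}$.
\item Attaching to $v$ gives $S''_{p,2}$.
\item Attaching to the middle vertex of a $2$-path creates two diametral leaves with a common neighbour. These have equal $N_2$-neighbourhoods, i.e.\ they are diametral $2$-twins, so this tree is excluded.
\end{itemize}
We still verify that $S_{p,2}$ and $S''_{p,2}$ have no diametral $2$-twins. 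For $S_{p,2}$ this is immediate, e.g.\ from Lemma~\ref{lem9}(i) with $k=3$. For $S''_{p,2}$, a diametral leaf $x$ is the end of a $2$-path $xyv$, so $N_2[x]=\{x,y,v\}$. Every other vertex $z$ is within distance $2$ of $v$ and has $N_2[z]\supseteq N_1[v]$, which contains at least $p+1\ge 4$ vertices. Hence no other vertex has the same closed $2$-neighbourhood as $x$.

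Finally, the two trees are non-isomorphic: $S_{p,2}$ has $p$ leaves, while $S''_{p,2}$ has $p+1$ leaves. This proves (iii). The only step needing care is this exhaustive enumeration of one-vertex additions and the twin checks; everything else is read off from Theorem~\ref{thm1}.
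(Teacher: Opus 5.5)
Your proposal takes the same route as the paper: the corollary is Theorem~\ref{thm1} read off at $k=2$. Your enumerations are correct, both of $\mathrm{Add}^*_2(P_4,1)\cup\mathrm{Add}_2(P_5,0)$ for $n=5$ and of $\mathrm{Add}_2(S'_{p,2},1)$ for odd $n\ge 7$; the latter matches $T_1,T_2,T_3$ of Fig.~2.

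One justification is wrong as written. When checking $S''_{p,2}$ for diametral $2$-twins, you claim that every vertex $z\neq x$ satisfies $N_2[z]\supseteq N_1[v]$. This fails for the ends $x'$ of the other pendant $2$-paths, and at least one such end exists since $p\ge 3$. For these, $N_2[x']=\{x',y',v\}$ has only three vertices. The conclusion still holds, because $\mathrm{dist}(x,x')=4$ gives $x'\in N_2[x']\setminus N_2[x]$. The inclusion $N_2[z]\supseteq N_1[v]$ is only valid, and only needed, for $z\in N_1[v]$.

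Similarly, citing Lemma~\ref{lem9}(i) with $k=3$ for $S_{p,2}$ only rules out $3$-twin leaves. It does transfer to $2$-twins, since $N_3[u]=\bigcup_{w\in N_2[u]}N_1[w]$ means every $2$-twin is a $3$-twin, but you should say so. Alternatively, use the direct neighbourhood check you already do for $S''_{p,2}$, which is simpler.
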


Finally, we show that for all even $k$, the number of non-isomorphic $n$-vertex $k$-minimal trees is bounded above by a constant that depends only on $k$. Recall that $t(m)$ denotes the number of non-isomorphic unlabeled $m$-vertex trees.

\begin{theorem}\label{thm2}
For all $n \geq 2$ and all even $k \geq 2$, there exist at most $t(k^2)$ non-isomorphic $n$-vertex $k$-minimal trees.
\end{theorem}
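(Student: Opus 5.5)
Our plan is to use Theorem~\ref{thm1} and split into the three ranges of $n$. In each range we bound the number of $k$-minimal $n$-vertex trees by the number of trees on at most $k^2$ vertices, via an injective ``compression'' map. Throughout, write $h = k/2+1$.

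\textbf{Small $n$.} If $n \leq k+1$, all $n$-vertex trees are $k$-minimal, so there are $t(n) \leq t(k^2)$ of them, since $t$ is nondecreasing and $n \leq k+1 \leq k^2$ for $k \geq 2$. If $k+2 \leq n \leq 3k/2+2$, every $k$-minimal tree lies in $\mathrm{Add}^*_k(P_{k+2},\cdot)\cup\mathrm{Add}_k(P_{k+3},\cdot)$. It therefore has at most $3k/2+2 \leq k^2$ vertices, except for small $k$, which we check directly (for $k=2$, Corollary~1 gives at most two trees, and $t(4)=2$). Hence the count is at most $t(n)\le t(k^2)$.

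\textbf{Large $n$.} By Theorem~\ref{thm1}(iii), the $k$-minimal trees are exactly $\mathrm{Add}_k(S'_{p,h},r)$ with $p=\lfloor n/h\rfloor$ and $r = n \bmod h \leq k/2$. We first describe the structure of such a tree $T$. It has diameter $k+2$, a unique central vertex $v$, and it contains $S'_{p,h}$. It has no diametral $k$-twins, which forces every diametral leaf to sit at the end of a pendant $h$-path whose attachment region near $v$ contains no other extra vertices. Only $r$ extra vertices are added. These can touch at most $r$ of the main subtrees. So all but at most $r+2$ of the main subtrees are bare pendant $h$-paths, or pendant $(h-1)$-paths coming from the deleted leaf of $S'_{p,h}$.

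\textbf{Compression map.} Define the compression of $T$ by deleting bare pendant $h$-paths at $v$ until only a bounded number $c$ of them remain (say $c=k/2$, chosen so that the counts fit). This map is injective on $\mathrm{Add}_k(S'_{p,h},r)$ for fixed $n$, because $n$ determines how many bare paths were removed. The compressed tree consists of
\begin{itemize}
\item at most $r$ modified main subtrees, each with at most $h+r$ vertices;
\item one $(h-1)$-path;
\item $c$ bare $h$-paths;
\item the vertex $v$.
\end{itemize}
Its vertex count is at most about $1 + (r+1+c)h + r$. With $r \leq k/2$ and $h=k/2+1$ this is $O(k^2/4)$, which we will verify is at most $k^2$. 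The compressed trees on $m \leq k^2$ vertices can then be padded to exactly $k^2$ vertices injectively, for instance by attaching a path at a canonical vertex. Alternatively, we can simply note that $\sum_{m\le M} t(m)\le t(M+1)$ (attach a new leaf-root path), so the total is bounded by $t(k^2)$.

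\textbf{Main obstacle.} The hard step is the structural claim: that the diametral no-$k$-twin condition together with only $r$ extra vertices genuinely leaves all but boundedly many main subtrees as bare paths. We must also make the injectivity and padding bookkeeping land exactly within $t(k^2)$ rather than a slightly larger constant. We would first try the crude counting argument, and only tighten the constants if it does not fit.
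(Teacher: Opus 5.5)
Your overall strategy is the paper's: for large $n$, apply Theorem~\ref{thm1}(iii), delete bare pendant $(k/2+1)$-paths at the unique central vertex, and compare with $t(k^2)$. The structural step you flag as the main obstacle is actually immediate. The $r$ added vertices lie in at most $r$ main subtrees, so at least $p-1-r$ of the pendant $(k/2+1)$-paths of the embedded $S'_{p,k/2+1}$ stay bare, and no twin argument is needed. The real gaps are in the bookkeeping.

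\textbf{Varying image sizes.} Deleting paths ``until only $c$ remain'' makes the number of deleted paths depend on $T$, not just on $n$: the number of bare paths ranges from $p-1-r$ up to $p$. So your injectivity justification is wrong as stated. Injectivity can be rescued, since $(n-|V(F(T))|)/(k/2+1)$ recovers the number deleted. But then the images have several different sizes, and neither of your merging devices works:
\begin{itemize}
\item Padding by attaching a path is not injective; for example, $P_3$ and $P_2$ padded at an end vertex both become $P_5$.
\item The inequality $\sum_{m\le M}t(m)\le t(M+1)$ is false for small $M$; for instance $\sum_{m\le 8}t(m)=48>47=t(9)$.
\end{itemize}
The paper avoids this by deleting exactly $p-r-3$ bare paths, a number determined by $n$ alone. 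At least two bare paths survive, so the center, the diameter and the reconstruction are preserved. Every image then has exactly $(r+3)(k/2+1)+r$ vertices, and only the monotonicity of $t$ is needed.

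\textbf{The size bound at $k=4$.} The size estimate does not always land at or below $k^2$. Even with the most economical compression, $(r+3)(k/2+1)+r=17>16=k^2$ for $(k,r)=(4,2)$. Your own estimate $1+(r+1+c)(k/2+1)+r$ with $c=k/2$ gives $18$ there. Since $t(17)>t(16)$, the crude count genuinely fails in this case, and a separate argument is required. The paper checks directly that $\mathrm{Add}_4(S'_{5,3},2)$ has fewer than $t(16)=19320$ members.

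\textbf{The case $k=2$.} You should invoke Corollary~1 for all $n$, not only in the middle range. With $c=k/2=1$, the compression would not leave two bare paths.
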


\begin{proof}
If $k = 2$ or $n \leq k^2$, the statement holds trivially; let $k \geq 4$ and $n = p \cdot (k/2 + 1) + r$ for some $p \geq k + 1$ and $0 \leq r \leq k/2$. Our aim is to prove that $|\mathrm{Add}_k(S'_{p,k/2+1},r)| \leq t(k^2)$. Observe that every tree from $\mathrm{Add}_k(S'_{p,k/2+1},r)$ has a unique central vertex and at least $p - r - 1 \geq 2$ pendant $(k/2+1)$-paths connected to it. Consider a function $$F: \mathrm{Add}_k(S'_{p,k/2+1},r) \longrightarrow \mathrm{Add}_k(S'_{r + 3,k/2+1},r)$$ that maps a tree $T \in \mathrm{Add}_k(S'_{p,k/2+1},r)$ to a tree $F(T)$ with $p - r - 3$ fewer pendant $(k/2+1)$-paths. Observe that $T$ and $F(T)$ have the same central vertex; thus, it is easy to see that $F$ is an injection. If $(k,r) \neq (4,2)$, then $(r + 3) \cdot (k/2 + 1) + r < k^2$ and $$|\mathrm{Add}_k(S'_{p,k/2+1},r)| \leq |\mathrm{Add}_k(S'_{r + 3,k/2+1},r)| \leq t(k^2).$$
Finally, if $(k,r) = (4,2)$, then it is straightforward to check that the family $\mathrm{Add}_4(S'_{5,3},2)$ contains less than $t(16) = 19320$ non-isomorphic trees. This completes the proof.
\end{proof}

\begin{figure}[h!]
\center
\begin{tikzpicture}
  [scale=1,auto=left,every node/.style={circle}]

\node[fill = black,inner sep=2.5pt] (b11) at (1,1) {};
\node[fill = black,inner sep=2.5pt] (b28) at (0,0) {};
\node[fill = black,inner sep=2.5pt] (b38) at (0,-1) {};
\node[fill = black,inner sep=2.5pt] (b48) at (0,-2) {};
\node[fill = black,inner sep=2.5pt] (b29) at (0.5,0) {};
\node[fill = black,inner sep=2.5pt] (b39) at (0.5,-1) {};
\node[fill = black,inner sep=2.5pt] (b49) at (0.5,-2) {};
\node[fill = black,inner sep=2.5pt] (b20) at (1,0) {};
\node[fill = black,inner sep=2.5pt] (b30) at (1,-1) {};
\node[fill = black,inner sep=2.5pt] (b40) at (1,-2) {};
\node[fill = black,inner sep=2.5pt] (b21) at (1.5,0) {};
\node[fill = black,inner sep=2.5pt] (b31) at (1.5,-1) {};
\node[fill = black,inner sep=2.5pt] (b41) at (1.5,-2) {};
\node[fill = black,inner sep=2.5pt] (b22) at (2,0) {};
\node[fill = black,inner sep=2.5pt] (b32) at (2,-1) {};
\node[fill = gray,inner sep=1.75pt] (b10) at (2,0.5) {};
\node[fill = gray,inner sep=1.75pt] (b12) at (2,1) {};

\node[fill = black,inner sep=2.5pt] (c11) at (9,1) {};
\node[fill = black,inner sep=2.5pt] (c28) at (8,0) {};
\node[fill = black,inner sep=2.5pt] (c38) at (8,-1) {};
\node[fill = black,inner sep=2.5pt] (c48) at (8,-2) {};
\node[fill = black,inner sep=2.5pt] (c29) at (8.5,0) {};
\node[fill = black,inner sep=2.5pt] (c39) at (8.5,-1) {};
\node[fill = black,inner sep=2.5pt] (c49) at (8.5,-2) {};
\node[fill = black,inner sep=2.5pt] (c20) at (9,0) {};
\node[fill = black,inner sep=2.5pt] (c30) at (9,-1) {};
\node[fill = black,inner sep=2.5pt] (c40) at (9,-2) {};
\node[fill = black,inner sep=2.5pt] (c21) at (9.5,0) {};
\node[fill = black,inner sep=2.5pt] (c31) at (9.5,-1) {};
\node[fill = black,inner sep=2.5pt] (c41) at (9.5,-2) {};
\node[fill = gray,inner sep=1.75pt] (c12) at (11,-1) {};
\node[fill = black,inner sep=2.5pt] (c22) at (10.5,0) {};
\node[fill = black,inner sep=2.5pt] (c32) at (10.5,-1) {};
\node[fill = gray,inner sep=1.75pt] (c42) at (10,-1) {};

\node[fill = black,inner sep=2.5pt] (a11) at (5,1) {};
\node[fill = black,inner sep=2.5pt] (a28) at (4,0) {};
\node[fill = black,inner sep=2.5pt] (a38) at (4,-1) {};
\node[fill = black,inner sep=2.5pt] (a48) at (4,-2) {};
\node[fill = black,inner sep=2.5pt] (a29) at (4.5,0) {};
\node[fill = black,inner sep=2.5pt] (a39) at (4.5,-1) {};
\node[fill = black,inner sep=2.5pt] (a49) at (4.5,-2) {};
\node[fill = black,inner sep=2.5pt] (a20) at (5,0) {};
\node[fill = black,inner sep=2.5pt] (a30) at (5,-1) {};
\node[fill = black,inner sep=2.5pt] (a40) at (5,-2) {};
\node[fill = black,inner sep=2.5pt] (a21) at (5.5,0) {};
\node[fill = black,inner sep=2.5pt] (a31) at (5.5,-1) {};
\node[fill = black,inner sep=2.5pt] (a41) at (5.5,-2) {};
\node[fill = black,inner sep=2.5pt] (a22) at (6,0) {};
\node[fill = black,inner sep=2.5pt] (a32) at (6,-1) {};
\node[fill = gray,inner sep=1.75pt] (a23) at (6.5,0) {};
\node[fill = gray,inner sep=1.75pt] (a33) at (6.5,-1) {};

\foreach \from/\to in {b11/b28,b28/b38,b38/b48, b11/b29, b29/b39,b39/b49,b11/b20,b20/b30,b30/b40,b11/b21,b21/b31,b31/b41,b11/b22,b22/b32,b11/b10,b11/b12,c11/c28,c28/c38,c38/c48, c11/c29, c29/c39, c39/c49, c11/c20,c20/c30,c30/c40,c12/c22,c21/c31,c31/c41,c11/c22,c11/c21,c22/c32,c21/c42,a11/a28,a28/a38,a38/a48,a11/a29,a29/a39,a39/a49,a11/a20,a20/a30,a30/a40,a11/a21, a21/a31, a31/a41, a11/a22, a22/a32, a11/a23, a23/a33}
    \draw[thick] (\from) -- (\to);

\end{tikzpicture}

\caption{\small  Examples of trees from the family $\mathrm{Add}_4(S'_{5,3},2)$.}
\end{figure}
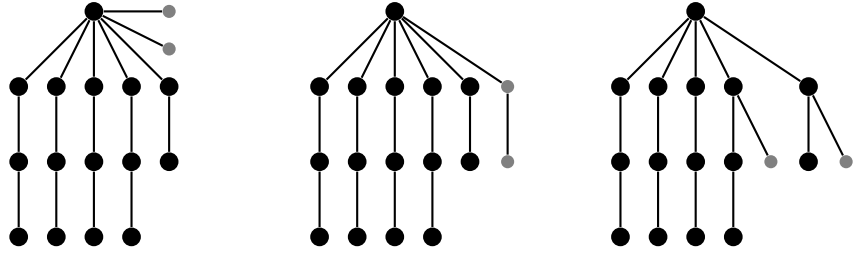

\subsection{Odd $k \geq 3$}

Recall that every $n$-vertex tree with diameter at most $k$ (in particular, with at most $k + 1$ vertices) has exactly $n$ $k$-MDISs.

\begin{lemma}\label{lem15}
Let $T$ be an $n$-vertex tree of diameter $k + 1$ with $n = p \cdot (\lfloor k/2 \rfloor+1) + r + 1$, where $p \geq 2$ and $0 \leq r \leq \lfloor k/2 \rfloor$. Then $\mathrm{mdi}_k(T) \geq f_k(n)$ with equality if and only if $T \in \mathrm{Add}_k^*(S_{p,\lfloor k/2 \rfloor+1},r)$.
\end{lemma}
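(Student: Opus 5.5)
We argue by induction on $r$ for a fixed $p \geq 2$, mirroring the proof of Lemma~\ref{lem12}(iii). Write $s = \lfloor k/2 \rfloor + 1 = (k+1)/2$, so a tree of diameter $k+1 = 2s$ has a unique central vertex $v$, and every diametral leaf is at distance exactly $s$ from $v$. Note that $n - 1 = ps + r$, so by Lemma~\ref{lem1}, $f_k(n) = f_k(n-1)+1$ whenever $1 \leq r \leq \lfloor k/2 \rfloor$. This is the step used to pass from $r-1$ to $r$.

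\textbf{Base case $r = 0$.} Here $n = ps+1$ and $\mathrm{Add}^*_k(S_{p,s},0) = \{S_{p,s}\}$. Equality for $S_{p,s}$ is Lemma~\ref{lem2}(ii). For the inequality with $T \ncong S_{p,s}$, Lemma~\ref{lem9}(i) gives a $k$-twin leaf $\ell$, and we want $\mathrm{mdi}_k(T \setminus \ell) \geq f_k(n-1)$. Here $n - 1 = ps$, which is in the "divisible" residue class, where we would need a bound for diameter $\le k+1$ at $r = s-1$ with $p-1$. To avoid this circularity, I would instead induct on $n$ over all trees of diameter $k+1$ simultaneously, with the statement phrased for all residues: for $n = ps + r + 1$ with $0 \leq r \leq s-1 = \lfloor k/2 \rfloor$ (which covers every residue).

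\textbf{Inductive step.} Let $T$ have diameter $k+1$ on $n$ vertices. We split into three cases.
\begin{itemize}
\item \emph{$T$ has a $k$-special pair.} If a diametral leaf $\ell$ is a $k$-twin and its twin is a non-leaf or another diametral leaf, we use Lemma~\ref{lem6}. When $\mathrm{diam}(T \setminus \ell) = k+1$ still holds, we get $\mathrm{mdi}_k(T) \geq \mathrm{mdi}_k(T\setminus\ell)+1$, and then the strictness must come from a second MDIS. The analogue of Lemma~\ref{lem6}(ii) for diameter $k+1$ fails as stated (it needs diameter $\geq k+2$). So I would argue directly: with the special pair $(x,y)$, we have $N_k[x] = N_k[x']$ for some $x' \neq y$, and we exhibit two distinct MDISs $J \ni x$ (one containing $y$, one containing a neighbour of $y$'s twin structure) with $N_k[x'] \cap J = \{x\}$, giving $\mathrm{mdi}^*_k(T,x) \geq 2$ and hence $\mathrm{mdi}_k(T) > f_k(n)$.
\item \emph{$S_{p,s} \nsubseteq T$.} Then $T$ has a non-diametral $k$-twin leaf, either via Lemma~\ref{lem8} or because a short pendant path at $v$ has its leaf twinned with a vertex on a longer path. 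We remove it, keeping the diameter, and conclude $\mathrm{mdi}_k(T) \geq \mathrm{mdi}_k(T\setminus \ell)+1 > f_k(n-1)+1 \geq f_k(n)$. This requires the induction hypothesis to give strict inequality for $T \setminus \ell$, which holds since $S_{p,s} \nsubseteq T\setminus\ell$ (and in the $r=0$ boundary, $T\setminus \ell$ has $ps$ vertices with at most $p-1$ full $s$-paths, so it falls in class $p-1$, $r = s-1$, and is not in $\mathrm{Add}^*_k(S_{p-1,s},s-1)$ unless the counts work out; this is the step to check carefully).
\item \emph{$T \in \mathrm{Add}^*_k(S_{p,s},r)$.} By Lemma~\ref{lem10}(ii), there is a central leaf $\ell$ with $T\setminus\ell \in \mathrm{Add}^*_k(S_{p,s},r-1)$. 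Since $\mathrm{ecc}(\ell) \leq k$ (otherwise $T$ would be $S_{p,s}$ plus diametral leaves, contradicting $r$ minimal placement), $\ell$ lies in exactly one MDIS, namely the one containing a vertex of $N_k[\ell]$ that covers everything. Lemma~\ref{lem6}(i) then gives $\mathrm{mdi}_k(T) = f_k(n-1)+1 = f_k(n)$.
\end{itemize}

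The main obstacle is bookkeeping the residue boundary (from $r=0$ at $p$ down to $r=s-1$ at $p-1$) and the $k$-special-pair case. For the latter, I would first try the explicit count: every MDIS of a diameter-$(k+1)$ tree with unique center is either a singleton non-diametral vertex or a set of diametral leaves in distinct branches at $v$. This gives the formula $\mathrm{mdi}_k(T) = (n - L) + \#\{\text{transversal sets}\}$, where $L$ is the number of diametral leaves. This directly yields both the bound and the equality case (all branches containing a diametral leaf contain exactly one), bypassing induction altogether. If that formula holds, the whole lemma reduces to minimizing $n - L + \prod_i \ell_i$ subject to structural constraints, and I would prefer this route.
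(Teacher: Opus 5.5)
Neither of your two routes is a complete proof as written. Both can be repaired, and the second one, once finished, is a better proof than the paper's.

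\textbf{The inductive route has a false step and an unresolved one.}

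\emph{The equality case.} You claim that the central leaf $\ell$ supplied by Lemma~\ref{lem10}(ii) satisfies $\mathrm{ecc}(\ell)\le k$. The parenthetical reason is not an argument, and the claim is false. Take $k=3$ (so $s=2$), and let $T_0'$ be $P_5$ with one extra leaf attached to a neighbour of an endpoint. Then $T_0'\in\mathrm{Add}^*_3(S_{2,2},1)$, yet all three of its leaves are diametral. Trees in $\mathrm{Add}^*_k(S_{p,s},r)$ may carry surplus diametral leaves, provided they all lie in one branch at the centre $v$. The missing argument, which is what the paper does, runs as follows:
\begin{itemize}
\item if $\ell$ is diametral, the absence of $k$-special pairs means at most one branch at $v$ has two or more diametral leaves;
\item the largest-main-subtree choice puts $\ell$ in that branch, so $\ell$ is a $k$-twin;
\item the only $k$-MDIS containing $\ell$ is $\ell$ together with the unique diametral leaf of each other branch, so $\mathrm{mdi}^*_k(T,\ell)=1$.
\end{itemize}

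\emph{The $r=0$ step.} The step you flag is resolved wrongly. Let $T_0$ be obtained from $T_0'$ by adding a third leaf at the same vertex, so $n=7$, $p=3$, $r=0$ and $S_{3,2}\nsubseteq T_0$. Deleting a twin leaf gives $T_0'\in\mathrm{Add}^*_3(S_{2,2},1)$, so the induction hypothesis yields no strict inequality. None is needed: $s\mid n-1$ gives $f_k(n)=f_k(n-1)$ by Lemma~\ref{lem1}(i), so $\mathrm{mdi}_k(T)\ge f_k(n-1)+1>f_k(n)$. The paper instead removes $s$ twin leaves and uses Lemma~\ref{lem1}(ii).

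\emph{Smaller points.} The same $T_0$ shows that in the case $S_{p,s}\nsubseteq T$ there need not be a non-diametral twin leaf. Any $k$-twin leaf works, because a diametral one has its twin in its own branch, so deleting it keeps the diameter. In the special-pair case, the second $k$-MDIS should contain the twin of $y$, not a neighbour of it.

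\textbf{The explicit count is correct and gives a shorter proof, but you stopped before the decisive step.} For odd $k$ and diameter $2s$, every vertex at distance less than $s$ from $v$ has closed $k$-neighbourhood equal to $V(T)$. So indeed $\mathrm{mdi}_k(T)=(n-L)+\prod_{i=1}^m\ell_i$, where $m\ge 2$ branches at $v$ contain $\ell_i\ge 1$ diametral leaves each.

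Each of these $m$ branches contains $s$ vertices of a diametral path, so $ms\le n-1$, i.e.\ $m\le p$. Writing $\ell_i=1+a_i$, we have $\prod(1+a_i)\ge 1+\sum a_i$, with equality iff at most one $a_i$ is positive. Hence
\[
\mathrm{mdi}_k(T)\ \ge\ n+1-m\ \ge\ n+1-p\ =\ f_k(n).
\]

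Equality forces two conditions:
\begin{itemize}
\item $m=p$, which is equivalent to $S_{p,s}\subseteq T$, because the $p$ leaves of any $S_{p,s}$-copy are pairwise at distance $2s$, hence diametral and in distinct branches;
\item at most one $\ell_i\ge 2$, which is equivalent to the absence of $k$-special pairs, because a diametral leaf is a $k$-twin iff its branch has $\ell_i\ge 2$, and diametral leaves in different branches have different closed $k$-neighbourhoods.
\end{itemize}

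This settles the bound and the characterization at once, without induction and without Lemmas~\ref{lem6}, \ref{lem9} and~\ref{lem10}. The paper's inductive proof is longer, but it follows the leaf-deletion template the paper reuses for diameters $k+2$ and at least $k+3$, where no closed formula is available.
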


\begin{proof}
We apply induction on $n$. The base case $n = k + 2$ holds trivially, as the only $(k + 2)$-vertex tree of diameter at least $k + 1$ is isomorphic to $S_{2,\lfloor k/2 \rfloor+1}$. For the inductive step, consider two cases:

\textbf{Case 1.} $r = 0$. Recall that $\mathrm{Add}_k^*(S_{p,\lfloor k/2 \rfloor+1},0) = \{S_{p,\lfloor k/2 \rfloor+1}\}$ and, moreover, $\mathrm{mdi}_k(S_{p,\lfloor k/2 \rfloor+1}) = f_k(n)$. Consider a tree $T \ncong S_{p,\lfloor k/2 \rfloor + 1}$. Lemma~\ref{lem9}(i) implies that $T$ is $(\lfloor k/2 \rfloor+1)$-reducible to some tree $T^*$.  Lemma~\ref{lem6}(i) and the inductive hypothesis imply $$\mathrm{mdi}_k(T) \geq \mathrm{mdi}_k(T^*) + \lfloor k/2 \rfloor + 1 \geq f_k(n - \lfloor k/2 \rfloor - 1) + \lfloor k/2 \rfloor + 1 > f_k(n).$$

\textbf{Case 2.} $1 \leq r \leq \lfloor k/2 \rfloor$.

\textbf{Case 2.1.} $T \in \mathrm{Add}_k^*(S_{p,\lfloor k/2 \rfloor+1},r)$. Consider a central leaf $\ell \in V(T)$ such that the main subtree $T_{\ell}$ containing $\ell$ has the maximum possible number of vertices. Since $r > 0$, $T_{\ell}$ is not a pendant $(\lfloor k/2 \rfloor+1)$-path; thus, $\ell$ is a $k$-twin in $T$. Furthermore, by Lemma~\ref{lem10}(ii), $T \setminus \ell \in \mathrm{Add}_k^*(S_{p,\lfloor k/2 \rfloor+1},r-1)$. It remains to prove that $\mathrm{mdi}^*_k(T,\ell) = 1$. If $\ell$ is not a diametral leaf, the singleton $\{\ell\}$ is the unique $k$-MDIS containing~$\ell$, and the equality follows from Lemma~\ref{lem6}(i). If $\ell$ is a diametral leaf, then because $T$ contains no $k$-special pairs, each of its main subtrees except possibly $T_{\ell}$ contains at most one diametral leaf. Thus, there exists a unique $k$-MDIS containing $\ell$ (which also contains all diametral leaves of $T$ not in $T_{\ell}$), and the equality $\mathrm{mdi}^*_k(T,\ell) = 1$ follows from Lemma~\ref{lem6}(i).

\textbf{Case 2.2.} $T \notin \mathrm{Add}_k^*(S_{p,\lfloor k/2 \rfloor+1},r)$. If $T$ has a $k$-special pair $(x,y)$, then \mbox{$\mathrm{mdi}^*_k(T,x) \geq 2$}; hence, Lemma~\ref{lem3} and the inductive hypothesis imply $$\mathrm{mdi}_k(T) \geq \mathrm{mdi}_k(T \setminus x) + 2 \geq f_k(n-1) + 2 > f_k(n).$$ 

If $T$ does not have a $k$-special pair, then $S_{p,\lfloor k/2 \rfloor+1} \nsubseteq T$.  By Lemmas~\ref{lem9}(i) and~\ref{lem6}(i), $T$ has a $k$-twin leaf $\ell'$ such that $\mathrm{mdi}_k(T) > \mathrm{mdi}_k(T \setminus \ell')$. Clearly, $S_{p,\lfloor k/2 \rfloor+1} \nsubseteq T \setminus \ell'$ and thus $T \setminus \ell' \notin \mathrm{Add}_k^*(S_{p,\lfloor k/2 \rfloor+1},r-1)$. Therefore, the inductive hypothesis implies
$$\mathrm{mdi}_k(T) \geq \mathrm{mdi}_k(T \setminus \ell') + 1 > f_k(n-1) + 1 = f_k(n).$$ 
\end{proof}

\begin{lemma}\label{lem16}
Let $T$ be an $n$-vertex tree of diameter $k + 2$. The following hold:

(i) If $n = p \cdot (\lfloor k/2 \rfloor + 1) + 1$, then $\mathrm{mdi}_k(T) > f_k(n)$.

(ii)  If $n = p \cdot (\lfloor k/2 \rfloor + 1) + 1 + r$, where $r \in \big[\lfloor k/2 \rfloor\big]$, then $\mathrm{mdi}_k(T) \geq f_k(n)$, with equality if and only if $T \in \mathrm{Add}_k(\mathcal{B}_{p,\lfloor k/2 \rfloor+1},r-1)$.
\end{lemma}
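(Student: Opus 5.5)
We will prove both statements together by induction on $n$, where $k \geq 3$ is odd and $s = \lfloor k/2 \rfloor + 1 = (k+1)/2$, so that $k+2 = 2s+1$. The base case is $n = k+3 = 2s+1$, which is the path $P_{k+3}$. Here $n = 2s+1$ falls under (i) with $p = 2$, and Lemma~\ref{lem5}(ii) gives $\mathrm{mdi}_k(P_{k+3}) = k+2$. Since $f_k(k+3) = (k+3) - \lfloor (k+2)/s \rfloor + 1 = k+2$, this would give equality rather than strict inequality. So before anything else I will recheck the intended range of $p$ in (i); presumably $p \geq 3$ is implicit, or the base should start at the smallest $n$ of diameter $k+2$ with $p \geq 3$. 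I will treat (i) as requiring $p \geq 3$, and the induction will run over all $n \geq 2s+2$.

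\textbf{Statement (i).} Let $n = ps + 1$. Since $f_k(n) = f_k(n-1) + 1$ need not hold here, I will use Lemma~\ref{lem1}(i) to check whether $s$ divides $n-1 = ps$. It does, so $f_k(n) = f_k(n-1)$. It therefore suffices to find a leaf $\ell$ with $\mathrm{diam}(T\setminus\ell) = k+2$ and $\mathrm{mdi}_k(T) \geq \mathrm{mdi}_k(T\setminus \ell) + 1$, and to know $\mathrm{mdi}_k(T\setminus \ell) \geq f_k(n-1)$. The latter follows from (ii) at the previous size: $n - 1 = (p-1)s + 1 + (s-1)$, with $r = s-1 = \lfloor k/2\rfloor$. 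If $T$ has a $k$-twin leaf, Lemma~\ref{lem6}(i) gives the required strict decrease, and as in Lemma~\ref{lem13} Case~2 the twin can be chosen so that the diameter is preserved. If $T$ has no $k$-twin leaves, Lemma~\ref{lem9}(ii) forces $T \in \mathcal{B}_{p',s}$, with $p's + 2 = n = ps+1$, which is impossible modulo $s$ for $s \geq 2$. Hence a twin leaf always exists, and (i) follows.

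\textbf{Statement (ii), lower bound and characterisation.} Let $n = ps + 1 + r$ with $1 \leq r \leq s-1$; now $f_k(n) = f_k(n-1)+1$ unless $r = 1$. I will split into three cases.

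\emph{First case: $r = 1$, i.e. $n = ps+2$.} If $T$ has no $k$-twin leaf, Lemma~\ref{lem9}(ii) gives $T \in \mathcal{B}_{p,s} = \mathrm{Add}_k(\mathcal{B}_{p,s},0)$, and Lemma~\ref{lem2}(iii) gives equality. Otherwise remove a $k$-twin leaf $\ell$ keeping the diameter. By (i), $\mathrm{mdi}_k(T\setminus\ell) > f_k(n-1)$, so $\mathrm{mdi}_k(T) \geq f_k(n-1) + 2 > f_k(n)$.

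\emph{Second case: $r \geq 2$ and $T \in \mathrm{Add}_k(\mathcal{B}_{p,s},r-1)$.} I will mimic Case~2 of Lemma~\ref{lem12}(iii) and Case~2.1 of Lemma~\ref{lem15}. Take a central leaf $\ell$ whose main subtree is largest. An analogue of Lemma~\ref{lem10} for $\mathcal{B}$ gives $T\setminus\ell \in \mathrm{Add}_k(\mathcal{B}_{p,s},r-2)$. I will then show $\mathrm{mdi}^*_k(T,\ell) = 1$: either $\ell$ is non-diametral and lies in a unique $k$-MDIS, or $\ell$ is diametral and the absence of diametral twins forces uniqueness. This gives equality by induction.

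\emph{Third case: $r \geq 2$ and $T$ is not in the family.} Either $T$ has a diametral $k$-twin, in which case Lemma~\ref{lem6}(ii) with induction gives $\mathrm{mdi}_k(T) \geq f_k(n-1)+2$; or $T$ contains no member of $\mathcal{B}_{p,s}$, in which case Lemmas~\ref{lem8}/\ref{lem9} provide a $k$-twin leaf $\ell'$ whose removal keeps $T\setminus\ell'$ outside the family. Induction then gives $\mathrm{mdi}_k(T\setminus\ell') \geq f_k(n-1)+1$, hence strict inequality.

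\textbf{Main obstacle.} The hardest step will be the $\mathcal{B}$-analogue of Lemma~\ref{lem10}. One must show that deleting the chosen central leaf preserves both the diameter and some $\mathcal{B}_{p,s}$-subtree. Unlike the star case there are two central vertices, and the subtree's split $(p_1,p_2)$ may need to change. I would handle this by case analysis on $\mathrm{ecc}(\ell)$, as in Lemma~\ref{lem10}, using that $T$ has no diametral twins, so every main subtree hanging at a central vertex contributes at most one diametral leaf.
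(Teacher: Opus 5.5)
Your proof is correct. For (ii) it is essentially the paper's argument. The base $r=1$ goes through Lemma~\ref{lem9}(ii) and (i) at $n-1$. For $r\ge 2$ you split three ways: a diametral $k$-twin (Lemma~\ref{lem6}(ii)); a twin leaf whose deletion stays outside the family; or, inside the family, deletion of one non-diametral leaf with $\mathrm{mdi}^*_k=1$. In that last case the paper takes an arbitrary $k$-twin leaf rather than a central one.

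The genuine difference is in (i). The paper proves (i) by its own induction on $p$. It deletes $\lfloor k/2\rfloor$ twin leaves, landing either in $\mathcal{B}_{p-1,s}$ or in a tree admitting one more twin deletion. It then combines Lemma~\ref{lem1} with the inductive hypothesis and Lemma~\ref{lem15}, the latter to allow for the diameter dropping to $k+1$. You instead run a single induction on $n$ interleaving (i) and (ii). You use one twin-leaf deletion, the lower bound of (ii) at $ps=(p-1)s+1+(s-1)$, and $f_k(ps+1)=f_k(ps)$ from Lemma~\ref{lem1}(i). This is shorter and needs no Lemma~\ref{lem15}. The reason is that deleting a $k$-twin leaf never lowers a diameter $\ge k+2$: the twin of a diametral twin leaf is itself a diametral leaf, as in the proof of Lemma~\ref{lem6}(ii). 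The paper's ordering has the advantage that (i) does not depend on (ii).

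A few corrections.

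\textbf{The base case.} Your worry here is an arithmetic slip: $k+3=2s+2$, not $2s+1$. So $P_{k+3}\cong B_{1,1,s}$ falls under (ii) with $p=2$, $r=1$, where equality is exactly what is claimed. Statement (i) with $p=2$ concerns $(k+2)$-vertex trees of diameter $k+2$, of which there are none. The statement needs no reinterpretation.

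\textbf{The value of $f_k$.} In fact $f_k(n)=f_k(n-1)+1$ for every $r\in[\lfloor k/2\rfloor]$, including $r=1$, since $s\nmid ps+r$. Your $r=1$ estimate is unaffected.

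\textbf{The ``main obstacle''.} This step is easy. Every leaf of a $B_{p_1,p_2,s}$-subtree of $T$ is a diametral leaf of $T$. Hence deleting any non-diametral leaf keeps that same subtree and the diameter, and creates no diametral twins. No change of split and no Lemma~\ref{lem10}-style case analysis is needed.

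Your central leaf $\ell$ is non-diametral. Otherwise all leaves would be diametral; since each main subtree contains at most one diametral leaf, $T$ would be some $B_{q_1,q_2,s}$, forcing $n\equiv 2\pmod s$, which is impossible here.

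Moreover, $\ell$ is a $k$-twin. If $\mathrm{ecc}(\ell)\le k$, it is a twin of its nearer central vertex. Otherwise it is a twin of the vertex at distance $s-1$ from that central vertex on an $s$-path of the subtree. This gives the inequality $\mathrm{mdi}^*_k(T,\ell)\ge 1$, which your sketch leaves implicit; uniqueness of the $k$-MDIS containing $\ell$ only gives $\le 1$.
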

\begin{proof}
\textbf{Statement (i).} We apply induction on $p$. The base case $p = 2$ holds trivially, as there are no $(k + 2)$-vertex trees of diameter $k + 2$. For $p \geq 3$, observe that $T \notin \mathcal{B}_{p,\lfloor k/2 \rfloor + 1}$. By Lemma~\ref{lem9}, every subtree of $T$ that has diameter $k + 2$ and contains no $k$-twin leaves has at most $|V(T)| - \lfloor k/2 \rfloor$ vertices. Thus, by Lemma~\ref{lem8}, $T$ is $\lfloor k/2 \rfloor$-reducible to some tree $T^*$. If $T^* \in \mathcal{B}_{p-1,\lfloor k/2 \rfloor+1}$, then Lemmas~\ref{lem6}(i) and~\ref{lem1}(i) imply $$\mathrm{mdi}_k(T) \geq \mathrm{mdi}_k(T^*) + \lfloor k/2 \rfloor = f_k(n - \lfloor k/2 \rfloor) + \lfloor k/2 \rfloor  > f_k(n).$$ Otherwise, by Lemma~\ref{lem9}, there exists a tree $T^{**}$ such that $T^* \succ_k T^{**}$. The inductive hypothesis and Lemma~\ref{lem15} imply $\mathrm{mdi}_k(T^{**}) \geq f_k(n - \lfloor k/2 \rfloor - 1)$. By Lemma~\ref{lem1}(ii), $$\mathrm{mdi}_k(T) \geq \mathrm{mdi}_k(T^{**}) + \lfloor k/2 \rfloor + 1 \geq f_k(n - \lfloor k/2 \rfloor - 1) + \lfloor k/2 \rfloor + 1 > f_k(n).$$

\textbf{Statement (ii).} For a fixed $p \geq 2$, we apply induction on $r$. The base case $r = 1$ holds because, by Lemma~\ref{lem9}(ii), any tree $T' \notin \mathrm{Add}_k(\mathcal{B}_{p,\lfloor k/2 \rfloor+1},0)$ is reducible to a tree $T''$ with $p \cdot (\lfloor k/2 \rfloor+1) + 1$ vertices. Statement (i) then implies $\mathrm{mdi}_k(T'') > f_k(n-1)$, and thus $\mathrm{mdi}_k(T') > f_k(n)$. Now assume $r \geq 2$.

\textbf{Case 1.} $T \notin \mathrm{Add}_k(\mathcal{B}_{p,\lfloor k/2 \rfloor+1},r-1)$. If $T$ has a diametral $k$-twin $\ell$, then by Lemma~\ref{lem6}(ii) and the inductive hypothesis,
$$\mathrm{mdi}_k(T) \geq \mathrm{mdi}_k(T \setminus \ell) + 2 > f_k(n-1) + 2 > f_k(n).$$

If $T$ has no diametral $k$-twins, then it must contain a non-central branching vertex or a pendant path with at most $\lfloor k/2 \rfloor$ vertices connected to a central vertex. In either case, $T$ has a $k$-twin leaf. Let $\ell'$ be such a leaf with minimum eccentricity; one can see that $T \setminus \ell' \notin \mathrm{Add}_k(\mathcal{B}_{p,\lfloor k/2 \rfloor+1},r-2)$. By the inductive hypothesis, $$\mathrm{mdi}_k(T) \geq \mathrm{mdi}_k(T \setminus \ell') + 1 > f_k(n-1) + 1 \geq f_k(n).$$

\textbf{Case 2.} $T \in \mathrm{Add}_k(\mathcal{B}_{p,\lfloor k/2 \rfloor+1},r-1)$. By Lemmas~\ref{lem6}(i) and~\ref{lem9}, there exists a $k$-twin leaf $\ell'' \in V(T)$ such that $\mathrm{mdi}_k(T \setminus \ell'') \leq \mathrm{mdi}_k(T) - 1$. Since $\ell''$ is not a diametral leaf, there is a unique $k$-MDIS containing it; thus, $\mathrm{mdi}_k(T \setminus \ell'') = \mathrm{mdi}_k(T) - 1$ by Lemma~\ref{lem6}(i). Furthermore, $T \setminus \ell'' \in \mathrm{Add}_k(\mathcal{B}_{p,\lfloor k/2 \rfloor+1},r-2)$; hence, the statement holds.
\end{proof}

\begin{lemma}\label{lem17}
Let $T$ be an $n$-vertex tree with $\mathrm{diam}(T) \geq k + 3$. Then $\mathrm{mdi}_k(T) > f_k(n)$.
\end{lemma}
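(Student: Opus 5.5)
We prove Lemma~\ref{lem17} for odd $k \geq 3$ by induction on $n$, mirroring the even-case proofs of Lemmas~\ref{lem13} and~\ref{lem14}. For the base case, $n = k+4$ forces $T \cong P_{k+4}$. Lemma~\ref{lem5}(iii) then gives $\mathrm{mdi}_k(T) = k+4 > f_k(k+4)$; we also have $f_k(k+4) \leq k+3$ since $f_k(n) \leq n$. In the inductive step, Lemma~\ref{lem1}(i) gives $f_k(n) \leq f_k(n-1)+1$. It therefore suffices to find a leaf $x$ with $\mathrm{diam}(T\setminus x) \geq k+3$ and $\mathrm{mdi}^*_k(T,x) \geq 1$. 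Lemma~\ref{lem3} and the inductive hypothesis then give $\mathrm{mdi}_k(T) \geq \mathrm{mdi}_k(T\setminus x)+1 > f_k(n-1)+1 \geq f_k(n)$. The remaining option is to remove a whole pendant path and use Lemma~\ref{lem1}(ii).

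As in Lemma~\ref{lem14}, we pick a branching vertex $w$ adjacent to pendant paths $P_a, P_b$ ($a \leq b$) with $a$ minimal. We write $s = \lfloor k/2\rfloor = (k-1)/2$.

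\textbf{Case 1: $a+b \leq k$.} Some vertex $y$ on $P_b$ satisfies $N_k[y] = N_k[a_1]$, so $a_1$ is a $k$-twin. Since $a \leq b$, some diametral path avoids $a_1$, so $\mathrm{diam}(T\setminus a_1) \geq k+3$, and Lemma~\ref{lem6}(i) finishes the case.

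\textbf{Case 2: $b \geq s+2$.} We repeat the argument of Case~3 in Lemma~\ref{lem14}. First we find $x$ with $\mathrm{dist}(b_1,x) = k+2$. Then any $k$-MDIS $J \ni b_1,x$ has $N_k[b_2]\cap J = \{b_1\}$, because $\mathrm{dist}(x,y) \leq 2\,\mathrm{dist}(x,w) \leq 2(k+2-b) \leq k$. The inequality $2(k+2-b)\leq k$ needs $b \geq (k+4)/2$, which for odd $k$ is exactly $b \geq s+2$ rounded up, so this works. The diameter stays $\geq k+3$ after deleting $b_1$ because $b_1$'s path is long; here we may need to choose $b_1$ not to be the unique end of every diametral path. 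If it is, we use the other end instead, or we note that diameter $\geq k+3$ persists when $\mathrm{diam}(T) \geq k+4$. When $\mathrm{diam}(T)=k+3$ exactly, $T\setminus b_1$ may drop to diameter $k+2$; there we instead compare with Lemma~\ref{lem16}, which gives $\mathrm{mdi}_k \geq f_k$, and then use the strict gain $\mathrm{mdi}^*_k \geq 1$ together with $f_k(n)\leq f_k(n-1)+1$. This gives $\mathrm{mdi}_k(T) \geq f_k(n-1)+1 \geq f_k(n)$, only non-strict. So in this subcase we must find a second MDIS witnessing $\mathrm{mdi}^*_k(T,b_1)\geq 2$, for instance by pairing $b_1$ with two different far vertices, analogous to Lemma~\ref{lem6}(ii).

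\textbf{Case 3: $a+b \geq k+1$ and $b \leq s+1$.} This forces $a=b=s+1$, since $a \leq b$ and $a+b \geq k+1 = 2s+2$. Let $T'$ be $T$ with $P_b$ removed; $T'$ still has diameter $\geq k+3$ because $P_a$ replaces $P_b$. Lemma~\ref{lem7}(ii), with $k = 2(s+1)-1$, gives $\mathrm{mdi}_k(T) \geq \mathrm{mdi}_k(T') + s$. Induction gives $\mathrm{mdi}_k(T') > f_k(n-s-1) = f_k(n) - s$ by Lemma~\ref{lem1}(ii), hence $\mathrm{mdi}_k(T) > f_k(n)$.

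\textbf{Main obstacle.} The delicate point is the boundary diameter $k+3$ in Case 2. There, deleting a leaf may drop the diameter to $k+2$, where minimal trees do achieve $f_k$, so a single unit of gain is insufficient. I would handle this exactly as in the even case, by a separate preliminary lemma for diameter $k+3$ in the style of Lemma~\ref{lem13}. Its cases are as follows. If there are no non-central branching vertices, the tree consists of central vertices $u,v$ with pendant paths. Short paths give twin leaves whose deletion preserves diameter. Doubled $(s+1)$- or $(s+2)$-paths are handled by Lemma~\ref{lem7}. The finitely many small leftover trees are checked directly. If there is a non-central branching vertex, Lemma~\ref{lem8} does not directly apply, because diameter $k+3 > k+3-1$; so we argue via the deepest branching vertex as in Case 1 above.
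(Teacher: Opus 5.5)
\textbf{There is a genuine gap.} Your base case and your Cases 1 and 3 match the paper. However, the case you flag yourself, $\mathrm{diam}(T)=k+3$ with $b\ge s+2$, is left open, and the repair you propose cannot work.

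Take $T$ to be the spider with centre $w$ and pendant paths of $s$, $s+2$, $s+2$ vertices, with leaves $a_1,b_1,c_1$. Write $b_i,c_i$ for the vertices at distance $i-1$ from $b_1,c_1$. Here $\mathrm{diam}(T)=k+3$, and both $b_1$ and $c_1$ lie on every diametral path, so switching to ``the other end'' does not help.

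\begin{itemize}
\item The $k$-MDISs containing $b_1$ are $\{b_1,a_1,c_1\}$, $\{b_1,c_2\}$ and $\{b_1,c_3\}$. Only $\{b_1,c_2\}$ loses maximality when $b_1$ is deleted, because $c_2$ does not cover $b_2$. Hence $\mathrm{mdi}^*_k(T,b_1)=1$ exactly, and no second witness exists.
\item Likewise $\mathrm{mdi}^*_k(T,a_1)=0$. The only $k$-MDIS through $a_1$ is $\{a_1,b_1,c_1\}$, and $\{b_1,c_1\}$ is maximal in $T\setminus a_1$.
\end{itemize}
So no single-leaf deletion gives a strict gain while keeping the diameter at least $k+3$.

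What you are missing is that Lemma~16(i) is \emph{strict}. The tree $T\setminus b_1$ has diameter $k+2$ and $3(s+1)+1$ vertices, so $\mathrm{mdi}_k(T\setminus b_1)>f_k(n-1)$. Lemma~3 with $\mathrm{mdi}^*_k(T,b_1)\ge 1$ then gives $\mathrm{mdi}_k(T)>f_k(n-1)+1\ge f_k(n)$.

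The paper isolates exactly this tree, as follows.
\begin{itemize}
\item If $\mathrm{diam}(T)=k+3$ and $b\ge s+2$, then $T$ is a spider at $w$ whose legs other than $P_a$ all have $s+2$ vertices. Long legs at two different branching vertices would force diameter at least $k+4$.
\item It first deletes $a_1$. This keeps the diameter and works unless $\mathrm{mdi}^*_k(T,a_1)=0$, which forces $a=s$.
\item It then deletes $b_1$, which keeps the diameter when $\deg(w)\ge 4$.
\item The only remaining tree, with $\deg(w)=3$, is the one above, and it is handled by Lemma~16(i).
\end{itemize}

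Your Lemma-13-style fallback is also mis-specified. For odd $k$ the diameter $k+3$ is even, so there is a single central vertex, not two vertices $u,v$. Lemma~7 also gives nothing for doubled $(s+2)$-paths: $k=2(s+2)-3$ is neither $\ge 2(s+2)$ nor in $\{2(s+2)-1,\,2(s+2)-2\}$.

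There is also an arithmetic slip in your Case 2. Since $(k+4)/2=s+5/2$, the inequality $2(k+2-b)\le k$ needs $b\ge s+3$, not $b\ge s+2$. At $b=s+2$ your chain only gives $\mathrm{dist}(x,y)\le k+1$. The fix is a sharper estimate. For $y\notin P_b$ with $\mathrm{dist}(y,b_2)\le k$, the path from $y$ to $b_2$ passes through $w$, so $\mathrm{dist}(y,w)\le k-(b-1)=\mathrm{dist}(x,w)-1$. Hence $\mathrm{dist}(x,y)\le 2k+3-2b$, which is at most $k$ precisely when $b\ge s+2$.
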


\begin{proof}
We apply induction on~$n$, using the same approach as in Lemma~\ref{lem14}. Consider a branching vertex $w \in V(T)$ adjacent to pendant paths $P_a$ and $P_b$ (with $a \leq b$) having endvertices $a_1$ and $b_1$, with respective neighbors $a_2$ and $b_2$. We may assume that, by the choice of $w$, the parameter $a$ is minimal; thus, there exists a diametral path not containing $a_1$ and $\mathrm{diam}(T \setminus a_1) = \mathrm{diam}(T)$. We now consider several cases:

\textbf{Case~1. $a + b \leq k$.} There exists a vertex $x$ on $P_b$ such that $N_k[x] = N_k[a_1]$. The inductive hypothesis and Lemma~\ref{lem6}(i) imply $$\mathrm{mdi}_k(T) \geq \mathrm{mdi}_k(T \setminus a_1) + 1 > f_k(n-1) + 1 \geq f_k(n).$$

\textbf{Case~2. $a = b = \lfloor k/2 \rfloor + 1$.} Consider the tree $T'$ obtained from $T$ by deleting $P_b$. Note that $\mathrm{diam}(T') = \mathrm{diam}(T)$. The inductive hypothesis and Lemma~\ref{lem7} imply
$$\mathrm{mdi}_k(T) \geq \mathrm{mdi}_k(T') + \lfloor k/2 \rfloor > f_k(n-\lfloor k/2 \rfloor-1) + \lfloor k/2 \rfloor = f_k(n).$$

\textbf{Case~3.} $b \geq \lfloor k/2 \rfloor + 2$ and $\mathrm{diam}(T) \geq k + 4$. Following the argument in Lemma~\ref{lem14} (Case 3), one can check that $\mathrm{mdi}_k^*(T,b_1) \geq 1$. Since $\mathrm{diam}(T \setminus b_1) \geq k + 3$, the inductive hypothesis implies $$\mathrm{mdi}_k(T) \geq \mathrm{mdi}_k(T \setminus b_1) + 1 > f_k(n-1) + 1 \geq f_k(n).$$

\textbf{Case~4.} $b \geq \lfloor k/2 \rfloor + 2$ and $\mathrm{diam}(T) = k + 3$. If $T$ has at least two branching vertices, we may assume that it contains two distinct pendant paths with at least $\lfloor k/2 \rfloor + 2$ vertices each, implying $\mathrm{diam}(T) \geq k + 4$, a contradiction. Therefore, $T$ has a unique branching vertex $w$, and all its pendant paths, except possibly $P_a$, contain $\lfloor k/2 \rfloor+2$ vertices. If $\mathrm{mdi}_k^*(T,a_1) \geq 1$, then since $\mathrm{diam}(T \setminus a_1) = \mathrm{diam}(T)$, the inductive hypothesis implies $$\mathrm{mdi}_k(T) \geq \mathrm{mdi}_k(T \setminus a_1) + 1 > f_k(n-1) + 1 \geq f_k(n).$$ If $\mathrm{mdi}_k^*(T,a_1) = 0$, then every $k$-MDIS containing $a_1$ must also contain $b_1$. This is possible only if $\mathrm{dist}(a_1,b_1) = k + 1$ and $a = \lfloor k/2 \rfloor$. Following the argument in Lemma~\ref{lem14} (Case 3), one can check that $\mathrm{mdi}_k^*(T,b_1) \geq 1$ and thus $\mathrm{mdi}_k(T) > \mathrm{mdi}_k(T \setminus b_1)$. If $\deg(w) \geq 4$, then $\mathrm{diam}(T \setminus b_1) = \mathrm{diam}(T)$; hence, $\mathrm{mdi}_k(T) > f_k(n)$ by the inductive hypothesis. If $\deg(w) = 3$, then $\mathrm{diam}(T \setminus b_1) = k + 2$ and $\lfloor k/2 \rfloor + 1$ divides $|V(T \setminus b_1)| - 1$. Therefore, Lemmas~\ref{lem4} and~\ref{lem16}(i) imply $$\mathrm{mdi}_k(T) \geq \mathrm{mdi}_k(T \setminus b_1) + 1 > f_k(n-1) + 1 > f_k(n).$$ 
\end{proof}

Lemmas~\ref{lem14}--\ref{lem17} imply the following fact.

\begin{theorem}\label{thm3}
For all odd $k \geq 3$ and all $n \geq 1$, every $k$-minimal $n$-vertex tree has exactly $f_k(n)$ $k$-MDISs. Furthermore, the following hold:

(i) If $1 \leq n \leq k + 1$, all $n$-vertex trees are $k$-minimal;

(ii) If $n \geq k + 2$ and $\lfloor k/2 \rfloor + 1$ divides $n - 1$, then $S_{p,\lfloor k/2 \rfloor+1}$ is the only $k$-minimal $n$-vertex tree, where $p = \lfloor (n-1)/(\lfloor k/2 \rfloor+1) \rfloor$;

(iii) If $n \geq k + 2$ and $\lfloor k/2 \rfloor + 1$ does not divide $n - 1$, then  $$\mathrm{Add}_k^*(S_{p,\lfloor k/2 \rfloor+1}, r) \cup \mathrm{Add}_k(\mathcal{B}_{p,\lfloor k/2 \rfloor+1},r-1)$$
is the class of all  $k$-minimal $n$-vertex trees, where $p = \lfloor (n-1)/(\lfloor k/2 \rfloor+1) \rfloor$ and 
$r = (n - 1) \bmod (\lfloor k/2 \rfloor + 1)$.
\end{theorem}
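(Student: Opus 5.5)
The plan is to assemble Theorem~\ref{thm3} from Lemmas~\ref{lem15}--\ref{lem17} by splitting $n$-vertex trees according to their diameter. Throughout let $q=\lfloor k/2\rfloor+1=(k+1)/2$. For odd $k$ we have $k \bmod 2 = 1$, so $f_k(n)=n-\lfloor (n-1)/q\rfloor+1$ for $n\ge k+2$.

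\textbf{Small $n$.} Every tree on $n\le k+1$ vertices has diameter at most $k$. Any two of its vertices are therefore within distance $k$, so the $k$-MDISs are exactly the singletons. This gives $n=f_k(n)$ $k$-MDISs, and every such tree is $k$-minimal, which is statement (i).

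\textbf{Large $n$, lower bound and extremal trees.} Now let $n\ge k+2$ and write $n-1=pq+r$ with $0\le r\le q-1=\lfloor k/2\rfloor$. Since $n\ge k+2=2q+1$, we get $p\ge 2$. I would split trees by diameter:
\begin{itemize}
\item If the diameter is at most $k$, the tree has $n>f_k(n)$ $k$-MDISs.
\item If the diameter is $k+1$, Lemma~\ref{lem15} gives $\mathrm{mdi}_k(T)\ge f_k(n)$, with equality exactly on $\mathrm{Add}^*_k(S_{p,q},r)$.
\item If the diameter is $k+2$, two subcases arise. For $r=0$, Lemma~\ref{lem16}(i) gives strict inequality. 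For $r\ge1$, Lemma~\ref{lem16}(ii) gives equality exactly on $\mathrm{Add}_k(\mathcal{B}_{p,q},r-1)$.
\item If the diameter is at least $k+3$, Lemma~\ref{lem17} gives strict inequality.
\end{itemize}
Hence the minimum over all $n$-vertex trees is at least $f_k(n)$. It is attained because $\mathrm{Add}^*_k(S_{p,q},r)$ is nonempty: attach $r$ vertices as a pendant path at the centre of $S_{p,q}$. This path has length $r\le\lfloor k/2\rfloor<q$, so it keeps the diameter $k+1$ and creates no special pair. The minimum therefore equals $f_k(n)$, and the class of $k$-minimal trees is the union of the equality cases above.

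\textbf{Reading off (ii) and (iii).} If $q\mid n-1$, then $r=0$, and $\mathrm{Add}^*_k(S_{p,q},0)=\{S_{p,q}\}$ by definition. The diameter-$(k+2)$ case contributes nothing, so $S_{p,q}$ is the unique $k$-minimal tree, which is statement (ii). Otherwise $r\ge1$, and the union of the two equality classes is exactly the family in statement (iii). In both cases $p=\lfloor (n-1)/q\rfloor$ and $r=(n-1)\bmod q$.

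The step needing the most care is checking that the parametrisations in Lemmas~\ref{lem15} and~\ref{lem16} cover every $n\ge k+2$ exactly once. In particular, the $r=0$ case of diameter $k+2$ must be handled by Lemma~\ref{lem16}(i), not by part (ii). The remaining work is only the bookkeeping of the formula for $f_k$.
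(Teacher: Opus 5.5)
Your proposal is correct and follows the paper's route. The paper's proof is the one-line remark that the preceding lemmas imply the theorem: split $n$-vertex trees by diameter and read off the lower bounds and equality cases from Lemmas~\ref{lem15}, \ref{lem16} and~\ref{lem17}. The paper cites Lemmas 14--17, but Lemmas~\ref{lem15}--\ref{lem17} are what is actually needed for odd $k$. You also check explicitly that $\mathrm{Add}^*_k(S_{p,\lfloor k/2\rfloor+1},r)$ is nonempty, so that $f_k(n)$ is attained; the paper leaves this implicit.
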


Remarkably, Theorem~3 also holds for $k = 1$, because $\lfloor k/2 \rfloor + 1 = 1$ divides $n - 1 \geq 2$, and thus the only 1-minimal $n$-vertex tree is isomorphic to the star graph $S_{n-1,1}$. 

For $p \geq 3$, let $S'''_{p,2}$ be a tree obtained from $S_{p-1,2}$ by adding a leaf adjacent to a vertex of degree 2. The next observation follows immediately from Theorem~3 and from the fact that for all $p,h \geq 1$ we have $|\mathcal{B}_{p,h
}| = \lfloor p/2 \rfloor$.

\begin{corollary}
The following hold:

(i) If $1 \leq n \leq 4$, all $n$-vertex trees are $3$-minimal.

(ii) If $n \geq 5$ is odd, the only $3$-minimal $n$-vertex tree is $S_{(n-1)/2,2}$.

(iii) If $n \geq 6$ is even, then  $\{S'_{n/2,2},S'''_{n/2,2}\} \cup \mathcal{B}_{(n-2)/2,2}$ is the class of $3$-minimal $n$-vertex trees. Moreover, there are $\lfloor (n+6)/4 \rfloor$ such trees.
\end{corollary}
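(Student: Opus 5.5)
The corollary is the case $k=3$ of Theorem~\ref{thm3}, where $\lfloor k/2\rfloor+1=2$. The plan is to specialize the theorem and then identify the families $\mathrm{Add}$ explicitly.

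Part (i) is Theorem~\ref{thm3}(i) with $k+1=4$. For (ii), if $n\ge 5$ is odd, then $2$ divides $n-1$, so by Theorem~\ref{thm3}(ii) the unique $3$-minimal tree is $S_{p,2}$ with $p=(n-1)/2$.

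For (iii), let $n\ge 6$ be even. Then $p=\lfloor (n-1)/2\rfloor=(n-2)/2$ and $r=1$. Theorem~\ref{thm3}(iii) gives the class
$$\mathrm{Add}^*_3(S_{p,2},1)\cup\mathrm{Add}_3(\mathcal{B}_{p,2},0).$$
The second family is just $\mathcal{B}_{(n-2)/2,2}$, since adding no vertices leaves the tree unchanged, and a tree of $\mathcal{B}_{p,2}$ has no diametral $3$-twins by Lemma~\ref{lem9}(ii).

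The main work is to list $\mathrm{Add}^*_3(S_{p,2},1)$ directly. We add one leaf to $S_{p,2}$ while keeping the diameter at $4$, so the new leaf attaches either to the center or to a middle vertex of one of the $p$ pendant $2$-paths. Attaching it to a leaf is not allowed, since it would raise the diameter.

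Attaching the leaf to the center gives $S'_{p+1,2}=S'_{n/2,2}$. Its diametral leaves are exactly the old leaves, and no new special pair is created, because the diametral leaves all have distinct $N_3$-neighbourhoods or are not twins.

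Attaching the leaf to a middle vertex gives $S'''_{p+1,2}=S'''_{n/2,2}$. Here the new leaf and the old leaf on that branch are diametral and have equal $N_3$-neighbourhoods. They are therefore twins with the same closed neighbourhood, so they do not form a $3$-special pair. Any other diametral leaf $x$ is not a $3$-twin when $p\ge 2$ (this needs a quick check), so again no special pair arises. Thus both trees lie in the family, and for $p\ge2$ they are non-isomorphic: one has a vertex of degree $3$ off the center and the other does not.

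The main obstacle is checking the definition of $3$-special pairs carefully in these small configurations. It should be confirmed, for example, that in $S_{p,2}$ a leaf $\ell$ with neighbour $m$ satisfies $N_3[\ell]\ne N_3[y]$ for every other vertex $y$, so that plain leaves are not $3$-twins. The case $p=2$ (that is, $n=6$) should also be checked separately.

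Finally, the count uses $|\mathcal{B}_{p,2}|=\lfloor p/2\rfloor$. The trees in $\mathcal{B}_{p,2}$ have diameter $5$, so they are distinct from the two diameter-$4$ trees. This gives
$$2+\lfloor (n-2)/4\rfloor=\lfloor (n+6)/4\rfloor$$
$3$-minimal trees.
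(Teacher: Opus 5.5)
Your proposal is correct and takes the paper's route: specialize Theorem~3 to $k=3$ (so in part (iii) $p=(n-2)/2$ and $r=1$) and count using $|\mathcal{B}_{p,2}|=\lfloor p/2\rfloor$. Along the way you spell out the identification $\mathrm{Add}^*_3(S_{p,2},1)=\{S'_{p+1,2},S'''_{p+1,2}\}$, which the paper leaves implicit.

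One wording fix for $S'_{p+1,2}$: there is no $3$-special pair simply because none of its diametral leaves is a $3$-twin, which is the check you list at the end. Twins with \emph{distinct} $N_3$-neighbourhoods are exactly what a special pair is, so that phrase does not belong in the justification.
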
 

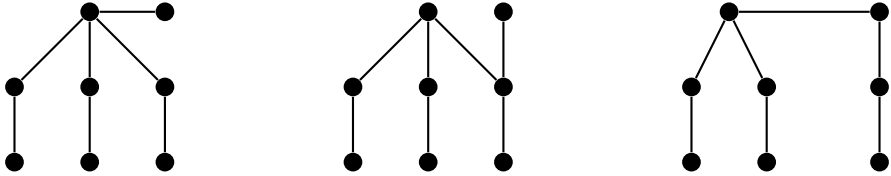
\begin{figure}[h!]
\center
\begin{tikzpicture}
  [scale=1,auto=left,every node/.style={circle}]

\node[fill = black,inner sep=2.5pt] (a0) at (1,1) {};
\node[fill = black,inner sep=2.5pt] (a1) at (0,0) {};
\node[fill = black,inner sep=2.5pt] (a2) at (0,-1) {};
\node[fill = black,inner sep=2.5pt] (a3) at (1,0) {};
\node[fill = black,inner sep=2.5pt] (a4) at (1,-1) {};
\node[fill = black,inner sep=2.5pt] (a5) at (2,0) {};
\node[fill = black,inner sep=2.5pt] (a6) at (2,-1) {};
\node[fill = black,inner sep=2.5pt] (a7) at (2,1) {};

\node[fill = black,inner sep=2.5pt] (b0) at (5.5,1) {};
\node[fill = black,inner sep=2.5pt] (b1) at (4.5,0) {};
\node[fill = black,inner sep=2.5pt] (b2) at (4.5,-1) {};
\node[fill = black,inner sep=2.5pt] (b3) at (5.5,0) {};
\node[fill = black,inner sep=2.5pt] (b4) at (5.5,-1) {};
\node[fill = black,inner sep=2.5pt] (b5) at (6.5,0) {};
\node[fill = black,inner sep=2.5pt] (b6) at (6.5,-1) {};
\node[fill = black,inner sep=2.5pt] (b7) at (6.5,1) {};

\node[fill = black,inner sep=2.5pt] (c0) at (9,-1) {};
\node[fill = black,inner sep=2.5pt] (c1) at (9,0) {};
\node[fill = black,inner sep=2.5pt] (c2) at (10,-1) {};
\node[fill = black,inner sep=2.5pt] (c3) at (10,0) {};
\node[fill = black,inner sep=2.5pt] (c4) at (9.5,1) {};
\node[fill = black,inner sep=2.5pt] (c5) at (11.5,1) {};
\node[fill = black,inner sep=2.5pt] (c6) at (11.5,0) {};
\node[fill = black,inner sep=2.5pt] (c7) at (11.5,-1) {};

\foreach \from/\to in {
a0/a1,a1/a2,a0/a3,a3/a4,a0/a5,a5/a6, a0/a7, b0/b1,b1/b2,b0/b3,b3/b4,b0/b5,b5/b6,b5/b7,c0/c1,c2/c3,c1/c4,c3/c4,c4/c5,c5/c6,c6/c7}
    \draw[thick] (\from) -- (\to);

\end{tikzpicture}
\caption{\small All non-isomorphic $3$-minimal 8-vertex trees.}
\end{figure}

Finally, we show that if a $k$-minimal tree is not unique, then there are linearly many such trees.

\begin{theorem}\label{thm4}
For all odd $k \geq 3$ and all $n \geq 4$, if $(k+1)/2$ does not divide $n - 1$, then the number of non-isomorphic $n$-vertex $k$-minimal trees is between $n / (k + 1)$ and $t(k^2) \cdot n$.
\end{theorem}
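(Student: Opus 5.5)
By Theorem~\ref{thm3}(iii), when $h=\lfloor k/2\rfloor+1=(k+1)/2$ does not divide $n-1$, the $k$-minimal $n$-vertex trees are exactly those in
\[
\mathrm{Add}_k^*(S_{p,h},r)\cup\mathrm{Add}_k(\mathcal{B}_{p,h},r-1),
\]
where $p=\lfloor (n-1)/h\rfloor$ and $r=(n-1)\bmod h\in[h-1]$. Since $n\ge 4$ and $h\nmid n-1$, we have $n\ge h+2$. The proof of the theorem therefore reduces to counting non-isomorphic trees in this union. The two bounds will be proved separately.

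\textbf{Lower bound.} We exhibit $\lfloor p/2\rfloor$ pairwise non-isomorphic trees, all of which already lie in $\mathrm{Add}_k(\mathcal{B}_{p,h},r-1)$. Fix $p_1\ge p_2\ge 1$ with $p_1+p_2=p$. Start from $B_{p_1,p_2,h}$ and attach a path of $r-1$ new vertices to the central vertex lying on the $p_1$ side. This new pendant path has at most $h-2$ vertices, so the diameter is unchanged, and its leaf is not diametral; hence no diametral $k$-twins are created. In the resulting tree, the central vertices and the two sides of the central edge are still determined: one side carries $p_1$ pendant $h$-paths and the other carries $p_2$, while the short extra path is distinguishable by its length. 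Consequently the pair $\{p_1,p_2\}$ is an isomorphism invariant. This gives $\lfloor p/2\rfloor$ distinct trees. Since $p\ge (n-h)/h$, we get $\lfloor p/2\rfloor\ge n/(2h)-1$ roughly, while the claimed bound is $n/(k+1)=n/(2h)$. The rounding must be handled with care: for small $n$ we add the trees from $\mathrm{Add}_k^*(S_{p,h},r)$ (for example, $S_{p,h}$ with the extra vertices attached as a path at the center), which is disjoint from the $\mathcal{B}$ family because of the diameter. The corollary for $k=3$, with its count $\lfloor (n+6)/4\rfloor$, shows the kind of slack available. I expect this bookkeeping of floors, and checking the small cases, to be fiddly but routine.

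\textbf{Upper bound.} For $\mathrm{Add}_k^*(S_{p,h},r)$, I reuse the compression argument of Theorem~\ref{thm2}. Every such tree has diameter $k+1=2h$, a unique central vertex, and at least $p-r-1$ untouched pendant $h$-paths. The map $F$ deletes surplus pendant $h$-paths, leaving at most $r+3$ of them. It is injective and lands in trees with fewer than $k^2$ vertices, so this family contributes at most $t(k^2)$ trees. For $\mathrm{Add}_k(\mathcal{B}_{p,h},r-1)$, split the union over the $\lfloor p/2\rfloor\le n$ choices of $(p_1,p_2)$. For each fixed $B_{p_1,p_2,h}$, the $r-1\le h-2$ added vertices modify only a bounded number of the pendant paths on each side. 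An analogous compression map deletes untouched pendant $h$-paths on each side, down to a bounded number, and I expect it to be injective for fixed $(p_1,p_2)$. The image then lies among trees with fewer than $k^2$ vertices, so each choice contributes at most $t(k^2)$ trees. Summing gives at most $t(k^2)\lfloor p/2\rfloor+t(k^2)\le t(k^2)\,n$.

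\textbf{Main obstacle.} The hardest step is showing that the compression maps are injective up to isomorphism, especially in the $\mathcal{B}$ case. There the two sides must remain distinguishable after deleting paths, which is why $(p_1,p_2)$ is fixed first and deletions are done side by side. A second risk is verifying that the compressed trees still have fewer than $k^2$ vertices for the smallest $k$, such as $k=3$. Any exceptional small cases can be checked directly, as was done for $(k,r)=(4,2)$ in Theorem~\ref{thm2}.
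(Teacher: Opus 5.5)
Your proposal follows the paper's route. The lower bound uses the $\lfloor p/2\rfloor$ pairwise non-isomorphic members of $\mathrm{Add}_k(\mathcal{B}_{p,h},r-1)$ together with the $S$-family. The upper bound uses the Theorem~2 compression for $\mathrm{Add}^*_k(S_{p,h},r)$, plus, for each fixed $(p_1,p_2)$, a map deleting untouched pendant $h$-paths, summed over at most $\lfloor p/2\rfloor$ choices.

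A few small points need repair:

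\textbf{Range of $n$.} Theorem~\ref{thm3}(iii) only applies for $n\ge k+2$. Your inference $n\ge h+2$ is false (e.g.\ $k=7$, $n=4$) and would not suffice anyway. So the range $n\le k+1$, where every tree is $k$-minimal, must be dispatched separately; this is trivial, as in the paper.

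\textbf{Lower-bound rounding.} The shortfall $\lfloor p/2\rfloor<n/(k+1)$ is not a small-$n$ effect; it holds for every $n$, since $n>ph$. A single tree from the $S$-family always closes it, because $n\le (p+1)h$ gives $\lfloor p/2\rfloor+1\ge (p+1)/2\ge n/(k+1)$.

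\textbf{Injectivity in the $\mathcal{B}$ case.} Fixing $(p_1,p_2)$ alone does not make the compression injective. The paper picks target sizes ($h-1$ versus $h$, or $p_1$ versus $h$, or $h-1$ on both sides when $p_1=p_2$) so that the two central vertices of the image stay distinguishable by their numbers of diametral branches. In the original tree these numbers are exactly $p_1$ and $p_2$, because the $r-1<h$ added vertices cannot form a new diametral branch. A $2$-to-$1$ map would also suffice, given the slack $\lfloor p/2\rfloor\le n/(2h)$.

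\textbf{The case $k=3$.} Here the Theorem~2 choice of $r+3$ retained paths gives $10>k^2$ vertices. Keep fewer paths there, or simply invoke Corollary~2.
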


\begin{proof}
If $n \leq k + 1$, the statement holds trivially; let $n = p \cdot (\lfloor k/2 \rfloor + 1) + 1 + r$ for some $p \geq 2$ and $r \in \big[ \lfloor k/2 \rfloor \big]$. The lower bound follows immediately from the fact that $|\mathrm{Add}_k(S_{p,\lfloor k/2 \rfloor+1},1)| \geq 2$, and for all $r \geq 1$ we have $$|\mathrm{Add}_k(\mathcal{B}_{p,\lfloor k/2 \rfloor+1},r - 1)| \geq |\mathrm{Add}_k(\mathcal{B}_{p,\lfloor k/2 \rfloor+1},0)| = |\mathcal{B}_{p,\lfloor k/2 \rfloor+1}| = \bigg\lfloor \frac p2 \bigg\rfloor \geq \frac{n}{k+1} - 1.$$

We now prove the upper bound. Using the argument from Theorem~2, one can verify that $|\mathrm{Add}_k^*(S_{p,\lfloor k/2 \rfloor+1}, r)| < t(k^2)$. Let $p = p_1 + p_2$ with $\min(p_1,p_2) \geq 1$, and let $$\mathfrak{B}_{p_1,r} = \mathrm{Add}_k(B_{p_1,p_2,\lfloor k/2 \rfloor+1},r - 1).$$ Since $|\mathcal{B}_{p,\lfloor k/2 \rfloor+1}| < n/2$, it suffices to show that $|\mathfrak{B}_{p_1,r}| < t(k^2)$ for every $r \in \big[\lfloor k/2 \rfloor\big]$ and $1 \leq p_1 \leq \lfloor p/2 \rfloor$. We may assume that $p \geq k + 1$; otherwise, all trees from $\mathfrak{B}_{p_1,r}$ have fewer than $k^2$ vertices and the inequality holds trivially. We now consider three cases.

\textbf{Case 1.} If $p_2 > p_1 \geq \lfloor k/2 \rfloor + 1$, consider a function $$F_1: \mathfrak{B}_{p_1,r} \longrightarrow \mathrm{Add}_k(B_{\lfloor k/2 \rfloor,\lfloor k/2 \rfloor+1,\lfloor k/2 \rfloor+1},r - 1)$$

that maps a tree $T \in \mathfrak{B}_{p_1,r}$ to a tree $F_1(T)$ such that its central vertices are connected to  $p_1 - \lfloor k/2 \rfloor$ and $p_2 - \lfloor k/2 \rfloor - 1$ fewer pendant $(\lfloor k/2 \rfloor + 1)$-paths, respectively.

\textbf{Case 2.} If $p_2 > \lfloor k/2 \rfloor \geq p_1$, consider a function $$F_2: \mathfrak{B}_{p_1,r} \longrightarrow \mathrm{Add}_k(B_{p_1,\lfloor k/2 \rfloor+1,\lfloor k/2 \rfloor+1},r - 1)$$ 
that maps a tree $T \in \mathfrak{B}_{p_1,r}$ to a tree $F_2(T)$ such that one of its central vertices is connected to $p_2 - \lfloor k/2 \rfloor - 1$ fewer pendant $(\lfloor k/2 \rfloor + 1)$-paths.

\textbf{Case 3.} If $p_1 = p_2$, consider a function $$F_3: \mathfrak{B}_{p_1,r} \longrightarrow \mathrm{Add}_k(B_{\lfloor k/2 \rfloor,\lfloor k/2 \rfloor,\lfloor k/2 \rfloor+1},r - 1)$$
that maps a tree $T \in \mathfrak{B}_{p_1,r}$ to a tree $F_3(T)$ such that both its central vertices are connected to $p_1 - \lfloor k/2 \rfloor$ fewer pendant $(\lfloor k/2 \rfloor + 1)$-paths.

One can check that for all $i \in \{1,2,3\}$, $F_i$ is an injection and the trees $F_i(T)$ have fewer than $k^2$ vertices; this implies the upper bound.
\end{proof}
 
\section{Discussion}

It follows from Theorems~\ref{thm1} and~\ref{thm3} that every isolate-free forest on at least 5 vertices and with the minimum possible number of $k$-MDISs is a $k$-minimal tree:

\begin{corollary}
For all $k \geq 1$ and $n \geq 5$, every disconnected isolate-free $n$-vertex forest has more than $f_k(n)$ $k$-MDISs. The forest $2P_2$ has $f_2(4) + 1$ $2$-MDISs and $f_k(4)$ $k$-MDISs for all $k \geq 3$.
\end{corollary}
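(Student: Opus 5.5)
The number of $k$-MDISs of a forest is multiplicative over components. So if $F = T_1 \cup \dots \cup T_m$ ($m \ge 2$) is isolate-free with $|V(T_i)| = n_i \ge 2$, then
\[
\mathrm{mdi}_k(F) = \prod_{i=1}^m \mathrm{mdi}_k(T_i) \geq \prod_{i=1}^m f_k(n_i)
\]
by Theorems~\ref{thm1} and~\ref{thm3}. The plan is to show that this product of tree minima exceeds $f_k(n)$ whenever $n = \sum n_i \ge 5$.

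First I reduce to two components. Group the components into $F_1 = T_1$ and $F_2 = T_2 \cup \dots \cup T_m$. It then suffices to prove the superadditivity-type inequality
\[
g(a)\,g(b) > f_k(a+b) \quad \text{for } a,b \ge 2,\ a+b \ge 5,
\]
where $g(a) \ge f_k(a)$ is the minimum over isolate-free forests on $a$ vertices. I would run this as an induction on $n$. For the inductive step I need $g(a) \ge f_k(a)$, which follows from the induction and the tree case, together with the small-case bound $g(4) \ge f_k(4) - [\,k=2\,]$, which I would handle by hand. The heart of the argument is then the numerical inequality $f_k(a)f_k(b) > f_k(a+b)$.

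Second, I verify that numerical inequality. The function $f_k$ has three useful properties: $f_k(m) \ge 2$ for all $m \ge 2$; $f_k(m) \ge \lceil m/2 \rceil$ roughly, since $\lfloor k/2 \rfloor + 1 \ge 2$; and, by Lemma~\ref{lem1}, $f_k$ is nondecreasing with increments at most $1$, so $f_k(a+b) \le f_k(a) + b$. Writing $x = f_k(a)$ and $y = f_k(b)$ with $x, y \ge 2$, we get $xy \ge x + y$, with equality only when $x = y = 2$. In the generic case I want $f_k(a+b) < f_k(a) + f_k(b)$. This follows from the increment structure: $f_k(a+b) - f_k(a) \le b - \lfloor (b-1)/(\lfloor k/2\rfloor+1)\rfloor$ roughly, which is at most $f_k(b)$ when $b \ge k+2$, and equals $b = f_k(b)$ when $b \le k+1$. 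I would compare carefully via the closed formula, using $\lfloor u \rfloor + \lfloor v \rfloor \le \lfloor u+v \rfloor$ in the form $\lfloor (a+b-\epsilon)/q \rfloor \ge \lfloor (a-\epsilon)/q \rfloor + \lfloor b/q \rfloor$ with $q = \lfloor k/2\rfloor+1$ and $\epsilon = k \bmod 2$. This gives $f_k(a+b) \le f_k(a) + f_k(b) - 1$ or similar. Note that the regime $a \le k+1$, where $f_k(a) = a$, must be merged with the formula regime; there I would use $f_k(a+b) \le a + f_k(b)$, which follows from the increments being at most $1$. So $f_k(a+b) \le x + y \le xy$, and I need strictness.

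Third, I expect the main obstacle to be strictness in the tight case $x = y = 2$, that is, $a = b = 2$ with $n = 4$ (excluded) or cases where $f_k$ equals $2$. Since $f_k(m) = 2$ only for $m = 2$, and for $k=2$ possibly $m=3$ (check: $f_2(3) = 3 - \lfloor 3/2 \rfloor + 1 = 3$, so no), the tight case is exactly $2P_2$, which lies at $n = 4$. For $n \ge 5$, at least one of $x, y$ is $\ge 3$, so $xy > x + y \ge f_k(a+b)$, which gives strictness. The final claims about $2P_2$ are direct computation: $2P_2$ has $4$ $k$-MDISs, while $f_2(4) = 3$ and $f_k(4) = 4$ for $k \ge 3$. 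Care is needed for $k=1$, where $f_1(m) = 2$ for all $m \ge 2$ (stars). There $x = y = 2$ always and the product gives $4 > 2$, so this case is fine directly.
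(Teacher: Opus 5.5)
Your proof is correct. It is the argument behind the paper's one-line justification that the corollary follows from Theorems~\ref{thm1} and~\ref{thm3}: multiplicativity of $\mathrm{mdi}_k$ over components, the tree bound, and the arithmetic $f_k(a)f_k(b) > f_k(a+b)$ for $a,b\ge 2$, $a+b\ge 5$, which you verify correctly through $f_k(a+b)\le f_k(a)+f_k(b)$.

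There is one slip. The four-vertex fact you need is $g(4)=f_k(4)$, which follows from your own count $\mathrm{mdi}_k(2P_2)=4\ge f_k(4)$, not $g(4)\ge f_k(4)-[k=2]$. With the weaker bound, the case $k=2$, $3P_2=P_2\cup 2P_2$ only gives the non-strict $2\cdot 2=4=f_2(6)$. The detour through $g$ is unnecessary anyway: $\mathrm{mdi}_k(F)\ge\prod_i f_k(n_i)$ together with induction on the number of components is enough.
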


The sharp lower bound for the number of $k$-MDISs in the class of all isolate-free graphs seems to be more difficult to obtain. This class requires new approaches, because Lemma~\ref{lem3} (which was our most powerful tool) does not apply: if one removes a vertex belonging to a cycle, the graph remains connected, but the distances between the pairs of remaining vertices may change. For example, the simple $8$-cycle has four $3$-MDISs (because every vertex of it belongs to a unique $3$-MDIS of cardinality 2), but its $7$-vertex subgraph obtained by removing a vertex has seven $3$-MDISs by Lemma~\ref{lem5}(iii).

For all $k \geq 2$, one can easily find examples of $n$-vertex bipartite graphs with $\lceil n/2 \rceil$ $k$-MDISs (such as the simple $(2k+2)$-cycle or the $(k+1)$-dimensional hypercube). We believe that the bound $\lceil n/2 \rceil$ is the best possible for all $k \geq 2$. It is known~\cite{CW24} that the sharp lower bound for the number of 1-MDISs in the class of bipartite twin-free graphs is $\lceil n/2 \rceil + 1$. 

\begin{question}
Does there exist for some $n,k \geq 2$ an $n$-vertex bipartite isolate-free graph with fewer than $\lceil n/2 \rceil$ $k$-MDISs?
\end{question}

The class of circulant graphs provides some examples of $n$-vertex isolate-free graphs with fewer than $n/2$ $k$-MDISs. For example, the 24-vertex 7-regular circulant graph $C_{24}^{2,3,9,12}$ has exactly eight $2$-MDISs, each containing three vertices. Therefore, the next questions arise:

\begin{question}
For a given $k \geq 2$, what is the asymptotic behavior of the minimum possible number of $k$-MDISs in the class of $n$-vertex isolate-free graphs?
\end{question}

\begin{question}
Is it true that for every $k \geq 2$ and sufficiently large $n$, the following holds: if an isolate-free (bipartite) graph has the minimum possible number of $k$-MDISs among all $n$-vertex isolate-free (bipartite) graphs, then each of its vertices belongs to exactly one $k$-MDIS?
\end{question}

\section*{Acknowledgments}

The article was prepared within the framework of the Basic Research Program at the
National Research University Higher School of Economics (HSE).


\begin{thebibliography}{9}	

\bibitem{MM60}
R.E. Miller, D.E. Muller, A problem of maximum consistent subsets, IBM Research Report RC-240, J.T. Watson Research Center, New York, USA, 1960.

\bibitem{MM65}
J.W. Moon, L. Moser, On cliques in graphs, Isr. J. Math. 3 (1965) 23--28.

\bibitem{F87} Z. Füredi, The number of maximal independent sets in connected graphs, J. Graph Theory 11 (1987) 463--470.

\bibitem{GGG88} J.R. Griggs, C.M. Grinstead, D.R. Guichard, The number of maximal independent sets in a connected graph, Discrete Math. 68 (1988) 211--220.

\bibitem{HT93} M. Hujter, Z. Tuza, The number of maximal independent sets in triangle-free graphs, SIAM J. Discrete Math. 6 (1993) 284--288.

\bibitem{KGD08} K.M. Koh, C.Y. Goh, F.M. Dong, The maximum number of maximal independent sets in unicyclic connected graphs, Discrete Math. 308 (2008)
3761--3769.

\bibitem{L94} J. Liu, Constraints on the number of maximal independent sets in graphs, J. Graph Theory 18 (1994) 195--204.

\bibitem{W86}
H.S. Wilf, The number of maximal independent sets in a tree, SIAM J. Algebr. Discrete Methods 7 (1986) 125--130.

\bibitem{S88} B.E. Sagan, A note on independent sets in trees, SIAM J. Discrete Math. 1 (1988) 105--108.

\bibitem{CJ97} G.J. Chang and M.J. Jou. The number of maximal independent sets in connected triangle-free graphs. Discrete Math. 197 (1999) 169--178.

\bibitem{TM18} D.S.Taletskii and D.S.Malyshev. Trees without twin-leaves with the smallest number of
maximal independent sets. Diskret. Mat. 30:4 (2018) 115--134.

\bibitem{CW24} S.Cambie, S.Wagner. The Minimum Number of Maximal Independent Sets in Twin-Free Graphs. The Electronic Journal of Combinatorics. 31 (2024). P4.71.

\bibitem{T23}  D.S.Taletskii On Trees with a Given Diameter and the Extremal Number of Distance-$k$ Independent Sets.  Journal of Applied and Industrial Mathematics. 17:3 (2023) 664--677.

\bibitem{FPSV13} A.Frendrup, A.S.Pedersen, A.A.Sapozhenko, P.D.Vestergaard, “Merrifield–Simmons
index and minimum number of independent sets in short trees”, Ars Combin., 111 (2013).
85--95.

\bibitem{T21} D.Taletskii. Trees of Diameter 6 and 7 with Minimum Number of Independent Sets. Math. Notes. 109 (2021) 280--291.

\bibitem{E13} R.Euler, P.Oleksik, Z.Skupień, Counting Maximal Distance-Independent Sets in Grid Graphs. Discussiones Mathematicae Graph Theory. 33:3 (2013) 531--557.

\bibitem{BBMB24} D.Bouhata, S.Bouam, H.Moumen, B.Benreguia,  C.Arar. Self-stabilizing algorithms for computing maximal distance-2 independent sets and minimal dominating sets in networks. Ingénierie des Systèmes d'Information. 29(2) (2024) 581--590.

\bibitem{EGM14} H.Eto, F.Guo, E.Miyano. Distance-$d$ independent set problems for bipartite and chordal graphs. Journal of Combinatorial Optimization. 27:1 (2014) 88--99.
\end{thebibliography}
\end{document}